\documentclass[10pt, leqno]{article}

\usepackage[utf8x]{inputenc}
\usepackage{amsfonts}
\usepackage{amsmath}
\usepackage{amssymb}
\usepackage{amsthm}
\usepackage{mathrsfs}
\usepackage{latexsym}
\usepackage{graphicx}
\usepackage{bm}
\usepackage{inputenc}
\usepackage{enumerate}
\usepackage{enumitem}
\usepackage{hyperref}
\usepackage{graphicx}

\swapnumbers

\newtheorem{Lemma}{Lemma}[section]
\newtheorem{Corollary}[Lemma]{Corollary}
\newtheorem{Theorem}[Lemma]{Theorem}

\theoremstyle{definition}
\newtheorem{Miniremark}[Lemma]{}
\newtheorem{Definition}[Lemma]{Definition}

\theoremstyle{remark}
\newtheorem{Remark}[Lemma]{Remark}

\newtheoremstyle{proof*}
{3pt}
{3pt}
{\rmfamily}
{}
{\bfseries}
{.}
{.5em}
{\thmnote{#3}}
\theoremstyle{proof*}
\newtheorem*{proof*}{}

\DeclareMathOperator{\restrict}{\llcorner}

\DeclareMathOperator{\Tan}{Tan}     
\DeclareMathOperator{\spt}{spt}     
\DeclareMathOperator{\im}{im}       
\DeclareMathOperator{\Lip}{Lip}     
  
\DeclareMathOperator{\dmn}{dmn}     
\DeclareMathOperator{\Nor}{Nor}
     
\DeclareMathOperator{\Der}{D}       
       
\DeclareMathOperator{\ap}{ap}  

\DeclareMathOperator{\trace}{trace}

\DeclareMathOperator{\Dis}{Dis}
\DeclareMathOperator{\discr}{discr}

\newcommand{\Real}[1]{ \mathbf{R}^{#1}}
\newcommand{\Haus}[1]{ \mathscr{H}^{#1} }
\newcommand{\Leb}[1]{ \mathscr{L}^{#1} }
\newcommand{\rect}[1]{(\mathscr{H}^{#1},#1)}
\newcommand{\Hdensity}[3]{\bm{\Theta}^{#1}(\mathscr{H}^{#1}\restrict \,#2,#3 )}

\title{Normal bundle and Almgren's geometric inequality for singular varieties of bounded mean curvature}
\author{Mario Santilli}

\begin{document}
	\maketitle
	
	\begin{abstract}
		In this paper we deal with singular varieties of bounded mean curvature in the viscosity sense. They contain all varifolds of bounded generalized mean curvature. In the first part we investigate the second-order properties of these varieties, obtaining results that are new also in the varifold's setting. In particular we prove that the generalized normal bundle of these varieties satisfies a natural Lusin (N) condition, which allows to extend the classical Coarea formula for the Gauss map of smooth varieties, and to introduce for all integral varifolds of bounded mean curvature a natural definition of second fundamental form, whose trace equals the generalized varifold mean curvature. In the second part, we use this machinery to extend a sharp geometric inequality of Almgren to all compact varieties of bounded mean curvature in the viscosity sense and we characterize the equality case. As a consequence we formulate sufficient conditions to conclude that the area-blow-up set is empty for sequences of varifolds whose first variation is controlled.
	\end{abstract}

\paragraph{\small MSC-classes 2010.}{\small 49Q20, 49Q10, 53A07, 53C24, 35D40.}
\paragraph{\small Keywords.}{\small Bounded mean curvature, varifolds, generalized second fundamental form, generalized Gauss map, Almgren sphere theorem, area blow-up set.}

\section{Introduction}

\paragraph*{General setting.} In this paper we deal with the following class of singular varieties.

\begin{Definition}\label{definition of (m,h)}(see \cite[2.1]{MR3466806}\footnote{This definition is equivalent to \cite[2.1]{MR3466806} by \cite[8.1]{MR3466806}.})
	Suppose $ 1 \leq m < n $ are integers, $ \Omega $ is an open subset of $ \Real{n} $, $ \Gamma $ is relatively closed in $ \Omega $ and $ h \geq 0 $. We say that $ \Gamma $ is an \emph{$ (m,h) $ subset of $ \Omega $} provided it has the following property: if $ x \in \Gamma $ and $ f $ is a $ \mathscr{C}^{2} $  function in a neighbourhood of $ x $ such that $ f|\Gamma $ has a local maximum at $ x $ and $ \nabla f(x) \neq 0 $, then
	\begin{equation*}
	\trace_{m}\Der^{2}f(x) \leq h |\nabla f(x)|,
	\end{equation*}
	where $ \trace_{m} \Der^{2}f(x) $ is the sum of the lowest $ m $ eigenvalues of $ \Der^{2}f(x) $.
\end{Definition}
The $ (m,h) $ sets can be roughly described as "varieties with mean curvature bounded by $ h $ in the viscosity sense". They were introduced by Brian White in \cite{MR3466806} to study the area-blow-up of sequences of submanifolds (or varifolds) and they contain all $ m $ dimensional varifolds $ V $ such that $ \|\delta V \| \leq h \|V\|$, see \cite[2.8]{MR3466806}\footnote{In this paper we adopt the terminology in \cite[Appendix C]{MR855173} for varifolds; in particular note that the variation function $ \mathbf{h}(V, \cdot) $ (i.e.\ generalized mean curvature of $ V $) differs from the one adopted in Allard's paper \cite[4.2]{MR0307015} by a sign.}. Similar notions have been considered in the theory of viscosity solutions of PDE's; see \cite{MR1190161}, \cite{MR3695374} and \cite{MR3823884}.

In \cite{Santilli2019} we have systematically investigated a notion of curvature for arbitrary closed sets. The first goal of the present paper is to employ this machinery to analyze the pointwise curvature properties of $(m,h)$ sets. This is in analogy with the study of the second-order pointwise differentiability of viscosity solutions of PDE's, see \cite{MR995142}. Our investigation starts with the following definition (see also \cite{MR534512} and \cite{MR2031455}). If $ A \subseteq \Real{n} $ is closed we define the \emph{generalized unit normal bundle of $ A $} as
\begin{equation*}
N(A) = (A \times \Real{n}) \cap \{ (a,u) : |u|=1, \;  \bm{\delta}_{A}(a+su)=s \; \textrm{for some $ s > 0 $}\}
\end{equation*}
(here $ \bm{\delta}_{A} $ is the distance function from $ A $). Notice that $N(A)$ is the natural generalization to our geometric setting of the second order super-differential of a function considered in \cite{MR995142}. The set $ N(A) $ is always a countably $n-1$ rectifiable subset of $ \Real{n} \times \Real{n}$ (in the sense of \cite[3.2.14]{MR0257325}) and an appropriate notion of second fundamental form 
\begin{equation*}
 Q_{A}(a,u): T_{A}(a,u) \times T_{A}(a,u) \rightarrow \Real{},
\end{equation*}
where $T_{A}(a,u) $ is a linear subspace of $ \Real{n} $, exists at $ \Haus{n-1} $ almost all $ (a,u) \in N(A) $ (see \ref{curvature of arbitrary closed sets 3}). For an arbitrary closed set $ A $ the dimension of $T_{A}$ may vary from point to point. One of the main result of the paper (see \ref{Lusin property for (m,h) sets}) shows that if $ A $ is an $ (m,h) $ subset of $ \Real{n} $ then the normal bundle $N(A)$ satisfies the following remarkable Lusin (N) condition, provided that $A$ is a countable union of sets of finite $ \Haus{m} $ measure. 

\begin{Definition}\label{Lusin Property intro}
	Suppose $ A \subseteq \Real{n} $ is a closed set, $ \Omega \subseteq \Real{n} $ is an open set and $ 1 \leq m < n $ is an integer. We say that $ N(A) $ satisfies the \textit{$ m $ dimensional Lusin (N) condition in $ \Omega $} if and only if the following property holds:
	\begin{equation*}
	\Haus{n-1}(N(A)\cap \{(a,u) : a \in Z  \})=0
	\end{equation*}
	for every $Z \subseteq  A \cap \Omega$ with $\Haus{m}(A^{(m)} \cap Z) =0 $. Here $A^{(m)} $ is the set of points where $ A $ can be touched by a ball from $ n-m $ linearly independent directions, (see \ref{curvature of arbitrary closed sets 4}).
\end{Definition}

It follows from a recent result of Schneider \cite{MR3272763} that a typical (in the sense of Baire category) compact convex hypersurface in $ \Real{n} $ is $ \mathcal{C}^{1} $ but does not possess the $n-1$ dimensional Lusin (N) condition. The validity of this condition (which is new also in the varifold case) is a consequence of the weak maximum principle, which is the defining property of $(m,h)$ sets. This condition has deep consequences on the curvature properties of these varieties. For example it implies that the first $ m $ principal curvatures of an $(m,h)$ set  are finite; this is in sharp contrast with the typical behavior of a convex surface; see \cite{MR3272763} and \cite[6.3]{Santilli2019}.

This good curvature-behavior allows to extend the Coarea formula for the generalized Gauss map. If $ M $ is an $ m $ dimensional $ \mathcal{C}^{2} $ submanifold of $ \Real{n} $ without boundary, $ N(M) $ is the unit normal bundle and $ Q_{M} $ is the second fundamental form then the area of the generalized Gauss map of $ M $ can be expressed in terms of the curvature of $ M $ in the following way: if $B $ is an $ \Haus{n-1} $ measurable subset of $ N(M) $ then
	\begin{flalign}\label{intro: coarea formula smooth}
& \int_{\mathbf{S}^{n-1}} \Haus{0}\{ a: (a,u)\in B  \}\,d\Haus{n-1}u \notag \\
& \qquad \qquad \qquad   = \int_{M}\int_{\{ \eta : (z, \eta)\in B  \} } | \discr Q_{M}(z,\zeta)| d\Haus{n-m-1}(\zeta) \, d\Haus{m}(z),
\end{flalign}
where $ \discr Q_{M}(z, \zeta) $ is the discriminant of the symmetric bilinear form $ Q_{M}(z, \zeta) $, see \cite[1.7.10]{MR0257325}. Smoothness of $ M $ readily reduces the proof of this result to an application of classical Coarea formula. From a slightly different point of view we could say that the smoothness of $ M $ readily implies the Lusin (N) condition, which in turn implies the validity of the Coarea formula. For our singular varieties we may use the Lusin (N) condition to obtain such a formula following the same argument. Summarizing the results mentioned so far we state the first main result of the paper.

\begin{Theorem}[Coarea formula for the spherical image map of $(m,h)$ sets]\label{intro: area formula}
	Suppose $ 1 \leq m \leq n-1 $, $ 0 \leq h < \infty $, $\Gamma$ is an $ (m,h) $ subset of $ \Real{n} $ that is a countable union of sets with finite $ \Haus{m} $ measure. Then $ N(\Gamma) $ satisfies the $ m $ dimensional Lusin (N) condition and
	\begin{flalign*}
	& \int_{\mathbf{S}^{n-1}} \Haus{0}\{ a: (a,u)\in B  \}\,d\Haus{n-1}u \\
	& \qquad \qquad \qquad   = \int_{\Gamma}\int_{\{ \eta : (z, \eta)\in B  \} } | \discr Q_{\Gamma}| d\Haus{n-m-1} d\Haus{m}z.
	\end{flalign*}
	whenever $ B \subseteq N(\Gamma) $ is $ \Haus{n-1} $ measurable. Moreover,
	\begin{equation*}
	\dim T_{\Gamma}(z,\eta) = m,  \qquad	\trace Q_{\Gamma}(z, \eta) \leq h
	\end{equation*}
	for $ \Haus{n-1} $ a.e.\ $ (z, \eta) \in N(\Gamma) $.
	\end{Theorem}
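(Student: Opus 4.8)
The Lusin (N) condition is precisely \ref{Lusin property for (m,h) sets}, so it may be taken for granted; the plan for the remaining assertions is to deduce them from it together with the structure theory of $N(\Gamma)$ recalled in \ref{curvature of arbitrary closed sets 3}, \ref{curvature of arbitrary closed sets 4}, following the path of the smooth model \eqref{intro: coarea formula smooth}. Throughout, $\bm{p},\bm{q}\colon N(\Gamma)\to\Real{n}$ denote the restrictions to $N(\Gamma)$ of the two cartesian projections, so that $\bm{q}$ maps into $\mathbf{S}^{n-1}$; recall that $N(\Gamma)$ is countably $\rect{n-1}$ rectifiable and that at $\Haus{n-1}$ a.e.\ $(z,\eta)$ the approximate tangent plane of $N(\Gamma)$, the subspace $T_\Gamma(z,\eta)$ and the symmetric bilinear form $Q_\Gamma(z,\eta)$ exist, the tangent plane being spanned by an orthogonal system $(1+\kappa_i^2)^{-1/2}(\tau_i,\kappa_i\tau_i)$, $i=1,\dots,\dim T_\Gamma(z,\eta)$, together with $n-1-\dim T_\Gamma(z,\eta)$ unit vectors $(0,\zeta)$ having $\zeta\perp\eta$ and $\zeta\perp T_\Gamma(z,\eta)$; here $\tau_1,\tau_2,\dots$ is an orthonormal basis of $T_\Gamma(z,\eta)$ diagonalising $Q_\Gamma(z,\eta)$ with eigenvalues $\kappa_1,\kappa_2,\dots$.

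First I would reduce everything to the part of $N(\Gamma)$ lying over $\Gamma^{(m)}$ and pin down $\dim T_\Gamma$. Applying the Lusin (N) condition with $Z=(\Gamma\cap\Real{n})\without\Gamma^{(m)}$, for which $\Haus{m}(\Gamma^{(m)}\cap Z)=\Haus{m}(\varnothing)=0$, gives $\Haus{n-1}(N(\Gamma)\cap\{(z,\eta):z\notin\Gamma^{(m)}\})=0$; indeed $\Haus{n-1}(N(\Gamma)\cap\{(z,\eta):z\in Z'\})=0$ for every $Z'\subseteq\Gamma$ with $\Haus{m}(\Gamma^{(m)}\cap Z')=0$, so $\bm{p}$ pushes $\Haus{n-1}\restrict N(\Gamma)$ forward to a measure absolutely continuous with respect to $\Haus{m}\restrict\Gamma^{(m)}$. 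If $(z,\eta)\in N(\Gamma)$ with $z\in\Gamma^{(m)}$, and $z$ is touched by balls from linearly independent unit directions $u_1,\dots,u_{n-m}$, then each $v\in T_\Gamma(z,\eta)\subseteq\Tan(\Gamma,z)$ satisfies $v\cdot u_j\le0$ (comparing points $z'\in\Gamma$ near $z$ with the ball missing $\Gamma$ centred at $z+s_ju_j$); since $T_\Gamma(z,\eta)$ is a linear subspace this forces $T_\Gamma(z,\eta)\subseteq\{u_1,\dots,u_{n-m}\}^{\perp}$, hence $\dim T_\Gamma(z,\eta)\le m$. For the reverse inequality, $\Gamma^{(m)}$ is countably $m$ rectifiable (recalled in \ref{curvature of arbitrary closed sets 4}), so at $\Haus{m}$ a.e.\ of its points the approximate tangent plane has dimension $m$, and by the second-order differentiability of $N(\Gamma)$ it equals $T_\Gamma(z,\eta)$ for $\Haus{n-1}$ a.e.\ $\eta$ in the fibre over $z$; thus $\dim T_\Gamma(z,\eta)=m$ for $\Haus{n-1}$ a.e.\ $(z,\eta)\in N(\Gamma)$, and $\bm{p}^{-1}(z)$ is $\rect{n-m-1}$ rectifiable for $\Haus{m}$ a.e.\ $z$.

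Next I would deduce the Coarea formula exactly as in the smooth case. On the full-measure set $\{\dim T_\Gamma=m\}$, computing the singular values of $\bm{q}$ and $\bm{p}$ on the tangent plane above shows that the $(n-1)$ dimensional Jacobian of $\bm{q}$ equals $\prod_{i=1}^{m}|\kappa_i|(1+\kappa_i^2)^{-1/2}$ and the $m$ dimensional coarea factor of $\bm{p}$ equals $\prod_{i=1}^{m}(1+\kappa_i^2)^{-1/2}$, so that the Jacobian of $\bm{q}$ equals $|\discr Q_\Gamma(z,\eta)|$ times the coarea factor of $\bm{p}$ (both sides vanishing where $\bm{q}$ degenerates). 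For $\Haus{n-1}$ measurable $B\subseteq N(\Gamma)$ the area formula for the Lipschitz map $\bm{q}$ on the rectifiable set $N(\Gamma)$ gives
\begin{equation*}
\int_{\mathbf{S}^{n-1}}\Haus{0}\{z:(z,\eta)\in B\}\,d\Haus{n-1}\eta=\int_{B}\bm{J}_{n-1}\bm{q}\,d\Haus{n-1},
\end{equation*}
and inserting the Jacobian identity and applying the coarea formula for $\bm{p}$ (legitimate by the previous step and the absolute continuity noted above) yields
\begin{equation*}
\int_{B}\bm{J}_{n-1}\bm{q}\,d\Haus{n-1}=\int_{\Gamma}\int_{\{\eta:(z,\eta)\in B\}}|\discr Q_\Gamma|\,d\Haus{n-m-1}\,d\Haus{m}z,
\end{equation*}
the asserted identity.

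Finally, for the trace bound — the pointwise counterpart of the viscosity inequality of Definition \ref{definition of (m,h)} — the plan is to test that inequality against $\mathscr{C}^{2}$ functions fitted to the second-order behaviour of $\Gamma$ at $(z,\eta)$. Using $z\in\Gamma^{(m)}$ and the second-order differentiability of $N(\Gamma)$, at $\Haus{n-1}$ a.e.\ $(z,\eta)$ one would have, for every $\varepsilon>0$ and all $x\in\Gamma$ near $z$,
\begin{equation*}
(x-z)\cdot\eta\le-\tfrac12 Q_\Gamma(z,\eta)\bigl(\pi(x-z),\pi(x-z)\bigr)+\varepsilon\,|\pi(x-z)|^{2},\qquad |(x-z)-\pi(x-z)|=\mathrm{o}(|x-z|),
\end{equation*}
with $\pi$ the orthogonal projection onto $T_\Gamma(z,\eta)$; then for fixed $\varepsilon>0$ and $c$ large one tests with
\begin{equation*}
f(x)=(x-z)\cdot\eta+\tfrac12 Q_\Gamma(z,\eta)\bigl(\pi(x-z),\pi(x-z)\bigr)+\tfrac{c}{2}\,|(x-z)-\pi(x-z)|^{2}-\tfrac{\varepsilon}{2}\,|\pi(x-z)|^{2}.
\end{equation*}
The inequality and the transversal tangency force $f|\Gamma$ to have a strict local maximum at $z$, while $\nabla f(z)=\eta\ne0$ and $\Der^{2}f(z)$ is block diagonal for $T_\Gamma(z,\eta)\oplus T_\Gamma(z,\eta)^{\perp}$, equal to $Q_\Gamma(z,\eta)-\varepsilon\,\mathrm{Id}$ on $T_\Gamma(z,\eta)$ and to $c\,\mathrm{Id}$ on $T_\Gamma(z,\eta)^{\perp}$; since $\dim T_\Gamma(z,\eta)=m$, for $c$ large the $m$ lowest eigenvalues of $\Der^{2}f(z)$ are $\kappa_1-\varepsilon,\dots,\kappa_m-\varepsilon$, so $\trace_{m}\Der^{2}f(z)=\trace Q_\Gamma(z,\eta)-m\varepsilon$, and Definition \ref{definition of (m,h)} gives $\trace Q_\Gamma(z,\eta)-m\varepsilon\le h|\nabla f(z)|=h$; letting $\varepsilon\to0$ finishes the argument. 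I expect the main obstacle to be precisely this last step: establishing, at $\Haus{n-1}$ a.e.\ point of $N(\Gamma)$, both the one-sided second-order expansion of $\Gamma$ from the $\eta$-side and the transversal tangency, which together are what keeps the maximum of $f|\Gamma$ at $z$ while permitting the transversal Hessian of $f$ to be pushed arbitrarily high; this rests on the second-order differentiability theory of $N(\Gamma)$ and on the rectifiability of $\Gamma^{(m)}$, and it is here — and in the proof of the Lusin (N) condition itself — that the weak maximum principle defining $(m,h)$ sets is used.
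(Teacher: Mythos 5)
Your reduction of the coarea identity to the Lusin (N) condition follows the paper's own route: restrict to $N(\Gamma)|\Gamma^{(m)}$, use the known tangential structure of $N(\Gamma)$ to identify the Jacobian of $\mathbf{q}$ with $|\discr Q_\Gamma|$ times the coarea factor of $\mathbf{p}$, and slice; this is exactly what the paper gets by citing \cite[4.11(3), 5.4]{Santilli2019} in \ref{area formula for the gauss map}, and your dimension argument matches \ref{Lusin condition and dimension of tangent space}. Note, however, that the Lusin condition is itself part of the statement being proved, and you take it entirely from \ref{Lusin property for (m,h) sets}; the paper proves it (and the trace bound) there by a level-set argument that your proposal does not reproduce.

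The genuine gap is in your proof of $\trace Q_{\Gamma}(z,\eta)\le h$. Your viscosity test needs, at $\Haus{n-1}$ a.e.\ $(z,\eta)\in N(\Gamma)$, two \emph{pointwise} facts valid for \emph{all} $x\in\Gamma$ near $z$: the one-sided second-order expansion governed by the exact form $Q_\Gamma(z,\eta)$, and the transversal tangency $|(x-z)-\pi(x-z)|=\mathrm{o}(|x-z|)$. Neither follows from the sources you invoke: rectifiability of $\Gamma^{(m)}$ and the relation $\ap\mathbf{b}_{\Gamma^{(m)}}\bullet\eta=-Q_\Gamma$ (\ref{curvature of arbitrary closed sets 5}) are approximate, density-one statements, and a single nearby point of $\Gamma$ outside the good set destroys the local maximum of $f|\Gamma$ that Definition \ref{definition of (m,h)} requires; Remark \ref{curvature of arbitrary closed sets 5 remark} is precisely a warning that such pointwise second-order control can fail over base null sets carrying positive $\Haus{n-1}$ measure in $N(A)$. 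There is also a quantitative slip: from your stated expansion and tangency one only gets $f(x)\le \tfrac{\varepsilon}{2}|\pi(x-z)|^{2}+\tfrac{c}{2}|(x-z)-\pi(x-z)|^{2}\ge 0$, so the claimed maximum at $z$ does not follow as written (this is repairable by enlarging the subtracted $\varepsilon$-term, but only if the unproven expansion is available). The paper circumvents all of this by never testing at points of $\Gamma$ directly: for a.e.\ $r$ the level set $S(A,r)$ is pointwise twice differentiable at a.e.\ of its points (\ref{local structure of level sets}), the translated level-set graph is a \emph{hypersurface} barrier touching $\Gamma$ from the $\eta$-side (Claim 1 of \ref{Lusin property for (m,h) sets}), the generalized barrier principle \ref{weak maximum principle} (built on White's \cite[7.1]{MR3466806}) gives $\sum_{i=1}^{m}\chi_{A,i}(x)\le h$, and the relation $\kappa_{A,i}=\chi_{A,i}(1-r\chi_{A,i})^{-1}$ together with a limit $r\to 0$ transfers the bound to $\trace Q_\Gamma\le h$ on $N(\Gamma)$. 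Some substitute for this level-set (or other hypersurface-barrier) mechanism is needed; as it stands, the crux of the trace estimate is assumed rather than proved.
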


 
Theorem \ref{intro: area formula} clearly shows that $ Q_{\Gamma} $ and $ \trace Q_{\Gamma} $ naturally describe key geometric properties of general $(m,h)$ sets, thus providing natural notions of second fundamental form and mean curvature for this class of varieties. In case of \emph{integral varifolds} we also prove the agreement of the trace of the second fundamental form with the generalized mean curvature. The restriction to \emph{integral} varifolds is technical and only due to the fact that we rely on the locality theorem of Sch\"atzle \cite[4.2]{MR2472179}, which is currently not available for non-integral varifolds.

\begin{Corollary}[Second fundamental for integral varifolds of bounded mean curvature]\label{intro: locality}
Suppose $ 1 \leq m \leq n-1 $, $ V\in \mathbf{V}_{m}(\Real{n}) $ is an integral varifold such that $\|\delta V\| \leq c \|V\| $ for some $ 0 \leq c < \infty $. Then
\begin{equation*}
\trace Q_{\spt \|V\|}(z, \eta) = \mathbf{h}(V,z) \bullet \eta \quad \textrm{for $ \Haus{n-1} $ a.e.\ $ (z,\eta) \in N(\spt \|V\|) $.}
\end{equation*}
\end{Corollary}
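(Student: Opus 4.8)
The plan is to realize $\Gamma:=\spt\|V\|$ as an $(m,c)$ set to which Theorem~\ref{intro: area formula} applies, to compare $V$ with the multiplicity one varifold carried by $\Gamma^{(m)}$, and to match their mean curvatures via Sch\"atzle's locality theorem. First I would verify the hypotheses of Theorem~\ref{intro: area formula} with $h=c$: the set $\Gamma$ is an $(m,c)$ subset of $\Real{n}$ by \cite[2.8]{MR3466806}, and it is a countable union of sets of finite $\Haus{m}$ measure because the monotonicity formula for $V$ (available since $\|\delta V\|\leq c\|V\|$) together with integrality gives $\bm{\Theta}^{m}(\|V\|,z)\geq 1$ for every $z\in\Gamma$, whence, by the density comparison theorem \cite[2.10.19]{MR0257325}, $\Haus{m}\restrict\Gamma\leq 2^{m}\|V\|$ is locally finite. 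Theorem~\ref{intro: area formula} then yields that $N(\Gamma)$ satisfies the $m$ dimensional Lusin (N) condition, that the Coarea formula holds, and that $\dim T_{\Gamma}(z,\eta)=m$ and $\trace Q_{\Gamma}(z,\eta)\leq c$ for $\Haus{n-1}$ a.e.\ $(z,\eta)\in N(\Gamma)$. Combining the Coarea formula with the Lusin (N) condition, the measure $\Haus{n-1}\restrict N(\Gamma)$ disintegrates along $(z,\eta)\mapsto z$ over $\Haus{m}\restrict\Gamma$, with conditional measures absolutely continuous with respect to $\Haus{n-m-1}$ on the fibre $\{\eta:(z,\eta)\in N(\Gamma)\}$; hence it is enough to prove that $\trace Q_{\Gamma}(z,\eta)=\mathbf{h}(V,z)\bullet\eta$ for $\Haus{m}$ a.e.\ $z\in\Gamma$ and $\Haus{n-m-1}$ a.e.\ $\eta$ in that fibre.

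Next I would introduce the canonical varifold $V_{0}:=\mathbf{v}^{m}(\Gamma^{(m)},1)$, the multiplicity one rectifiable varifold carried by $\Gamma^{(m)}$ (the part of $\Gamma$ on which $T_{\Gamma}$ and $Q_{\Gamma}$ are defined). Using the second-order structure theory of $(m,h)$ sets developed before Theorem~\ref{intro: area formula} — rectifiability of $\Gamma^{(m)}$, the fact that $\Haus{m}(\Gamma\without\Gamma^{(m)})=0$, and the existence of $T_{\Gamma}$ and $Q_{\Gamma}$ — this is a well defined integral $m$ varifold with $\spt\|V_{0}\|=\Gamma$: indeed $\Haus{m}\restrict\Gamma^{(m)}=\Haus{m}\restrict\Gamma$, and $\|V\|(U)>0$ forces $\Haus{m}(\Gamma\cap U)>0$ for every open $U$, since $\|V\|=\theta\,\Haus{m}\restrict\Gamma$ with $\theta$ integer valued and $\theta\geq1$ almost everywhere. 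The key input at this stage is a divergence theorem for $(m,h)$ sets: integrating the tangential divergence of a $\mathcal{C}^{1}_{c}$ vector field along the fibres of the generalized Gauss map and using the pointwise bound $\trace Q_{\Gamma}\leq c$ together with the Coarea formula and the Lusin (N) condition should give $\|\delta V_{0}\|\leq c\|V_{0}\|$ and the identity $\mathbf{h}(V_{0},z)\bullet\eta=\trace Q_{\Gamma}(z,\eta)$ for $\Haus{n-1}$ a.e.\ $(z,\eta)\in N(\Gamma)$. Finally I would record that the approximate tangent plane of $V_{0}$ at $z$ agrees with $T_{\Gamma}(z,\eta)$ for $\Haus{n-1}$ a.e.\ $(z,\eta)$, and that both agree with the approximate tangent plane of $V$ at $z$ for $\Haus{m}$ a.e.\ $z\in\Gamma$, all three being determined by the blow-ups of $\Gamma$ at $z$.

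At this point $V$ and $V_{0}$ are two integral $m$ varifolds with locally finite mass, locally bounded first variation, generalized mean curvature in $L^{\infty}$ hence in $L^{p}_{\loc}$ for every $p<\infty$, common support $\spt\|V\|=\spt\|V_{0}\|=\Gamma$, and approximate tangent planes coinciding $\Haus{m}$ a.e.\ on $\Gamma$. The locality theorem of Sch\"atzle \cite[4.2]{MR2472179} then gives $\mathbf{h}(V,z)=\mathbf{h}(V_{0},z)$ for $\Haus{m}$ a.e.\ $z\in\Gamma$. Combining with the previous step, $\mathbf{h}(V,z)\bullet\eta=\mathbf{h}(V_{0},z)\bullet\eta=\trace Q_{\Gamma}(z,\eta)$ for $\Haus{m}$ a.e.\ $z$ and $\Haus{n-m-1}$ a.e.\ admissible $\eta$, and feeding this back through the disintegration of the first step yields the assertion for $\Haus{n-1}$ a.e.\ $(z,\eta)\in N(\spt\|V\|)$.

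I expect the main obstacle to be the divergence theorem of the second step: identifying the generalized mean curvature of the multiplicity one varifold carried by $\Gamma^{(m)}$ with $\trace Q_{\Gamma}$, i.e.\ proving $\|\delta V_{0}\|\leq c\|V_{0}\|$ and $\mathbf{h}(V_{0},z)\bullet\eta=\trace Q_{\Gamma}(z,\eta)$. This is exactly where the Lusin (N) condition and the Coarea formula of Theorem~\ref{intro: area formula} are indispensable, since they are what makes it possible to integrate curvature against test fields along the Gauss map, while the viscosity maximum principle is needed to exclude a measure-theoretic boundary of $\Gamma^{(m)}$. A secondary, more technical point is to match the precise hypotheses of \cite[4.2]{MR2472179}: the integrality of both varifolds is essential — this is, through Sch\"atzle's theorem, the sole reason the statement is restricted to integral $V$ — and one needs the coincidence not merely of the two varifolds but of their supports and their approximate tangent planes.
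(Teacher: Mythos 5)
The central step of your argument is a genuine gap, not a technicality. Everything hinges on the ``divergence theorem for $(m,h)$ sets'': that the multiplicity one varifold $V_{0}=\mathbf{v}^{m}(\Gamma^{(m)},1)$ satisfies $\|\delta V_{0}\|\leq c\|V_{0}\|$ and $\mathbf{h}(V_{0},z)\bullet\eta=\trace Q_{\Gamma}(z,\eta)$. Nothing in Theorem \ref{intro: area formula} yields such a tangential integration-by-parts identity: the Lusin (N) condition and the coarea formula control the $\Haus{n-1}$ measure of spherical images of subsets of $N(\Gamma)$, but they do not exclude a singular (boundary-type) contribution to $\delta V_{0}$, for instance along the set where the multiplicity of $V$ jumps, nor do they even show that $V_{0}$ has locally bounded first variation. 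Whether the multiplicity one varifold carried by the support of an integral varifold of bounded mean curvature admits a generalized mean curvature is a delicate question that the paper never addresses, and your sketch does not settle it -- you flag it as the main obstacle and only assert that the machinery ``should give'' it. Since your application of Sch\"atzle's theorem \cite[4.2]{MR2472179} compares $V$ with $V_{0}$, the proof does not close without this missing step; also note that the reduction in your first step (from $\Haus{n-1}$ a.e.\ on $N(\Gamma)$ to fibrewise statements and back) needs the coarea formula for $\mathbf{p}|N(\Gamma)$ with an a.e.\ positive Jacobian, not merely an Eilenberg-type inequality, though this is a minor point by comparison.

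The paper's route avoids $V_{0}$ altogether and is the natural repair. Since the stratum $S=A^{(m)}$, $A=\spt\|V\|$, is locally $\rect{m}$ rectifiable of class $2$ by \cite[4.12]{MR4012808}, Sch\"atzle's locality theorem is applied directly to $V$ and the $\mathcal{C}^{2}$ submanifolds that $\Haus{m}$ almost cover $S$ -- varifolds whose mean curvature is classical -- giving $\ap\mathbf{h}_{S}(z)=-\mathbf{h}(V,z)$ for $\Haus{m}$ a.e.\ $z\in S$; no first-variation identity for a new varifold is needed. Then Corollary \ref{relation with ap s.f.f.} (which is where the Lusin (N) condition of Theorem \ref{Lusin property for (m,h) sets} enters, via Theorem \ref{curvature of arbitrary closed sets 5}) turns $\ap\mathbf{b}_{S}(z)\bullet\eta=-Q_{A}(z,\eta)$ into an $\Haus{n-1}$ a.e.\ statement on $N(A)$, and taking traces yields the corollary. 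Your opening step -- local finiteness of $\Haus{m}\restrict\spt\|V\|$ from monotonicity and the density lower bound, so that Theorems \ref{intro: area formula} and \ref{relation with ap s.f.f.} apply -- is correct and matches what the paper implicitly uses; if you replace the construction of $V_{0}$ and the unproved divergence theorem by the locality argument on the class-$2$ cover of the stratum, the argument becomes complete.
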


Combining \ref{intro: area formula} and \ref{intro: locality} we obtain new insights in the study of the curvature properties of varifolds. Besides the classical work on \emph{curvature varifolds} in \cite{MR825628} and \cite{MR1412686}, another recent contribution in this field is the proof of the second-order-rectifiability for varifolds: in \cite{zbMATH06157228} (see also \cite{MR2064971}-\cite{MR2472179}) for \emph{integral} varifolds with locally bounded first variation and in \cite{2019arXiv190702792S} for \emph{rectifiable} varifolds with a uniform lower bound on the density and bounded generalized mean curvature.

The other main contribution of this paper is the extension of Almgren's geometric inequality to compact $(m,h)$ sets. 

\begin{Theorem}\label{intro: sphere theorem}
If $ 1 \leq m \leq n-1 $, $ h > 0 $ and $ \Gamma $ is a non-empty compact $(m,h)$ subset of $ \Real{n} $ then 
\begin{equation*}
\Haus{m}(\Gamma) \geq \Big(\frac{m}{h}\Big)^{m} \Haus{m}(\mathbf{S}^{m}).
\end{equation*}
	
Moreover if the equality holds and $ \Gamma = \spt (\Haus{m}\restrict \Gamma) $ then there exists an $m+1$ dimensional plane $ T $ and $ a \in \Real{n} $ such that
\begin{equation*}
\Gamma = \partial \mathbf{B}(a, m/h) \cap T.
\end{equation*}
\end{Theorem}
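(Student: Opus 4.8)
The plan is to deduce Theorem \ref{intro: sphere theorem} from the Coarea formula in Theorem \ref{intro: area formula} by an integral-geometric argument, in the same spirit as Almgren's original proof for smooth minimal varieties. First I would observe that a compact $(m,h)$ set $\Gamma$ has finite $\Haus{m}$ measure (it is contained in a bounded region, and the weak maximum principle gives the monotonicity-type density bounds that force $\Haus{m}(\Gamma)<\infty$; alternatively this is part of White's theory in \cite{MR3466806}), so Theorem \ref{intro: area formula} applies with $\Gamma$ itself. The key point is that the generalized Gauss (spherical image) map is \emph{surjective} onto $\mathbf{S}^{n-1}$: for every $u\in\mathbf{S}^{n-1}$, by compactness the linear function $x\mapsto x\bullet u$ attains a maximum on $\Gamma$ at some point $a$, and at such a point $\Gamma$ is touched from outside by a halfspace with outer normal $u$, hence by arbitrarily large balls, so $(a,u)\in N(\Gamma)$. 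Taking $B=N(\Gamma)$ in the Coarea formula, the left-hand side is therefore $\geq \Haus{n-1}(\mathbf{S}^{n-1})$.

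\medskip

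Next I would estimate the right-hand side from above using $\trace Q_\Gamma(z,\eta)\leq h$ and $\dim T_\Gamma(z,\eta)=m$, which hold $\Haus{n-1}$-a.e. by Theorem \ref{intro: area formula}. For fixed $z$, the bilinear form $Q_\Gamma(z,\eta)$ acts on the $m$-dimensional space $T_\Gamma(z,\eta)$ and its discriminant is the product of its $m$ eigenvalues $\kappa_1,\dots,\kappa_m$; by the arithmetic–geometric mean inequality applied to $|\kappa_i|$,
\begin{equation*}
|\discr Q_\Gamma(z,\eta)| \;=\; \prod_{i=1}^m |\kappa_i| \;\leq\; \Big(\frac{1}{m}\sum_{i=1}^m |\kappa_i|\Big)^{m}.
\end{equation*}
To make this useful I need $\sum|\kappa_i|$ controlled by $h$, not merely $\sum\kappa_i=\trace Q_\Gamma\leq h$. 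This is where the integration over the normal fiber $\{\eta:(z,\eta)\in N(\Gamma)\}$ enters: for a fixed $z$ the normal directions $\eta$ range over (a subset of) a unit sphere in the $(n-m)$-dimensional normal space, and the eigenvalues of $Q_\Gamma(z,\eta)$ depend on $\eta$; averaging $(\tfrac1m\sum|\kappa_i(\eta)|)^m$ against $\Haus{n-m-1}$ over the normal sphere and using that the positive part of the curvature in direction $\eta$ equals the negative part in direction $-\eta$, one converts the one-sided trace bound into the needed two-sided control. Carrying out this fiber computation so that it reproduces exactly the constant $\Haus{m}(\mathbf{S}^m)$ — comparing with the model case $\Gamma=\partial\mathbf{B}(a,m/h)\cap T$ where every principal curvature equals $h/m$ — is the technical crux; I expect to organize it as: (i) a pointwise inequality on the integrand of the Coarea formula, (ii) an identity $\int_{\mathbf{S}^{n-1}}\dots\,d\Haus{n-1}u = \Haus{n-1}(\mathbf{S}^{n-1})$ for the model, and (iii) a comparison reducing the general case to the model via the surjectivity above. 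Combining the lower bound $\Haus{n-1}(\mathbf{S}^{n-1})$ on the left with the upper bound $(h/m)^m\,\Haus{m}(\Gamma)\cdot(\text{fiber constant})$ on the right yields $\Haus{m}(\Gamma)\geq (m/h)^m\,\Haus{m}(\mathbf{S}^m)$ after a dimensional identity relating $\Haus{n-1}(\mathbf{S}^{n-1})$, $\Haus{m}(\mathbf{S}^m)$ and the measure of the normal spheres.

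\medskip

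For the rigidity statement, suppose equality holds and $\Gamma=\spt(\Haus{m}\restrict\Gamma)$. Then every inequality used above must be an equality $\Haus{n-1}$-a.e.\ on $N(\Gamma)$: the arithmetic–geometric mean step forces all the $|\kappa_i(z,\eta)|$ to be equal, the trace bound $\trace Q_\Gamma\leq h$ must be saturated, and there can be no "wasted" normal directions, i.e.\ the fiber $\{\eta:(z,\eta)\in N(\Gamma)\}$ must be essentially a full unit sphere of the expected dimension $n-m-1$ for a.e.\ $z$, and moreover $\Gamma^{(m)}$ must essentially be all of $\Gamma$. From "all principal curvatures equal and summing to $h$" one gets that at a.e.\ point $\Gamma$ is umbilic with every principal curvature $h/m$; integrating this infinitesimal rigidity — using the $\mathcal{C}^1$-rectifiable structure together with the fact that, by the Lusin (N) condition and the saturation of the Coarea formula, the curvature is genuinely pointwise controlled rather than merely a.e.\ defined — should force the $m$-dimensional tangent planes to fit together into a round $m$-sphere of radius $m/h$ lying in an affine $(m+1)$-plane. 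Concretely I would show that $\Gamma$ is contained in a sphere $\partial\mathbf{B}(a,m/h)$ (e.g.\ by showing the function $x\mapsto|x-a|^2$ is constant on $\Gamma$ for a suitable center $a$, obtained as the common center of the osculating spheres), that it is contained in an $(m+1)$-plane $T$ (because the mean curvature vector must be normal to a fixed $m+1$ dimensional distribution), and finally that it is \emph{all} of $\partial\mathbf{B}(a,m/h)\cap T$ by the equality case in the volume bound applied to the (open and closed, by the rigidity) subset $\Gamma\subseteq\partial\mathbf{B}(a,m/h)\cap T$ together with the hypothesis $\Gamma=\spt(\Haus{m}\restrict\Gamma)$. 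The main obstacle throughout is the rigidity: turning a.e.\ infinitesimal umbilicity of a merely rectifiable set into a global smooth conclusion, which is exactly where the strength of the Lusin (N) condition and the exact (not just inequality) Coarea formula is needed, presumably in combination with a maximum-principle / connectedness argument to rule out the set being a proper subset or lying in a higher-dimensional plane.
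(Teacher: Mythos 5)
Your plan for the inequality breaks down at its central step. You take $B=N(\Gamma)$ in the coarea formula and then try to bound $|\discr Q_{\Gamma}|$ using only $\trace Q_{\Gamma}\leq h$, hoping to convert this one-sided bound into control of $\sum_i|\kappa_i|$ by averaging over the normal fiber and ``using that the positive part of the curvature in direction $\eta$ equals the negative part in direction $-\eta$''. This cannot work: the fiber $N(\Gamma,a)$ is in general not symmetric under $\eta\mapsto-\eta$ (touchability by a ball from one side says nothing about the other side), and even in the symmetric smooth case a two-sided trace bound does not control the discriminant --- a saddle with principal curvatures $K,-K$ has zero trace but $|\discr|=K^{2}$ arbitrarily large, so for an $(m,h)$ set with small $h$ (e.g.\ pieces of minimal surfaces) the fiber integral of $|\discr Q_{\Gamma}|$ over all of $N(\Gamma)$ is not bounded by $(h/m)^{m}$ times area. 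The paper avoids this precisely by \emph{not} taking $B=N(\Gamma)$: it keeps $B$ equal to the contact set of support-hyperplane directions (the set you already used for surjectivity), where $\Gamma$ lies on one side of the touching hyperplane and hence $\ap\mathbf{b}_{\Gamma^{(m)}}(a)\bullet\eta=-Q_{\Gamma}(a,\eta)$ is \emph{nonnegative definite}; only then does the arithmetic--geometric mean inequality combined with the one-sided bound (via the projection $g(a)$ of the approximate mean curvature onto the contact cone, with $|g(a)|\leq m$, and the half-space integral identity defining $\gamma(n,m)$) give the sharp constant, which is then identified by the model computation on $T\cap\mathbf{S}^{n-1}$ exactly as you propose in (ii). A smaller point: your claim that compactness plus the weak maximum principle forces $\Haus{m}(\Gamma)<\infty$ is false (a closed ball is an $(n-1,n-1)$ set of infinite $\Haus{n-1}$ measure); the correct reduction is simply that the inequality is trivial when $\Haus{m}(\Gamma)=\infty$.

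The rigidity part of your proposal is only a statement of intent. The genuine difficulty --- upgrading a.e.\ umbilicity of a merely closed, countably rectifiable set to the exact conclusion --- is resolved in the paper by a specific chain you do not supply: equality in the coarea chain gives $|\ap\mathbf{h}_{\Gamma}|=m$ and umbilicity a.e., which forces the convex hull $A$ of $\Gamma$ to have dimension $m+1$ with $\Gamma\subseteq\partial A$; then White's compactness/blow-up result together with the \emph{strong} barrier principle shows every tangent cone of $\Gamma$ is a supporting hyperplane, whence $\Gamma=\partial A$ is a $\mathcal{C}^{1}$ convex hypersurface of the $(m+1)$-plane; a further barrier argument applied to the level sets $S(A,r)$ of the distance function, combined with the bi-Lipschitz estimate for the nearest-point projection, upgrades this to $\mathcal{C}^{1,1}$, i.e.\ a locally Lipschitz unit normal $\eta$; finally the a.e.\ identity $\Der\eta(x)(u)\bullet v=u\bullet v$ integrates to $\eta(z)=z-a$, giving $\Gamma=\partial\mathbf{B}(a,m/h)\cap T$. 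Without some substitute for the convex-hull/strong-barrier step and the $\mathcal{C}^{1,1}$ upgrade, ``integrating the infinitesimal rigidity'' and the ``open and closed'' argument you sketch have no set-theoretic regularity to act on, so the rigidity claim remains unproved in your outline.
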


If $ \Gamma $ is the support of a rectifiable varifold $ V $ with a uniform lower bound on the density such that $ \| \delta V \| \leq m \| V \| $ then this theorem is contained in \cite{MR855173}. Our proof generalizes Almgren's method to $(m,h)$ sets and combines it with the novel facts stated in \ref{intro: area formula}, which are new also in the varifold's setting. As a byproduct our proof somewhat simplifies several steps of Almgren's original argument for varifolds. We now briefly describe the main steps of the proof. Firstly we can suitably rescale $ \Gamma $ to have $ m = h $. For the inequality case we use compactness of $ \Gamma $ to see that for each $ \eta \in \mathbf{S}^{n-1} $ there exists an $ (n-1)$ dimensional plane $ \pi $ perpendicular to $ \eta $ such that $ \Gamma $ lies on one side of $ \pi $ and touches $ \pi $ at least in one point. This can be precisely stated saying that the projection onto $ \mathbf{S}^{n-1} $ of the contact set 
\begin{equation*}
	C = (\Gamma \times \mathbf{S}^{n-1}) \cap  \{ (z, \eta) : (w-z)\bullet \eta \leq 0 \; \textrm{for every $ w \in \Gamma $} \} \subseteq N(\Gamma)
\end{equation*}
equals $ \mathbf{S}^{n-1} $. Then the estimate $ \trace Q_{\Gamma} \leq m $ in \ref{intro: area formula} and the more elementary fact that $ Q_{\Gamma} $ has a sign when restricted on $ C $, allows to obtain $ \Haus{m}(\Gamma)\geq \Haus{m}(\mathbf{S}^{m}) $. This crucial quantitative estimate is obtained working directly on the projection of the contact set $ C $ of $ \Gamma $, combining the Coarea formula \ref{intro: area formula} and the Barrier principle of White \cite[7.1]{MR3466806}, and with no structural or smoothness assumptions at the touching points. This argument originates from the approach to Almgren's theorem developed in \cite{Men12} and it is somewhat more direct than Almgren's method, which instead uses the convex hull of $ \Gamma $. Moving to the proof of the equality case, we first combine \cite[3.2]{MR3466806} with the Strong Barrier principle in \cite[7.3]{MR3466806} to conclude that at each point of $ \Gamma $ its tangent cone is the unique supporting hyperplane of the convex hull of $ \Gamma $. This implies that $ \Gamma $ actually coincides with the boundary of its convex hull and it is a $ \mathcal{C}^{1} $ hypersurface. At this point, in contrast with the varifold's case, we cannot conclude using Allard's regularity theory, since such a theory has not been extended to $(m,h)$ sets\footnote{However, it is a natural question to understand if Allard regularity theorem can be proved in the more general setting of $(m,h)$ sets. A result pointing to a possible positive answer is contained in \cite{MR3823884}, where $ \mathcal{C}^{2, \alpha} $ regularity has been proved for $(m,0)$ sets that are graphs of continuous functions.}. Therefore to conclude the proof we use an idea that we have learned from \cite{Men12}. We apply the barrier principle \cite[7.1]{MR3466806} in combination with a result of Federer \cite[3.1.23]{MR0257325} to gain some further regularity for $ \Gamma $, namely it is a $ \mathcal{C}^{1,1} $ hypersurface. At this point the conclusion can be easily deduced from a direct computation. 

The sharp geometric inequality for $(m,h)$ sets readily implies sufficient conditions (see \ref{area-blow-up set} and \ref{area-blow-up set stationary}) to conclude that the area-blow-up set of certain sequences of varifolds is empty.

{\small\paragraph*{Acknowledgements.} Most of the work in section \ref{section: Area formula} was carried out when the author was a Phd student in the Geometric Measure Theory group led by Prof.\ Ulrich Menne at Max Planck Institute for Gravitational Physics. The author thanks Prof.\ Ulrich Menne for many conversations on the subject of the present paper and to have kindly made available his unpublished lecture notes \cite{Men12}, where some of the key ideas of the present work originate from.}
\section{Preliminaries}\label{section: preliminaries}

As a general rule, the notation and the terminology used without comments agree with \cite[pp.\ 669--676]{MR0257325}. For varifolds our terminology is based on \cite[Appendix C]{MR855173}. The symbols $ \mathbf{U}(a,r) $ and $ \mathbf{B}(a,r) $ denote the open and closed ball with centre $ a $ and radius $ r $ (\cite[2.8.1]{MR0257325}); $ \mathbf{S}^{m} $ is the $ m $ dimensional unit sphere in $ \Real{m+1} $ (\cite[3.2.13]{MR0257325}); $ \Leb{m} $ and $ \Haus{m} $ are the $ m $ dimensional Lebesgue and Hausdorff measure (\cite[2.10.2]{MR0257325}); $ \mathbf{G}(m,k) $ is the Grassmann manifold of all $ k $ dimensional subspaces in $ \Real{m} $ (\cite[1.6.2]{MR0257325}). Given a measure $ \mu $, we denote by $ \bm{\Theta}^{m}(\mu, \cdot) $ the $ m $ dimensional density of $ \mu $ (\cite[2.10.19]{MR0257325}). Moreover, given a function $ f $, we denote by $ \dmn f $, $ \im f $ and $ \nabla f $ the domain, the image and the gradient of $ f $. The closure and the boundary in $ \Real{n} $ of a set $ A $ are denoted by $ \overline{A} $ and $ \partial A $ and, if $ \lambda > 0 $ and $ x \in \Real{n} $ then $ \lambda(A-x)= \{ \lambda(y-x) : y \in A \} $. The symbols $ \Tan(A,a) $ and $ \Nor(A,a) $ denote the tangent and the normal cone of $ A $ at $ a $ (\cite[3.1.21]{MR0257325}). The symbol $ \bullet $ denotes the standard inner product of $\Real{n}$. If $T$ is a linear subspace of $\Real{n}$, then $T_{\natural} : \Real{n} \rightarrow \Real{n}$ is the orthogonal projection onto $T$ and $T^{\perp} = \Real{n} \cap \{ v : v \bullet u =0 \; \textrm{for $u \in T$} \}$. If $X$ and $Y$ are sets and $ Z \subseteq X \times Y $ we define
\begin{equation*}
Z | S = Z \cap \{ (x,y) : x \in S  \} \quad \textrm{for $ S \subseteq X $,}
\end{equation*}
\begin{equation*}
Z(x) = Y \cap \{ y : (x,y)\in Z   \} \quad \textrm{for $ x \in X $.}
\end{equation*}
The maps $ \mathbf{p}, \mathbf{q} : \Real{n} \times \Real{n} \rightarrow \Real{n} $ are 
\begin{equation*}
\mathbf{p}(x,v)= x, \quad \mathbf{q}(x,v) = v.
\end{equation*}
If $A \subseteq \Real{n}$ and $ m \geq 1 $ is an integer, we say that \textit{$A$ is countably $\rect{m}$ rectifiable of class $2$} if $A$ can be $\Haus{m}$ almost covered by the union of countably many $m$ dimensional submanifolds of class $2$ of $\Real{n}$; we omit the prefix ``countably'' when $\Haus{m}(A)<\infty$. If $X$ and $Y$ are metric spaces and $f : X \rightarrow Y$ is a function such that $f$ and $f^{-1}$ are Lipschitzian functions, then we say that $f$ is a \emph{bi-Lipschitzian homeomorphism}.

\subsection*{Approximate second fundamental form}

In this paper we employ weak notions of second fundamental form and mean curvature that can be naturally associated to each set $ A \subseteq \Real{n} $ at those points $ a \in \Real{n} $ where $ A $ is approximately differentiable of order $ 2 $ in the sense of \cite{San}. In order to keep this preliminary section relatively short we directly refer to \cite[2.7-2.11]{Santilli2019}, where relevant definitions and remarks about the theory developed in \cite{San} are summarized. On the basis of \cite[2.7-2.8]{Santilli2019} we can introduce the following definitions.

\begin{Definition}\label{ap second fundamental form}
	The \emph{approximate second fundamental form of $ A $ at $ a $} is
	\begin{equation*}
	\ap \mathbf{b}_{A}(a) =\ap \Der^{2}A(a)| \ap\Tan(A,a)\times \ap \Tan(A,a)
	\end{equation*}
	and the associated \emph{approximate mean curvature of $ A $ at $ a $} is
	\begin{equation*}
	\ap \mathbf{h}_{A}(a) = \trace \big(\ap \mathbf{b}_{A}(a)\big).
	\end{equation*}
\end{Definition}

If $ A $ is an $ m $ dimensional submanifold of class $ 2 $ then these notions agree with the classical notions from differential geometry, see \cite[2.9]{Santilli2019}.

\subsection*{Curvature for arbitrary closed sets} 

Besides the concept of approximate second fundamental form, in this paper we make use of a more general notion of second fundamental form introduced in \cite{Santilli2019} that can be associated to arbitrary closed sets. The theory of curvature for arbitrary closed sets has been developed in \cite{MR534512}, \cite{MR2031455}, \cite{Santilli2019} and here we summarize those concepts that are relevant for our purpose in the present paper.

Suppose $A$ is a closed subset of $ \Real{n} $.

\begin{Miniremark}\label{curvature of arbitrary closed sets 1}
(cf.\ \cite[2.12, 3.1]{Santilli2019}) The \emph{distance function to $ A $} is denoted by $\bm{\delta}_{A} $ and $S(A,r) = \{ x : \bm{\delta}_{A}(x) = r  \}$. It follows from \cite[2.13]{Santilli2019} that if $ r > 0 $ then $ \Haus{n-1}(S(A,r)\cap K)< \infty $ whenever $ K \subseteq \Real{n} $ is compact and $ S(A,r) $ is countably $ \rect{n-1} $ rectifiable of class $ 2 $. 

If $U$ is the set of all $x \in \Real{n}$ such that there exists a unique $a \in A$ with $|x-a| = \bm{\delta}_{A}(x)$, we define the \textit{nearest point projection onto~$A$} as
		the map $\bm{\xi}_{A}$ characterised by the requirement
		\begin{equation*}
		| x- \bm{\xi}_{A}(x)| = \bm{\delta}_{A}(x) \quad \textrm{for $x \in U$}.
		\end{equation*}
		Let $U(A) = \dmn \bm{\xi}_{A} \sim A$. The functions $ \bm{\nu}_{A} $ and $ \bm{\psi}_{A} $ are defined by
		\begin{equation*}
		\bm{\nu}_{A}(z) = \bm{\delta}_{A}(z)^{-1}(z -  \bm{\xi}_{A}(z)) \quad \textrm{and} \quad \bm{\psi}_{A}(z)= (\bm{\xi}_{A}(z), \bm{\nu}_{A}(z)),
		\end{equation*}
		whenever $ z \in U(A)$.
	\end{Miniremark}
\begin{Miniremark}\label{curvature of arbitrary closed sets 2}
(cf.\ \cite[3.6, 3.13]{Santilli2019}) We define the Borel function $ \rho(A, \cdot) $ setting
\begin{equation*}
\rho(A,x) = \sup \{t : \bm{\delta}_{A}(\bm{\xi}_{A}(x) + t (x-\bm{\xi}_{A}(x) ))=t \bm{\delta}_{A}(x)  \} \quad \textrm{for $ x \in U(A) $,}
\end{equation*}
and we say that $ x \in U(A) $ is a \emph{regular point of $ \bm{\xi}_{A} $} provided that $ \bm{\xi}_{A}$ is approximately differentiable at $ x $ with symmetric approximate differential and $ \ap \lim_{y \to x} \rho(A,y) = \rho(A,x)>1$ (see \cite[2.4, 2.5]{Santilli2019} for the definition of approximate limit and approximate differentiability). The set of regular points of $ \bm{\xi}_{A} $ is denoted by $ R(A)$.
\end{Miniremark}

\begin{Miniremark}\label{curvature of arbitrary closed sets 3}
 (cf.\ \cite[4.1, 4.4, 4.7, 4.9]{Santilli2019}) \emph{The generalized unit normal bundle of $ A $} is defined as
\begin{equation*}
N(A) = (A \times \mathbf{S}^{n-1}) \cap \{ (a,u) :  \bm{\delta}_{A}(a+su)=s \; \textrm{for some $ s > 0 $}\}
\end{equation*}
and $ N(A,a) = \{ v : (a,v) \in N(A)   \} $ for $ a \in A $. 

If $ x \in R(A) $ then we say that \emph{$ \bm{\psi}_{A}(x) $ is a regular point of $ N(A) $.} We denote the set of all regular points of $N(A)$ by $R(N(A))$. For every $ (a,u) \in R(N(A)) $ we define
\begin{equation*}
T_{A}(a,u) = \im \ap \Der \bm{\xi}_{A}(x) \quad \textrm{and} \quad Q_{A}(a,u)(\tau,\tau_{1}) = \tau \bullet \ap \Der \bm{\nu}_{A}(x)(v_{1}),
\end{equation*}
where $ x $ is a regular point of $ \bm{\xi}_{A}$ such that $ \bm{\psi}_{A}(x)= (a,u) $, $ \tau \in T_{A}(a,u) $, $  \tau_{1} \in T_{A}(a,u) $ and $ v_{1} \in \Real{n} $ such that $ \ap \Der \bm{\xi}_{A}(x)(v_{1}) = \tau_{1}$. We say that $ Q_{A}(a,u) $ is the \textit{second fundamental form of $ A $ at $ a $ in the direction $ u $.} 

If $(a,u) \in R(N(A))$ \emph{the principal curvatures of $ A $ at $(a,u) $} are the numbers
\begin{equation*}
\kappa_{A,1}(a,u) \leq \ldots \leq \kappa_{A,n-1}(a,u),
\end{equation*}
defined so that $\kappa_{A,m +1}(a,u) = \infty $, $\kappa_{A,1}(a,u), \ldots , \kappa_{A,m}(a,u)$ are the eigenvalues of $ Q_{A}(a,u)$ and $ m = \dim T_{A}(a,u)$. Moreover
\begin{equation*}
\chi_{A,1}(x) \leq \ldots \leq \chi_{A,n-1}(x)
\end{equation*}
are the eigenvalues of $ \ap \Der \bm{\nu}_{A}(x)| \{v : v \bullet \bm{\nu}_{A}(x)=0  \} $ for $ x \in R(A) $. 

It follows from \cite[4.10]{Santilli2019} that if $ r > 0 $ and $ x \in S(A,r) \cap R(A) $ then
\begin{equation}\label{curvature of arbitrary closed sets 3: eq1}
\kappa_{A,i}(\bm{\psi}_{A}(x)) = \chi_{A,i}(x)(1-r\chi_{A,i}(x))^{-1} \quad \textrm{for $ i = 1, \ldots , n-1 $.}
\end{equation}

\end{Miniremark}
\begin{Miniremark}\label{curvature of arbitrary closed sets 4} 
(cf.\ \cite[5.1, 5.2]{Santilli2019})	For each $ a \in A $ we define the closed convex subset
	\begin{equation*}
	\Dis(A,a) = \{ v : |v| = \bm{\delta}_{A}(a+v)  \}
	\end{equation*}
	and we notice that $ N(A,a) = \{  v/|v|: 0 \neq v \in \Dis(A,a)  \} $. For every integer $ 0 \leq m \leq n $ we define \textit{the $m$-th stratum of $A$} by
	\begin{equation*}
	A^{(m)} = A \cap \{ a : \dim \Dis(A,a) = n-m \};
	\end{equation*}
	this is a Borel set which is countably $ m $ rectifiable and countably $ \rect{m} $ rectifiable of class $ 2 $; see \cite[4.12]{MR4012808}. 
	
	The following assertion will be useful: \emph{if $ a \in A^{(m)} $ then
	\begin{equation*}
	\Haus{n-m-1}(N(A,a) \cap V) > 0
	\end{equation*}	
whenever $ V $ is an open subset of $ \Real{n} $ such that $ V \cap N(A,a) \neq \varnothing $}. In fact, noting that $ \Haus{n-m}(U \cap \Dis(A,a)) > 0 $ whenever $ U $ is open and $ U \cap \Dis(A,a) \neq \varnothing $, the assertion follows applying Coarea formula \cite[3.2.22(3)]{MR0257325}.
\end{Miniremark}

The relation between the two notions of second fundamental form defined in \ref{ap second fundamental form} and \ref{curvature of arbitrary closed sets 3} is given by the following result, proved in \cite[6.2]{Santilli2019}. 

\begin{Theorem}\label{curvature of arbitrary closed sets 5} 
	If $ A \subseteq \Real{n} $ is a closed set, $ 1 \leq m \leq n-1 $  and $ S \subseteq A $ is $ \Haus{m} $ measurable and $ \rect{m} $ rectifiable of class $ 2 $ then there exists $ R \subseteq S $ such that $ \Haus{m}(S \sim R) = 0 $,
	\begin{equation*}
	\textstyle	\ap \Tan(S,a) = T_{A}(a,u)\in \mathbf{G}(n,m), \quad \ap \mathbf{b}_{S}(a)(\tau, \upsilon) \bullet u = - Q_{A}(a,u)(\tau, \upsilon) 
	\end{equation*}
	for every $ \tau, \upsilon \in T_{A}(a,u) $ and for $ \Haus{n-1} $ a.e.\ $ (a,u) \in N(A)|R $.
\end{Theorem}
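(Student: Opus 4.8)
\begin{proof*}[Plan of proof]
\emph{Reduction.} Since $\ap\Tan(S,\cdot)$ and $\ap\mathbf{b}_{S}(\cdot)$ are unaffected by $\Haus{m}$ null modifications of $S$, and since $\Haus{m}(S)<\infty$, I would first use the $\rect{m}$ rectifiability of class $2$ of $S$ to write $S$ as the disjoint union of Borel sets $S_{i}\subseteq M_{i}$ and an $\Haus{m}$ null set, the $M_{i}$ being $m$ dimensional submanifolds of class $2$ of $\Real{n}$, and reduce to the case $S\subseteq M$ for a single such $M$ (taking $R=\bigcup_{i}R_{i}$ at the end). I would then let $R_{0}$ be the set of all $a\in S$ with $\Hdensity{m}{S}{a}=1$, $\ap\Tan(S,a)=\Tan(M,a)\in\mathbf{G}(n,m)$, and $\ap\mathbf{b}_{S}(a)$ equal to the classical second fundamental form of $M$ at $a$; by the elementary properties of $\Haus{m}$ rectifiable sets and of approximate differentiability of order $2$ (see \cite[2.7--2.11]{Santilli2019}) one has $\Haus{m}(S\without R_{0})=0$, and $R=R_{0}$ will be the required set.

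\emph{Normality and regular normal rays.} Next I would check that $N(A,a)\subseteq\Tan(M,a)^{\perp}\cap\mathbf{S}^{n-1}$ for every $a\in R_{0}$: if $(a,u)\in N(A)$ then $\mathbf{U}(a+su,s)\cap A=\varnothing$ for some $s>0$, hence $\mathbf{U}(a+su,s)\cap S=\varnothing$; writing $M$ near $a$ as a class $2$ graph over $\Tan(M,a)$ with vanishing first order term at $a$, a second order expansion shows that if $u\bullet\tau\neq 0$ for some $\tau\in\Tan(M,a)$ then $\mathbf{U}(a+su,s)$ contains relatively open pieces of $M$ of positive $\Haus{m}$ measure in every neighbourhood of $a$, contradicting $\Hdensity{m}{S}{a}=1$. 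It is also elementary that, for $(a,u)\in N(A)$ and $0<t<s_{0}:=\sup\{s:\bm{\delta}_{A}(a+su)=s\}$, the point $a$ is the unique point of $A$ nearest to $a+tu$, so the open segment from $a$ to $a+s_{0}u$ lies in $U(A)$ with $\bm{\xi}_{A}\equiv a$ and $\rho(A,\cdot)>1$ along it. Since $R(A)$ is $\Leb{n}$ co-null in $U(A)$, a Fubini argument along these segments — the same one by which $\Haus{n-1}$ almost every point of $N(A)$ is a regular point of $N(A)$, cf.\ \ref{curvature of arbitrary closed sets 3} and \cite{Santilli2019} — yields, for $\Haus{n-1}$ almost every $(a,u)\in N(A)|R_{0}$, a radius $r\in(0,s_{0})$ with $x:=a+ru\in R(A)\cap S(A,r)$ and $\bm{\psi}_{A}(x)=(a,u)$.

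\emph{Identification.} Fixing such a triple $(a,u,x)$, I would use that $\bm{\nu}_{A}$ is constant along the ray through $x$ in direction $u=\bm{\nu}_{A}(x)$, so $\ap\Der\bm{\nu}_{A}(x)(u)=0$, $\ap\Der\bm{\xi}_{A}(x)(u)=0$ and $T_{A}(a,u)=\im\ap\Der\bm{\xi}_{A}(x)$; since $\bm{\xi}_{A}$ takes values in $A$ and $\ap\Der\bm{\xi}_{A}(x)$ is symmetric, $T_{A}(a,u)$ lies in the largest linear subspace of $\Tan(A,a)$, and for $a\in R_{0}$ with $N(A,a)\neq\varnothing$ one has $\Tan(A,a)\supseteq\Tan(M,a)$, $\Nor(A,a)\subseteq\Tan(M,a)^{\perp}$ and $a\in A^{(k)}$ for some $k\geq m$. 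On the set where $\dim T_{A}(a,u)=m$ — which by the next paragraph has to be shown $\Haus{n-1}$ co-null in $N(A)|R_{0}$ — the identity $\ker\ap\Der\bm{\xi}_{A}(x)=T_{A}(a,u)^{\perp}$ gives $T_{A}(a,u)=\Tan(M,a)=\ap\Tan(S,a)$ and that $\ap\Der\bm{\xi}_{A}(x)$ restricts to an invertible symmetric endomorphism of $\Tan(M,a)$ equal to $\mathrm{id}-r\,\ap\Der\bm{\nu}_{A}(x)|\Tan(M,a)$; combining this with \eqref{curvature of arbitrary closed sets 3: eq1} and with the approximate differentiability of order $2$ of $\bm{\xi}_{A}$ along $S(A,r)$ — using that the image of $\bm{\xi}_{A}|S(A,r)$ near $x$ is a positive--density subset of $S$, hence carries $\ap\mathbf{b}_{S}(a)$ at $a$ — a direct computation should identify $\ap\Der\bm{\xi}_{A}(x)|\Tan(M,a)$ with $\big(\mathrm{id}-r\,\ap\mathbf{b}_{S}(a)(\cdot,\cdot)\bullet u\big)^{-1}$ and hence give $Q_{A}(a,u)(\tau,\upsilon)=-\ap\mathbf{b}_{S}(a)(\tau,\upsilon)\bullet u$ for all $\tau,\upsilon\in\Tan(M,a)$.

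\emph{Main obstacle.} The hard part, on which everything rests, is to prove that $\{(a,u)\in N(A)|R_{0}:\dim T_{A}(a,u)\neq m\}$ — equivalently $\{\kappa_{A,m}=\infty\}$ — is $\Haus{n-1}$ null, and to make the second--order computation above rigorous; I do not expect either to follow from a naive pointwise comparison of $\bm{\delta}_{A}$ with the distance function of $M$, since $A$ may carry arbitrary additional material near $a$. Both should come out of the fine second--order analysis of the distance spheres in \cite{Santilli2019}: the approximate pointwise differentiability of order $2$ of $\bm{\xi}_{A}$ and $\bm{\nu}_{A}$ along $S(A,r)$, the resulting area and coarea formulae for $\bm{\xi}_{A}$, the formula \eqref{curvature of arbitrary closed sets 3: eq1}, and the fact that each stratum $A^{(k)}$ is $\rect{k}$ rectifiable of class $2$; since $N(A)|R_{0}\subseteq\bigcup_{k\geq m}N(A)|A^{(k)}$ and $R_{0}\cap A^{(k)}$ is $\Haus{k}$ null for $k>m$, the analysis of $N(A)$ over the strata should show that the part of $N(A)|R_{0}$ lying over $\bigcup_{k>m}A^{(k)}$, together with the part where $\dim T_{A}<m$, is $\Haus{n-1}$ negligible, leaving $N(A)|R_{0}$ equal, up to an $\Haus{n-1}$ null set, to $N(A)|(R_{0}\cap A^{(m)})$, on which $\dim T_{A}(a,u)=m$.
\end{proof*}
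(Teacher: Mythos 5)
Note first that the paper does not prove this theorem: it is quoted verbatim from \cite[6.2]{Santilli2019}, so your plan has to be judged against that argument. Your preliminary steps are sound and in the spirit of that reference (reduction to a single $\mathcal{C}^{2}$ manifold $M$; the observation that $N(A,a)\subseteq\Tan(M,a)^{\perp}$ at points of density one, hence $a\in A^{(k)}$ with $k\geq m$; the lifting of $\Haus{n-1}$ a.e.\ $(a,u)\in N(A)$ to regular points $x=a+ru\in R(A)\cap S(A,r)$ with $\bm{\psi}_{A}(x)=(a,u)$). The genuine gap is exactly where you place your ``main obstacle'', and the mechanism you sketch for closing it cannot work. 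You fix $R=R_{0}$ by purely intrinsic, pointwise a.e.\ conditions on $S$ (density one plus approximate second-order data inherited from $M$) and then need that $N(A)|R_{0}\cap\{(a,u):\dim T_{A}(a,u)\neq m\}$ is $\Haus{n-1}$ null. But the whole subtlety of the statement, stressed in Remark \ref{curvature of arbitrary closed sets 5 remark} and in \cite[6.3]{Santilli2019}, is that the set of pairs with $\kappa_{A,m}(a,u)=\infty$ can have \emph{positive} $\Haus{n-1}$ measure even when $S$ is a $\mathcal{C}^{1,\alpha}$ convex hypersurface rectifiable of class $2$; it then necessarily sits over an $\Haus{m}$ null set of base points, and the correct $R$ must be obtained by \emph{removing} that projection from $S$. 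In other words, the heart of the proof is to show first that, over $S$, the failure set is ($\Haus{n-1}$ almost) concentrated over an $\Haus{m}$ null set of base points, and then to define $R$ accordingly; your plan never establishes this, and membership of $a$ in $R_{0}$ is not by itself shown to exclude degenerate normals of positive bundle measure over an $\Haus{m}$ negligible portion of $R_{0}$.

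Moreover, the specific argument you gesture at is based on principles that are false for arbitrary closed sets. You argue that since $R_{0}\cap A^{(k)}$ is $\Haus{k}$ null for $k>m$, the part of $N(A)|R_{0}$ over the higher strata is $\Haus{n-1}$ negligible, and that over $R_{0}\cap A^{(m)}$ one has $\dim T_{A}(a,u)=m$; both assertions are instances of a Lusin (N) type principle which the paper emphasizes \emph{fails} in general: for a typical convex body ($m=n-1$) every boundary point lies in $A^{(n-1)}$, yet $\dim T_{A}(a,u)<n-1$ on a subset of $N(A)$ of positive $\Haus{n-1}$ measure projecting onto an $\Haus{n-1}$ null set of boundary points. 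This is precisely why the $m$ dimensional Lusin (N) condition is a nontrivial property of $(m,h)$ sets, proved in Theorem \ref{Lusin property for (m,h) sets} via the maximum principle, and it is not available in the generality of the present statement, so stratum dimension counting alone cannot deliver the needed nullity. A secondary but real defect is in your identification step: the image of $\bm{\xi}_{A}|S(A,r)$ near $x$ lies in $A$, not in $S$, and need not meet $S$ in a set of positive density at $a$ (the extra material of $A$ near $a$ is exactly what can capture the projection), so relating $\ap\Der\bm{\xi}_{A}(x)$ and $\ap\Der\bm{\nu}_{A}(x)$ to $\ap\mathbf{b}_{S}(a)$ requires the genuinely second-order comparison carried out in \cite[6.2]{Santilli2019}. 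As it stands, both the key measure-zero claim and the computation are deferred to the very reference being reproduced, so the core of the theorem is assumed rather than proved.
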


\begin{Remark}\label{curvature of arbitrary closed sets 5 remark} 
It is in general not possible to replace $ N(A)|R $ with $ N(A)|S $ in the conclusion, even if $ S $ is the boundary of a $ \mathcal{C}^{1,\alpha} $ convex set $ A $; see the example in \cite[6.3]{Santilli2019}.
\end{Remark}

\subsection*{Level sets of the distance function}

We conclude this preliminary section providing a structural result for the level sets of the distance function from an arbitrary closed set, which is sufficient for the purpose of the present work. Other structural results are available, in particular we refer to \cite{MR2954647} and references therein.
  
\begin{Theorem}[Gariepy-Pepe]\label{Gariepy-Pepe}
Suppose $ A $ is a closed subset of $ \Real{n} $, $ r > 0 $, $ x \in S(A,r) $, $ \bm{\delta}_{A} $ is differentiable at $ x $ and $ T = \{ v : v \bullet \nabla \bm{\delta}_{A}(x) =0 \} $. 

Then there exists an open neighborhood $ V $ of $ x $ and a Lipschitzian function $ f : T \rightarrow T^{\perp} $ such that $ f $ is differentiable at $ T_{\natural}(x) $ with $ \Der f(T_{\natural}(x)) =0 $ and
\begin{equation*}
	 V \cap S(A,r) = V \cap \{ \chi + f(\chi) : \chi \in T \}.
\end{equation*}
\end{Theorem}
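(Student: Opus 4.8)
The plan is to prove that in a neighbourhood of $x$ the level set $S(A,r)$ coincides with a Lipschitzian graph over $T$, by establishing a \emph{uniform two–point cone estimate} on $S(A,r)$ near $x$, and then to deduce the differentiability at $T_{\natural}(x)$ with vanishing derivative by letting the aperture of the cone shrink with the neighbourhood. First I would record two elementary facts. Since $r=\bm{\delta}_{A}(x)>0$ we have $x\notin A$; if $a$ is a nearest point of $x$, then $\bm{\delta}_{A}\big((1-t)a+tx\big)=tr$ for $t\in[0,1]$ (``$\geq$'' being the triangle inequality at $x$), so $\bm{\delta}_{A}$ has directional derivative $-1$ at $x$ along $(a-x)/r$; as $\bm{\delta}_{A}$ is $1$–Lipschitzian and differentiable at $x$, Cauchy–Schwarz forces $\nabla\bm{\delta}_{A}(x)=\nu:=(x-a)/r$ (a unit vector), and the same reasoning applied to a second nearest point shows the nearest point $a$ of $x$ is unique. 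Secondly, if $y\to x$ and $b_{y}$ is any nearest point of $y$, then $|x-b_{y}|\leq|x-y|+\bm{\delta}_{A}(y)\to r$ while $|x-b_{y}|\geq r$, so by boundedness and uniqueness $b_{y}\to a$; hence, writing $u_{y}:=(y-b_{y})/r$ for a chosen nearest point $b_{y}$ of $y$, one has $u_{y}\to\nu$ as $y\to x$ along $S(A,r)$.

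The key step is the following estimate. Let $y,y'\in S(A,r)$ and let $b$ be a nearest point of $y$; then $|y-b|=r\leq|y'-b|$, and expanding $|y-b|^{2}\leq|y'-b|^{2}$ while substituting $y+y'-2b=2r u_{y}+(y'-y)$ (with $u_{y}=(y-b)/r$) gives $(y-y')\bullet u_{y}\leq|y-y'|^{2}/(2r)$; by symmetry $(y'-y)\bullet u_{y'}\leq|y-y'|^{2}/(2r)$. Now fix $\epsilon\in(0,\tfrac14)$. Using the second fact above one picks $\sigma>0$ so that $V:=\mathbf{U}(x,\sigma)$ satisfies $|u_{y}-\nu|<\epsilon$ for all $y\in S(A,r)\cap V$ and $\diameter V<2r\epsilon$; then for all $y,y'\in S(A,r)\cap V$
\begin{equation*}
|(y-y')\bullet\nu|\;\leq\;\frac{|y-y'|^{2}}{2r}+\epsilon\,|y-y'|\;\leq\;2\epsilon\,|y-y'|.
\end{equation*}
Decomposing $y-y'=w+t\nu$ with $w\in T$, this yields $|t|\leq 2\epsilon(|w|^{2}+t^{2})^{1/2}$, i.e. $|t|\leq L_{\epsilon}|w|$ with $L_{\epsilon}:=2\epsilon(1-4\epsilon^{2})^{-1/2}$. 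In particular $T_{\natural}$ is injective on $S(A,r)\cap V$ and the induced graphing map $g(T_{\natural}(y)):=T^{\perp}_{\natural}(y)\in T^{\perp}$ is $L_{\epsilon}$–Lipschitzian on its domain.

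To conclude I would show that this domain contains a $T$–neighbourhood of $\bar x:=T_{\natural}(x)$, and extend. Set $c:=x\bullet\nu$, so $x=\bar x+c\nu$. Given $\chi\in T$ with $|\chi-\bar x|$ small, differentiability of $\bm{\delta}_{A}$ at $x$ gives $|\bm{\delta}_{A}(\chi+\tau\nu)-r-(\tau-c)|\leq\epsilon(|\chi-\bar x|+|\tau-c|)$ for $\tau$ near $c$; taking $\tau-c=\pm\frac{2\epsilon}{1-\epsilon}|\chi-\bar x|$ makes the corresponding values respectively $>r$ and $<r$, so by the intermediate value theorem there is $\tau^{\ast}$ with $\bm{\delta}_{A}(\chi+\tau^{\ast}\nu)=r$ and $|\tau^{\ast}-c|<\frac{2\epsilon}{1-\epsilon}|\chi-\bar x|$; for $|\chi-\bar x|$ small, $\chi+\tau^{\ast}\nu\in S(A,r)\cap V$. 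Thus $g$ is defined and $L_{\epsilon}$–Lipschitzian on $\mathbf{U}(\bar x,\sigma')\cap T$ for some $\sigma'>0$; extending it to a Lipschitzian $f:T\to T^{\perp}$ and taking $V':=\mathbf{U}(x,\sigma'')$ with $\sigma''<\min\{\sigma,\sigma'\}$, the injectivity of $T_{\natural}$ on $S(A,r)\cap V$ together with $T_{\natural}(V')\subseteq\mathbf{U}(\bar x,\sigma')$ yield $V'\cap S(A,r)=V'\cap\{\chi+f(\chi):\chi\in T\}$. Finally, since $f(\bar x)=x-\bar x$, applying the key estimate to $y=\chi+f(\chi)$ and $y'=x$ (which lie in $S(A,r)\cap V$ once $|\chi-\bar x|$ is small, noting that $f$ near $\bar x$ is the fixed germ of $g$) gives $|f(\chi)-f(\bar x)|\leq L_{\epsilon}|\chi-\bar x|$; as $\epsilon>0$ was arbitrary and $L_{\epsilon}\to0$, $f$ is differentiable at $\bar x$ with $\Der f(\bar x)=0$.

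The main obstacle is precisely the passage from the \emph{pointwise} differentiability of $\bm{\delta}_{A}$ at $x$ to the \emph{uniform} two–point cone estimate on $S(A,r)$: differentiability at the single point $x$ only confines $S(A,r)$ to a double cone with vertex $x$ and does not, by itself, prevent vertical oscillation of the level set inside that cone (indeed the inner ball $\mathbf{B}(a,r)$ alone allows cusp–type boundary behaviour, as happens already for the parallel surface of a paraboloid at its focal distance, where $S(A,r)$ is $\mathcal{C}^{1}$ but not $\mathcal{C}^{1,1}$). The missing rigidity is supplied by the distance–function relation $|y-b|=r\leq|y'-b|$ for $b$ a nearest point of $y\in S(A,r)$, made effective by the convergence $b_{y}\to\bm{\xi}_{A}(x)$ as $y\to x$ along $S(A,r)$.
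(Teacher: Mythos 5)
Your proof is correct, and it takes a genuinely different route from the paper. The paper's own proof is essentially a citation: it invokes the arguments of Gariepy--Pepe \cite[Theorem 1]{MR0287442} for the local Lipschitz graph representation of $S(A,r)$, and then obtains the differentiability of $f$ at $T_{\natural}(x)$ with $\Der f(T_{\natural}(x))=0$ from the single observation $\Tan(S(A,r),x)\subseteq T$ together with the elementary footnote lemma that a graph function continuous at a point whose graph has tangent cone contained in $T$ there is differentiable at that point with vanishing derivative. You instead give a self-contained argument: differentiability of $\bm{\delta}_{A}$ at $x$ forces $\nabla\bm{\delta}_{A}(x)=(x-a)/r$ for the (hence unique) nearest point $a$, nearest points of $y\in S(A,r)$ converge to $a$ as $y\to x$, and the two-point inequality $(y-y')\bullet u_{y}\leq |y-y'|^{2}/(2r)$ coming from $|y-b|=r\leq|y'-b|$ yields a uniform cone estimate $|T^{\perp}_{\natural}(y-y')|\leq L_{\epsilon}|T_{\natural}(y-y')|$ on $S(A,r)\cap\mathbf{U}(x,\sigma_{\epsilon})$; injectivity of $T_{\natural}$, an intermediate-value argument (using $\nabla\bm{\delta}_{A}(x)=\nu$) for surjectivity of the graphing map onto a $T$-neighbourhood of $T_{\natural}(x)$, and a Lipschitz extension then produce $f$, while letting $\epsilon\to 0$ (with the germ of $f$ near $T_{\natural}(x)$ fixed by the local graph description) gives $L_{\epsilon}\to 0$ and hence $\Der f(T_{\natural}(x))=0$ directly, without the separate tangent-cone lemma. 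What each approach buys: the paper's proof is two lines but leans entirely on the cited reference; yours is longer but self-contained, quantitative (an explicit Lipschitz bound $L_{\epsilon}=2\epsilon(1-4\epsilon^{2})^{-1/2}$ on shrinking neighbourhoods, which also recovers $\Tan(S(A,r),x)\subseteq T$ as a byproduct), and makes transparent exactly where the nearest-point structure of the distance function enters. All the steps I checked (uniqueness of the nearest point, the expansion giving the two-point estimate, the IVT step, and the passage from the fixed-$\epsilon$ construction to differentiability at the point) are sound.
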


\begin{proof}
The arguments in the proof of \cite[Theorem 1]{MR0287442} prove the statement with the exception of the differentiability properties of $ f $, which can be easily deduced\footnote{In fact the following statement follows from the definition of tangent cone (see \cite[3.1.21]{MR0257325}). \emph{If $ T \in \mathbf{G}(n, n-1) $, $ \alpha \in T $, $ f : T \rightarrow T^{\perp} $ is continuous at $\alpha$, $ a = \alpha + f(\alpha) $, $ A = \{ \chi + f(\chi): \chi \in T  \} $ and $ \Tan(A,a) \subseteq T $ then $f$ is differentiable at $ \alpha $ with $\Der f(\alpha) =0 $.}} noting that $ \Tan(S(A,r),x) \subseteq T $.
\end{proof}

\begin{Lemma}\label{local structure of level sets}
	If $ A \subseteq \Real{n} $ is a closed set then the following conclusion holds for $ \Leb{1} $ a.e.\ $ r > 0 $ and for $ \Haus{n-1} $ a.e.\ $ x \in S(A,r) $:
	\begin{equation*}
	\Tan(S(A,r),x) = \ap \Tan(S(A,r),x) = \{ v : v \bullet \bm{\nu}_{A}(x)=0  \}
	\end{equation*}
and, if $ T = \Tan(S(A,r),x) $, there exists an open neighborhood $ V $ of $ x $ and a Lipschitzian function $ f : T \rightarrow T^{\perp} $ such that $ f $ is pointwise differentiable of \mbox{order $ 2 $} at $T_{\natural}(x)$, $\Der f(T_{\natural}(x))=0 $, 
	\begin{equation*}
\textstyle \Der^{2}f(T_{\natural}(x))(u,v) \bullet \bm{\nu}_{A}(x) = - \ap \Der \bm{\nu}_{A}(x)(u) \bullet v \quad \textrm{for $ u,v \in T $}
	\end{equation*}
and $	V \cap S(A,r) = V \cap \{ \chi + f(\chi) : \chi \in T \} $.
\end{Lemma}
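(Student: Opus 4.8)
The plan is to combine the Gariepy--Pepe graphical representation (Theorem~\ref{Gariepy-Pepe}) with the second-order differentiability theory of $\bm{\xi}_A$ and $\bm{\nu}_A$ recalled in \ref{curvature of arbitrary closed sets 1}--\ref{curvature of arbitrary closed sets 3}, using a coarea/Fubini argument to pass from ``almost every point of one fixed level set'' to ``almost every $r$ and almost every point of $S(A,r)$''. First I would fix the two sets of bad points whose complement I want to use. On one hand, $\bm{\delta}_A$ is differentiable $\Leb{n}$ a.e.\ on $\Real{n}\without A$ (Rademacher), so the points $x\in U(A)$ where $\bm{\delta}_A$ is differentiable (equivalently $\bm{\xi}_A$ is single-valued there) form a set of full measure; by the coarea formula applied to $\bm{\delta}_A$ (whose gradient has unit length wherever it exists), for $\Leb{1}$ a.e.\ $r>0$ this gives $\Haus{n-1}$ a.e.\ $x\in S(A,r)$. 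On the other hand, $R(A)$, the set of regular points of $\bm{\xi}_A$ in the sense of \ref{curvature of arbitrary closed sets 2}, also has full $\Leb{n}$ measure in $U(A)$ by \cite{Santilli2019} (this is exactly where the pointwise second-order differentiability of $\bm{\xi}_A$, $\bm{\nu}_A$ lives), and the same coarea argument puts $\Haus{n-1}$ a.e.\ $x\in S(A,r)$ into $R(A)$ for $\Leb{1}$ a.e.\ $r$. So for a.e.\ $r$ and $\Haus{n-1}$ a.e.\ $x\in S(A,r)$ I may assume simultaneously: $\bm{\delta}_A$ is differentiable at $x$, and $x\in R(A)$.

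Next I would identify the tangent spaces. At such an $x$, $\nabla\bm{\delta}_A(x)=\bm{\nu}_A(x)$, so $T:=\{v:v\bullet\nabla\bm{\delta}_A(x)=0\}=\{v:v\bullet\bm{\nu}_A(x)=0\}$, which is the claimed description of the tangent plane. Theorem~\ref{Gariepy-Pepe} already gives an open neighborhood $V$ and a Lipschitz $f:T\to T^\perp$ with $f$ differentiable at $T_\natural(x)$, $\Der f(T_\natural(x))=0$, and $V\cap S(A,r)=V\cap\{\chi+f(\chi):\chi\in T\}$. From $\Der f(T_\natural(x))=0$ one reads off $\Tan(S(A,r),x)=T$; that $\ap\Tan(S(A,r),x)$ equals the same plane follows because a pointwise-differentiable (indeed Lipschitz) graph has approximate tangent cone equal to its tangent cone at points where it is differentiable, and $S(A,r)$ coincides with the graph near $x$. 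So the first displayed equality of the lemma holds.

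The substantive point is the second-order statement: that $f$ is pointwise differentiable of order $2$ at $T_\natural(x)$ with Hessian given by $-\ap\Der\bm{\nu}_A(x)$ restricted to $T\times T$. Here is where I would use $x\in R(A)$. Since $x$ is a regular point, $\bm{\xi}_A$ and $\bm{\nu}_A$ are approximately differentiable at $x$ with symmetric approximate differentials, and by the level-set identity $\bm{\delta}_A\equiv r$ on $S(A,r)$ one has, along $S(A,r)$, that $\bm{\nu}_A$ is (approximately) the Gauss map of the graph of $f$, while $\bm{\nu}_A(x)\bullet(\chi+f(\chi)-x)$ measures, to second order, the normal displacement of the graph. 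Concretely: write $g(\chi)=f(\chi)\bullet\bm{\nu}_A(x)$ (the scalar ``height'' in the direction $\bm{\nu}_A(x)$, the other components of $f$ being second-order negligible because $\Der f=0$); differentiating the relation $\bm{\nu}_A(\chi+f(\chi))\bullet$ (graph-tangent)$=0$ and using the approximate differentiability of $\bm{\nu}_A$ at $x$ yields the Hessian identity $\Der^2 g(T_\natural x)(u,v)=-\ap\Der\bm{\nu}_A(x)(u)\bullet v$ for $u,v\in T$. The cleanest route is probably to invoke \cite[2.7--2.8]{Santilli2019} (the approximate second-order differentiability package): $S(A,r)$ is $\rect{n-1}$ rectifiable of class $2$ for a.e.\ $r$ by \ref{curvature of arbitrary closed sets 1}, and at $\Haus{n-1}$ a.e.\ point of such a set a graph representation that is pointwise twice differentiable exists, whose second fundamental form against the normal is exactly $\ap\Der\bm{\nu}_A$ — this is precisely the content that feeds into Theorem~\ref{curvature of arbitrary closed sets 5}. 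I then match this graph with the Gariepy--Pepe graph $f$ (they agree near $x$, and both are differentiable at $T_\natural x$, hence their second-order data coincide wherever the second-order data of either exists), transferring the twice-differentiability and the Hessian formula to $f$.

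The main obstacle I anticipate is the bookkeeping in this last step: reconciling the ``approximate'' second-order differentiability of $\bm{\xi}_A,\bm{\nu}_A$ at $x$ (an $\Leb{n}$-a.e.\ statement in the ambient space) with a ``pointwise'' second-order statement for the graph $f$ on the $(n-1)$-plane $T$, and making sure the coarea/Fubini reduction genuinely delivers, for a.e.\ $r$, a full-$\Haus{n-1}$-measure set of $x\in S(A,r)$ at which \emph{both} the ambient regularity ($x\in R(A)$) and the differentiability of $\bm{\delta}_A$ hold and at which the graph is nondegenerate enough for the Hessian identity to be read off. Handling the components of $f$ transverse to $\bm{\nu}_A(x)$ (showing they contribute nothing at second order, which is where $\Der f(T_\natural x)=0$ is used quantitatively) is the one genuine computation, but it is short.
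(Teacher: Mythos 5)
There is a genuine gap at the heart of your argument: the lemma asserts \emph{pointwise} differentiability of order $2$ of the Gariepy--Pepe graph $f$, i.e.\ a second-order Taylor bound valid for \emph{all} points of $S(A,r)$ in a neighborhood of $x$, whereas the "approximate second-order differentiability package" you invoke (class-$2$ rectifiability of $S(A,r)$ from \ref{curvature of arbitrary closed sets 1} together with \cite{San}, \cite[2.7--2.8]{Santilli2019}) only produces, at $\Haus{n-1}$ a.e.\ point, a twice-differentiable graph that coincides with $S(A,r)$ up to a set of $\Haus{n-1}$-density zero at $x$. Your matching step ("they agree near $x$, \dots hence their second-order data coincide") is therefore unjustified: the approximate graph does not coincide with the full level set near $x$, so no second-order pointwise control on $f$ can be transferred from it; this is exactly the approximate-versus-pointwise issue you flag at the end as "bookkeeping", but it is the substance of the lemma, not bookkeeping. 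The first-order part of your argument (coarea reduction, $\bm{\nu}_{A}(x)=\nabla\bm{\delta}_{A}(x)$, identification of $\Tan(S(A,r),x)$ via \ref{Gariepy-Pepe}) is fine and agrees with the paper.

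The paper closes this gap differently. For $\Leb{1}$ a.e.\ $r$ and $\Haus{n-1}$ a.e.\ $x\in S(A,r)$ it produces a two-sided touching-ball (quadratic pinching) estimate: on one side, arguing as in \cite[2.16, 3.12]{Santilli2019}, there is $s>0$ with $\mathbf{U}(x+s\bm{\nu}_{A}(x),s)\cap S(A,r)=\varnothing$; on the other side, trivially $\mathbf{U}(a,r)\cap S(A,r)=\varnothing$ with $a=\bm{\xi}_{A}(x)$. Together these force
\begin{equation*}
\limsup_{t \to 0} t^{-2}\sup\{\bm{\delta}_{\Tan(S(A,r),x)}(z-x) : z \in \mathbf{U}(x,t)\cap S(A,r)\}<\infty ,
\end{equation*}
i.e.\ pointwise differentiability of order $(1,1)$ of $S(A,r)$ at a.e.\ point, and then the a.e.\ upgrade theorem \cite[5.7(3)]{MR3936235} (a Stepanov/Alexandrov-type result for sets) yields pointwise differentiability of order $2$ at $\Haus{n-1}$ a.e.\ point; the Hessian identity with $\ap\Der\bm{\nu}_{A}(x)$ is then read off from \cite[3.12]{Santilli2019} together with \ref{Gariepy-Pepe} and \cite[3.14]{MR3936235}. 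Neither the two-sided ball argument nor the $(1,1)\Rightarrow 2$ upgrade appears in your proposal, and without some substitute for them the claimed pointwise second-order differentiability of $f$ does not follow.
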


\begin{proof}
	Since $ \bm{\delta}_{A} $ is differentiable at $ \Leb{n} $ a.e.\ $ x \in \Real{n} $, it follows from \cite[4.8(3)]{MR0110078} and Coarea formula that $ \bm{\nu}_{A}(x) = \nabla \bm{\delta}_{A}(x) $ for $ \Haus{n-1} $ a.e.\ $ x \in S(A,r) $ and for $ \Leb{1} $ a.e.\ $ r > 0 $. Henceforth, it follows from \cite[3.14]{MR3936235} and \ref{Gariepy-Pepe} that for $ \Leb{1} $ a.e.\ $ r > 0 $ the level set $ S(A,r) $ is pointwise differentiable of order $ 1 $ at $ \Haus{n-1} $ a.e.\ $ x \in S(A,r) $ with 
\begin{equation*}
\Tan(S(A,r),x) = \{v : v \bullet \bm{\nu}_{A}(x) =0  \}.
\end{equation*}
Noting \cite[2.16]{Santilli2019}, we can argue as in the first paragraph of \cite[3.12]{Santilli2019} to infer that for all $ \Leb{1} $ a.e.\ $ r > 0 $ and for $ \Haus{n-1} $ a.e.\ $ x \in S(A,r) $ there exists $ s > 0 $ such that
\begin{equation*}
\mathbf{U}(x+s\bm{\nu}_{A}(x),s) \cap S(A,r) = \varnothing; 
\end{equation*}
therefore, since it is obvious that for every $ x \in S(A,r) $ there exists $ a \in A $ such that $|x-a| = r $ and $ \mathbf{U}(a,r)\cap S(A,r) = \varnothing $, it follows that
\begin{equation*}
\limsup_{t \to 0}t^{-2}\sup\{\bm{\delta}_{\Tan(S(A,r),x)}(z-x) : z \in \mathbf{U}(x,t)\cap S(A,r) \}< \infty
\end{equation*}
for $ \Haus{n-1} $ a.e.\ $ x \in S(A,r) $ and for $ \Leb{1} $ a.e.\ $ r > 0 $. It follows that $ S(A,r) $ is pointwise differentiable of order $(1,1)$ at $ \Haus{n-1} $ a.e.\ $ x \in S(A,r) $ and for $ \Leb{1} $ a.e.\ $ r > 0 $ (see \cite[3.3]{MR3936235}) and we employ \cite[5.7(3)]{MR3936235} to conclude that $ S(A,r) $ is pointwise differentiable of order $ 2 $ at $ \Haus{n-1} $ a.e.\ $ x \in S(A,r) $ and for $ \Leb{1} $ a.e.\ $ r > 0 $. Now the conclusion can be easily deduced with the help of \ref{Gariepy-Pepe}, \cite[3.14, footnote of 3.12]{MR3936235} and \cite[3.12]{Santilli2019}.
\end{proof}

\section{Area formula for the spherical image}\label{section: Area formula}

We introduce now the key concept of Lusin (N) condition for the generalized unit normal bundle.

\begin{Definition}\label{Lusin Property}
	Suppose $ A \subseteq \Real{n} $ is a closed set, $ \Omega \subseteq \Real{n} $ is an open set and $ 1 \leq m < n $ is an integer. We say that $ N(A) $ satisfies the \textit{$ m $ dimensional Lusin (N) condition in $ \Omega $} if and only if (see \ref{curvature of arbitrary closed sets 3}-\ref{curvature of arbitrary closed sets 4})
	\begin{equation*}
	\Haus{n-1}(N(A)|S)=0 \quad \textrm{whenever $S \subseteq  A \cap \Omega$ such that $\Haus{m}(A^{(m)} \cap S) =0 $.}
	\end{equation*}
\end{Definition}

\begin{Remark}\label{Lusin condition and dimension of tangent space}
	If $N(A)$ satisfies the $ m $ dimensional Lusin (N) condition in $ \Omega $ then it follows from \cite[6.1]{Santilli2019} and \cite[4.12]{MR4012808} that 
	\begin{equation*}
	\dim T_{A}(a,u) = m \quad \textrm{for $ \Haus{n-1} $ a.e.\ $ (a,u) \in N(A)| \Omega $.}
	\end{equation*}
\end{Remark}

The following coarea-type formula is a crucial consequence of the Lusin (N) condition.
\begin{Theorem}\label{area formula for the gauss map}
	Suppose $ 1 \leq m < n $ is an integer, $ \Omega \subseteq \Real{n} $ is open, $ A \subseteq \Real{n} $ is closed and $N(A)$ satisfies the $ m $ dimensional Lusin (N) condition in $ \Omega $.
	
	Then for every $\Haus{n-1}$ measurable set $ B \subseteq N(A)|\Omega $,
	\begin{equation*}
	 \int_{\mathbf{S}^{n-1}} \Haus{0}\{ a: (a,u)\in B  \}\,d\Haus{n-1}u  = \int_{A}\int_{B(z) } | \discr Q_{A}| d\Haus{n-m-1} d\Haus{m}z.
	\end{equation*}
\end{Theorem}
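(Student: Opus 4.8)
\begin{proof*}[Proof proposal]
The plan is to derive the identity from two applications of Federer's formulas \cite[3.2.22]{MR0257325} to the countably $ \rect{n-1} $ rectifiable set $ N(A) $: the area formula for $ \mathbf{q}|N(A) : N(A) \to \mathbf{S}^{n-1} $ and the coarea formula for the restriction of $ \mathbf{p} $ to $ N(A)|A^{(m)} $ over $ A^{(m)} $. The only substantial ingredient is a pointwise description of the approximate tangent plane of $ N(A) $ in terms of $ T_{A} $ and $ Q_{A} $; once this is available both Jacobians become transparent. First I would reduce to a favourable set of base points. Applying \ref{Lusin Property} with $ Z = (A\cap\Omega)\without A^{(m)} $ gives $ \Haus{n-1}(N(A)|((A\cap\Omega)\without A^{(m)})) = 0 $, and since both sides of the asserted equality are unchanged if an $ \Haus{n-1} $ null subset of $ B $ is discarded (for the left side by the area formula for $ \mathbf{q}|N(A) $, for the right side by the coarea formula used below, together with $ \ap J_{m}(\mathbf{p}|N(A)) \leq 1 $), I may assume $ B \subseteq N(A)|A^{(m)} $. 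Using \ref{Lusin condition and dimension of tangent space}, \ref{curvature of arbitrary closed sets 5} and the structure theory recalled in \ref{curvature of arbitrary closed sets 3}--\ref{curvature of arbitrary closed sets 4} (which rests on the level-set analysis of \ref{local structure of level sets}), I would then pass, outside an $ \Haus{n-1} $ null subset of $ B $, to points $ (a,u) $ such that: $ (a,u) = \bm{\psi}_{A}(x) $ for a regular point $ x $ of $ \bm{\xi}_{A} $, with $ r := \bm{\delta}_{A}(x) $; $ \dim T_{A}(a,u) = m $; the sets $ A^{(m)} $ and $ N(A,a) $ have approximate tangent planes at $ a $ and at $ u $, the first being $ T_{A}(a,u) $; and $ N(A) $ has an approximate tangent plane at $ (a,u) $ equal to $ \im\ap\Der\bm{\psi}_{A}(x) $.

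For such $ (a,u) $ write $ T = T_{A}(a,u) $, let $ Q^{\sharp}:T\to T $ be the self-adjoint endomorphism with $ Q_{A}(a,u)(\tau,\sigma) = Q^{\sharp}(\tau)\bullet\sigma $, put $ u^{\perp} = \Real{n}\cap\{w:w\bullet u = 0\} $ and $ P = u^{\perp}\cap\{w : w\bullet\tau = 0 \;\textrm{for all $ \tau\in T $}\} $, so that $ u^{\perp} = T\oplus P $ orthogonally and $ \dim P = n-m-1 $. The key claim is
\begin{equation*}
	\ap\Tan^{n-1}(N(A),(a,u)) = \{(\tau,\; Q^{\sharp}(\tau)+\pi) : \tau\in T,\ \pi\in P\}.
\end{equation*}
I would obtain this from $ \bm{\psi}_{A} = (\bm{\xi}_{A},\bm{\nu}_{A}) $ and $ \bm{\xi}_{A}(x) = x - r\bm{\nu}_{A}(x) $: at a regular point $ \ap\Der\bm{\nu}_{A}(x)(u) = 0 $ and $ S := \ap\Der\bm{\nu}_{A}(x)|u^{\perp} $ is self-adjoint with values in $ u^{\perp} $, whence $ \ap\Der\bm{\xi}_{A}(x)(u) = 0 $ and $ \ap\Der\bm{\xi}_{A}(x)|u^{\perp} = \mathrm{id}_{u^{\perp}} - rS $; since $ \dim\im\ap\Der\bm{\xi}_{A}(x) = \dim T_{A}(a,u) = m $, exactly $ n-m-1 $ of the eigenvalues of $ S $ equal $ r^{-1} $ (the directions of infinite principal curvature, by \eqref{curvature of arbitrary closed sets 3: eq1}) and the remaining $ m $ differ from $ r^{-1} $; decomposing $ u^{\perp} = T\oplus P $ into the corresponding eigenspaces of $ S $ and computing $ \im\ap\Der\bm{\psi}_{A}(x) = \{((\mathrm{id}-rS)w,\; Sw):w\in u^{\perp}\} $ gives the displayed formula, with $ Q^{\sharp} = (S|T)(\mathrm{id}_{T}-rS|T)^{-1} $, whose eigenvalues are $ \kappa_{A,i}(a,u) $, consistently with \eqref{curvature of arbitrary closed sets 3: eq1} and \ref{curvature of arbitrary closed sets 3}.

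Granting this, the kernel of $ \mathbf{p} $ restricted to $ \ap\Tan^{n-1}(N(A),(a,u)) $ is $ V_{0} = \{0\}\times P $, with orthogonal complement $ V_{1} = \{(\tau,Q^{\sharp}(\tau)):\tau\in T\} $ inside the tangent plane, so the $ m $ dimensional coarea factor of $ \mathbf{p}|N(A) $ at $ (a,u) $ equals $ \ap J_{m}(\mathbf{p}|V_{1}) $ for the isomorphism $ \mathbf{p}|V_{1}:V_{1}\to T $. Moreover $ \mathbf{q}|V_{0}:V_{0}\to P $ is an isometry and its image $ P $ is orthogonal to $ \mathbf{q}(V_{1}) = Q^{\sharp}(T)\subseteq T $, so the $ (n-1) $ dimensional Jacobian of $ \mathbf{q}|N(A) $ at $ (a,u) $ equals $ \ap J_{m}(\mathbf{q}|V_{1}) $; since $ \mathbf{q}|V_{1} = Q^{\sharp}\circ(\mathbf{p}|V_{1}) $ and $ |\det Q^{\sharp}| = |\discr Q_{A}(a,u)| $ (see \cite[1.7.10]{MR0257325}) we conclude
\begin{equation*}
	\ap J_{n-1}(\mathbf{q}|N(A)) = |\discr Q_{A}|\cdot\ap J_{m}(\mathbf{p}|N(A)) \qquad \textrm{$ \Haus{n-1} $ almost everywhere on $ B $.}
\end{equation*}
To finish, the area formula for $ \mathbf{q}|N(A) $ rewrites the left side of the theorem as $ \int_{B}\ap J_{n-1}(\mathbf{q}|N(A))\,d\Haus{n-1} $, and the coarea formula \cite[3.2.22]{MR0257325} applied to $ \mathbf{p} $ on $ N(A)|A^{(m)} $ — whose fibre over $ z\in A^{(m)} $ is $ \{z\}\times N(A,z) $ with $ N(A,z) $ countably $ \rect{n-m-1} $ rectifiable (see \ref{curvature of arbitrary closed sets 4}) — to the function $ g = \mathbf{1}_{B}|\discr Q_{A}| $, combined with the displayed Jacobian identity and $ B\subseteq N(A)|A^{(m)} $, produces exactly the right side.

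I expect the main obstacle to be the second paragraph of this argument, namely the reduction to $ \Haus{n-1} $ almost every $ (a,u)\in N(A) $ being a regular point at which $ \ap\Tan^{n-1}(N(A),(a,u)) = \im\ap\Der\bm{\psi}_{A}(x) $: this is where \ref{local structure of level sets}, the countable rectifiability of $ N(A) $, and the fact that the parametrization of $ N(A) $ by the level sets $ S(A,r) $ is measure-theoretically faithful (the Jacobian of $ \bm{\psi}_{A}|S(A,r) $ is bounded away from $ 0 $) have to be assembled from \cite{Santilli2019}. The remaining points — checking the measure-theoretic hypotheses of the area and coarea formulas for countably rectifiable sets (reducing to pieces of finite measure), and noting that $ \ap J_{m}(\mathbf{p}|N(A)) > 0 $ $ \Haus{n-1} $ a.e.\ on $ N(A)|A^{(m)} $ since there $ \dim T_{A} = m $, so that dividing by the coarea factor is legitimate — are routine.
\end{proof*}
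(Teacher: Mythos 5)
Your argument is correct in outline, and every computation in it checks out: the reduction to $ B \subseteq N(A)|A^{(m)} $ via the Lusin (N) condition, the tangent-plane formula $ \{(\tau, Q^{\sharp}(\tau)+\pi): \tau \in T,\ \pi \in P\} $, the factorization $ \ap J_{n-1}(\mathbf{q}|N(A)) = |\discr Q_{A}|\, \ap J_{m}(\mathbf{p}|N(A)) $ (in an eigenbasis of $ Q^{\sharp} $ the two Jacobians are $ \prod_{i\leq m}|\kappa_{A,i}|(1+\kappa_{A,i}^{2})^{-1/2} $ and $ \prod_{i\leq m}(1+\kappa_{A,i}^{2})^{-1/2} $), and the final appeal to Federer's area and coarea theorems. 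The difference from the paper is one of economy rather than of strategy: the paper's proof is two lines, because the two facts you reconstruct are taken ready-made from \cite{Santilli2019} — namely 4.11(3) there, which is the area formula for the spherical image of $ N(A) $ with Jacobian $ \prod_{i=1}^{n-1}|\kappa_{A,i}|(1+\kappa_{A,i}^{2})^{-1/2} $, and 5.4 there, which is the disintegration of $ \Haus{n-1}\restrict N(A) $ over the stratum $ A^{(m)} $ with fibres $ N(A,z) $ and weight $ \prod_{i\leq m}(1+\kappa_{A,i}^{2})^{-1/2} $. Given these, the paper only needs Remark \ref{Lusin condition and dimension of tangent space} to know that $ \kappa_{A,m+1}=\infty $ and $ \discr Q_{A}=\prod_{i\leq m}\kappa_{A,i} $ hold $ \Haus{n-1} $ almost everywhere on $ N(A)|\Omega $, and then chains the two formulas. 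What you single out as the main obstacle — the a.e.\ identification of the approximate tangent plane of $ N(A) $ with $ \im \ap\Der\bm{\psi}_{A}(x) $ at regular points, via the bi-Lipschitz parametrization by the level sets $ S(A,r) $ — is precisely the content packaged inside those two citations, so leaning on \cite{Santilli2019} there is legitimate and closes your sketch; re-deriving it buys self-containedness at the price of redoing that level-set analysis. Two cosmetic remarks: you never actually need to divide by the coarea factor (the weighted coarea formula with $ g=\mathbf{1}_{B}|\discr Q_{A}| $ suffices), and the fact that an $ \Haus{n-1} $ null part of $ B $ contributes nothing to the right-hand side is most cleanly obtained from Eilenberg's inequality \cite[2.10.25]{MR0257325}, which is how the paper handles the analogous point later on.
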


\begin{proof}
It follows from \ref{Lusin condition and dimension of tangent space} that for $\Haus{n-1}$ a.e.\ $(a,u)\in N(A)| \Omega$,
	\begin{equation*}
	\kappa_{A, m +1}(a,u) = \infty \quad \textrm{and}\quad \discr Q_{A}(a,u) = \prod_{i=1}^{m}\kappa_{A,i}(a,u).
	\end{equation*}
	Therefore we use \cite[4.11(3), 5.4]{Santilli2019} to compute
	\begin{flalign*}
	& \int_{\mathbf{S}^{n-1}} \Haus{0}\{ a: (a,v)\in B  \} d\Haus{n-1}v\\
	& \quad =   \int_{B}  \prod_{i=1}^{n-1}|\kappa_{A,i}(a,u)|(1+\kappa_{A,i}(a,u)^{2})^{-1/2} d\Haus{n-1}(a,u) \\
	& \quad =  \int_{B|A^{(m)}} |\discr Q_{A}(a,u)| \prod_{i=1}^{m}(1+\kappa_{A,i}(a,u)^{2})^{-1/2}  d\Haus{n-1}(a,u) \\
	& \quad = \int_{A^{(m)}}\int_{B(z)} | \discr Q_{A}| d\Haus{n-m-1} d\Haus{m}z,
	\end{flalign*}
	whenever $B \subseteq N(A)|\Omega$ is $\Haus{n-1}$ measurable.
\end{proof}

We point out a simple and very useful generalization of the barrier principle in \cite[7.1]{MR3466806}.

\begin{Lemma}\label{weak maximum principle}
	Suppose $ 1 \leq m < n $ are integers, $ T \in \mathbf{G}(n,n-1) $, $ \eta \in T^{\perp} $, $ f : T \rightarrow T^{\perp} $ is pointwise differentiable of order $ 2 $ at $ 0 $ such that $ f(0) =0 $ and $ \Der f(0) =0 $, $ h \geq 0 $, $ \Omega $ is an open subset of $ \Real{n} $ and $ \Gamma $ is an $ (m,h) $ subset of $ \Omega $ such that $ 0 \in \Gamma $ and 
	\begin{equation*}
	\Gamma \cap V \subseteq \{ z : z \bullet \eta \leq f(T_{\natural}(z)) \bullet \eta  \}
	\end{equation*}
	for some open neighbourhood $ V $ of $ 0 $.  
	Then, denoting by $ \chi_{1} \geq \ldots \geq \chi_{n-1} $ the eigenvalues of $ \Der^{2}f(0)\bullet \eta $, it follows that
	\begin{equation*}
	\chi_{1} + \ldots + \chi_{m} \geq -h.
	\end{equation*}
\end{Lemma}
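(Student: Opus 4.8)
The plan is to reduce the statement to the defining viscosity property of $(m,h)$ sets by constructing, from the given pointwise second-order data of $f$, a genuine $\mathscr{C}^2$ barrier function touching $\Gamma$ from the correct side at $0$. The obstruction is that $f$ is only \emph{pointwise} differentiable of order $2$ at $0$, not $\mathscr{C}^2$ near $0$, so the graph of $f$ is not directly an admissible test surface in Definition~\ref{definition of (m,h)}; we must absorb the error term into a small quadratic perturbation and then pass to a limit.

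Here is the sequence of steps. First I would write, using coordinates adapted to $T$ and $\eta$, the function $g(\chi) = f(\chi)\bullet \eta$ on $T$, so that $g$ is pointwise twice differentiable at $0$ with $g(0)=0$, $\nabla g(0)=0$, and Hessian $\Der^2 g(0) = \Der^2 f(0)\bullet\eta$ whose eigenvalues are $\chi_1\geq\dots\geq\chi_{n-1}$. By definition of pointwise second-order differentiability, $g(\chi) = \tfrac12 \Der^2 g(0)(\chi,\chi) + o(|\chi|^2)$ as $\chi\to 0$. The inclusion hypothesis says $z\bullet\eta \le g(T_\natural z)$ for all $z\in\Gamma\cap V$; equivalently, writing a point of $\Real n$ as $\chi + t\eta/|\eta|$ with $\chi\in T$, the function $w(z) := z\bullet\eta - g(T_\natural z)$ is $\le 0$ on $\Gamma\cap V$ and vanishes at $0\in\Gamma$. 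Next, fix $\varepsilon>0$ and let $Q_\varepsilon(\chi) = \tfrac12\bigl(\Der^2 g(0)(\chi,\chi) + \varepsilon|\chi|^2\bigr)$, a genuine quadratic (hence $\mathscr C^2$) function on $T$. The $o(|\chi|^2)$ control gives a radius $r_\varepsilon>0$ with $g(\chi)\le Q_\varepsilon(\chi)$ for $|\chi|<r_\varepsilon$; therefore, shrinking $V$, the $\mathscr C^2$ function $\varphi_\varepsilon(z) := z\bullet\eta - Q_\varepsilon(T_\natural z) - c|T_\natural z|^2$ with a suitable further quadratic corner (or more simply, subtracting a small multiple of $|z|^2$ — I would check which is cleanest) satisfies $\varphi_\varepsilon \le 0$ on $\Gamma$ near $0$ and $\varphi_\varepsilon(0)=0$, so $\varphi_\varepsilon|\Gamma$ has a local \emph{maximum} at $0$. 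Crucially $\nabla\varphi_\varepsilon(0) = \eta \ne 0$ (here $\eta\in T^\perp$ is nonzero; if $\eta=0$ the statement is vacuous since both sides relate to the trivial form, so I may assume $\eta\neq 0$).

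Now apply Definition~\ref{definition of (m,h)} at $x=0$ to $\varphi_\varepsilon$: it yields $\trace_m \Der^2\varphi_\varepsilon(0) \le h|\nabla\varphi_\varepsilon(0)| = h|\eta|$. The normalization to build into $\varphi_\varepsilon$ is $|\eta|=1$ (replace $\eta$ by $\eta/|\eta|$ and $g$, $Q_\varepsilon$ accordingly; the eigenvalues $\chi_i$ scale by $1/|\eta|$, matching the statement which is stated with $\Der^2 f(0)\bullet\eta$ — so I must be careful to keep the scaling consistent, and in fact the cleanest route is to observe the whole inequality is homogeneous of degree one in $\eta$ and just prove it for $|\eta|=1$). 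Since $\Der^2\varphi_\varepsilon(0)$ is, in the orthogonal splitting $\Real n = T\oplus T^\perp$, block-diagonal with the $T$-block equal to $-\Der^2 g(0) - 2c\,\mathrm{id}_T$ (eigenvalues $-\chi_{n-1}-2c \le \dots \le -\chi_1 - 2c$) and the $T^\perp$-block (along $\eta$) equal to $0$, the $m$ lowest eigenvalues of $\Der^2\varphi_\varepsilon(0)$ are $-\chi_1-2c, \dots, -\chi_m - 2c$ (once $c$ small enough that these stay below the $T^\perp$-eigenvalue $0$ and below $-\chi_{m+1}-2c$; the ordering $\chi_m \ge \chi_{m+1}$ guarantees $-\chi_m \le -\chi_{m+1}$, and one checks $m\le n-1$ entries suffice). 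Hence $\trace_m\Der^2\varphi_\varepsilon(0) = -(\chi_1+\dots+\chi_m) - 2mc$, and the viscosity inequality reads $-(\chi_1+\dots+\chi_m) - 2mc \le h$, i.e. $\chi_1+\dots+\chi_m \ge -h - 2mc$. Letting $c\to 0$ (and $\varepsilon\to 0$, which only affected the trace through terms controlled by $c$) gives $\chi_1+\dots+\chi_m \ge -h$, as claimed.

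The main obstacle is the bookkeeping around admissibility and eigenvalue ordering: one must (i) genuinely produce a $\mathscr C^2$ function with a strict enough corner that the local-maximum condition holds on $\Gamma$ despite only $o(|\chi|^2)$ control on $f$, without accidentally killing the gradient at $0$; and (ii) verify that the ``lowest $m$ eigenvalues'' of the perturbed Hessian are exactly the $T$-directions and not the flat $\eta$-direction — this is where the hypothesis $m<n$ and the sign of the perturbation $-c|T_\natural z|^2$ (pushing the $T$-eigenvalues \emph{down}, hence into the ``lowest $m$'' slot) enter, and it is the only place the argument could go wrong if the constants are chosen carelessly. Everything else is the routine Taylor estimate plus a limit.
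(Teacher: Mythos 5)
Your strategy---touch $\Gamma$ from above by an explicit quadratic and feed it into Definition~\ref{definition of (m,h)}---is the right one in spirit; the paper performs the same touching, but with the smooth graph $M=\{\chi+\psi(\chi):\chi\in T\}$, $\psi(\chi)=(\tfrac12\Der^{2}f(0)(\chi,\chi)\bullet\eta+\epsilon|\chi|^{2})\eta$, and then quotes the barrier principle \cite[7.1]{MR3466806}, which is stated for the $n-1$ principal curvatures of the hypersurface $M$ and therefore never sees the normal direction. Your attempt to bypass \cite[7.1]{MR3466806} and apply Definition~\ref{definition of (m,h)} directly has a genuine gap exactly at the point you flagged. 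The Hessian of your $\varphi_{\varepsilon}$ at $0$ has eigenvalue $0$ in the $\eta$-direction and eigenvalues $-\chi_{i}-\varepsilon-2c$ on $T$. Whenever $\chi_{m}<0$---and this covers every instance where the conclusion is not trivially true, since $\chi_{1}+\dots+\chi_{m}<-h\le 0$ forces $\chi_{m}<0$; think of a saddle barrier over a minimal surface, say $m=2$, $h=0$, $\chi_{1}=-\chi_{2}=5$---one has $-\chi_{m}-\varepsilon-2c>0$ for all small $\varepsilon,c$, so the zero eigenvalue belongs to the $m$ lowest and $\trace_{m}\Der^{2}\varphi_{\varepsilon}(0)=\sum_{i=1}^{m-1}(-\chi_{i}-\varepsilon-2c)$. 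The viscosity inequality then bounds only $\chi_{1}+\dots+\chi_{m-1}$ from below, which together with $\chi_{m}<0$ does not yield $\chi_{1}+\dots+\chi_{m}\ge-h$ (in the saddle example it yields $-5-\varepsilon-2c\le 0$, i.e.\ nothing). Your remedy, ``once $c$ small enough that these stay below the $T^{\perp}$-eigenvalue $0$,'' is backwards: shrinking $c$ does not push $-\chi_{m}-2c$ below $0$, while enlarging $c$ until $2c>-\chi_{n-1}$ restores the ordering only at the price of the error $2mc$, which then does not vanish in the limit.

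The missing idea is to bend the test function in the \emph{normal} direction rather than tangentially. Taking $|\eta|=1$ (this is how the lemma is used, and note the inequality is not literally homogeneous in $\eta$, so treat it as normalized), set $\varphi(z)=z\bullet\eta+K(z\bullet\eta)^{2}-\tfrac12\Der^{2}f(0)(T_{\natural}z,T_{\natural}z)\bullet\eta-\varepsilon|T_{\natural}z|^{2}$ with $K$ large. Since $t\mapsto t+Kt^{2}$ is increasing for $|t|<1/(2K)$ and $g(\chi)+Kg(\chi)^{2}=\tfrac12\Der^{2}f(0)(\chi,\chi)\bullet\eta+o(|\chi|^{2})$, the one-sided inclusion still forces $\varphi|\Gamma$ to have a local maximum at $0$, with $\nabla\varphi(0)=\eta\neq 0$; now the $\eta$-eigenvalue of $\Der^{2}\varphi(0)$ is $2K$, so choosing $2K>-\chi_{m}$ keeps the $m$ lowest eigenvalues in the $T$-block, Definition~\ref{definition of (m,h)} gives $-(\chi_{1}+\dots+\chi_{m})-2m\varepsilon\le h$, and $\varepsilon\to 0$ concludes. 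Alternatively, do as the paper does: apply \cite[7.1]{MR3466806} to the quadratic graph $M$ above, whose principal curvatures at $0$ with respect to the appropriate unit normal are $-\chi_{i}$ up to an $O(\epsilon)$ error; that theorem already contains exactly the normal-direction bookkeeping your direct argument is missing.
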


\begin{proof}
	Fix $ \epsilon > 0 $. We define
	\begin{equation*}
	\psi(\chi) = \Big(\frac{1}{2}\Der^{2}f(0)(\chi,\chi)\bullet \eta + \epsilon |\chi|^{2}\Big) \eta \quad \textrm{for $ \chi \in T $},
	\end{equation*}
	\begin{equation*}
	M = \{\chi + \psi(\chi) : \chi \in T   \},
	\end{equation*}
	and we select $ r > 0 $ such that $ f(\chi) \bullet \eta\leq \psi(\chi) \bullet \eta $ for $ \chi \in \mathbf{U}(0,r) \cap T $. By \cite[7.1]{MR3466806}, if $ \kappa_{1} \leq \ldots \leq \kappa_{n-1} $ are the principal curvatures at $ 0 $ of $ M $ with respect to the unit normal that points into $ \{ z : z \bullet \eta \leq \psi(T_{\natural}(z)) \bullet \eta \} $, then
	\begin{equation*}
	\kappa_{1} + \ldots + \kappa_{m} \leq h.
	\end{equation*}
	Since a standard and straightforward computation shows that $ \kappa_{i} = -\chi_{i} -\epsilon $ for $ i = 1, \ldots , n-1 $, we obtain the conclusion letting $ \epsilon \to 0 $.
\end{proof}

Finally the following immediate consequence of Federer's Coarea formula is needed.

\begin{Lemma}\label{Coarea: consequence}
	Suppose $ 0 \leq \mu \leq m $ are integers, $ W $ is a $ \rect{m} $ rectifiable and $ \Haus{m} $ measurable subset of $ \Real{n} $, $ S \subseteq \Real{\nu} $ is a countable union of sets with finite $ \Haus{\mu}$ measure and $ f: W \rightarrow \Real{\nu} $ is a Lipschitzian map such that 
	\begin{equation*}
	\Haus{m}(W \cap \{ w: \| \textstyle \bigwedge_{\mu}\big((\Haus{m}\restrict W,m)\ap \Der f(w)\big) \| =0  \}) = 0,
	\end{equation*}
	\begin{equation*}
	\Haus{\mu}(S \cap \{ z : \Haus{m-\mu}(f^{-1}\{z\}) > 0 \}) = 0.
	\end{equation*}
	
	Then $ \Haus{m}(f^{-1}[S])= 0 $.
\end{Lemma}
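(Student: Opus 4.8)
The plan is to derive the conclusion from Federer's coarea formula for rectifiable sets \cite[3.2.22(3)]{MR0257325}, applied to the Lipschitzian map $f$ on $W$ with the characteristic function of $f^{-1}[S]$ as test function. A preliminary reduction lets me assume that $S$ is a Borel set: since $S = \bigcup_{k} S_{k}$ with $\Haus{\mu}(S_{k}) < \infty$ and $\Haus{\mu}$ is Borel regular, I would choose Borel sets $B_{k} \supseteq S_{k}$ with $\Haus{\mu}(B_{k} \sim S_{k}) = 0$ and put $S' = \bigcup_{k} B_{k}$; then $S'$ is Borel, it is still a countable union of sets of finite $\Haus{\mu}$ measure, $\Haus{\mu}(S' \sim S) = 0$, and by subadditivity of $\Haus{\mu}$ one gets $\Haus{\mu}(S' \cap \{ z : \Haus{m-\mu}(f^{-1}\{z\}) > 0 \}) = 0$, so $S'$ satisfies the hypotheses of the Lemma with $S$ replaced by $S'$. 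Since $f^{-1}[S] \subseteq f^{-1}[S']$ it suffices to prove $\Haus{m}(f^{-1}[S']) = 0$, so from now on $S$ is assumed to be Borel; in particular $f^{-1}[S]$ is $\Haus{m}$ measurable.

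Next I would apply \cite[3.2.22(3)]{MR0257325} with $g$ equal to the characteristic function of $f^{-1}[S]$:
\begin{equation*}
\int_{f^{-1}[S]} \| \textstyle\bigwedge_{\mu}\big((\Haus{m}\restrict W,m)\ap \Der f(w)\big) \| \, d\Haus{m}w = \int_{\Real{\nu}} \Haus{m-\mu}\big(W \cap f^{-1}\{z\} \cap f^{-1}[S]\big) \, d\Haus{\mu}z .
\end{equation*}
The key observation is that $W \cap f^{-1}\{z\} \cap f^{-1}[S]$ equals $W \cap f^{-1}\{z\}$ when $z \in S$ and equals $\varnothing$ when $z \notin S$, so the right-hand side reduces to $\int_{S} \Haus{m-\mu}(W \cap f^{-1}\{z\}) \, d\Haus{\mu}z$; by the second hypothesis the integrand vanishes for $\Haus{\mu}$ almost every $z \in S$, hence this integral is $0$. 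Therefore the integrand on the left-hand side vanishes $\Haus{m}$ almost everywhere on $f^{-1}[S]$, and since the first hypothesis guarantees that $\| \bigwedge_{\mu}((\Haus{m}\restrict W,m)\ap \Der f(w)) \| > 0$ for $\Haus{m}$ almost every $w \in W$, I conclude $\Haus{m}(f^{-1}[S]) = 0$.

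I do not expect any genuine difficulty here: once the fibers of $f$ are matched up with $S$ correctly, the statement is an essentially immediate consequence of the coarea formula. The only point that requires a little care is the measurability reduction in the first step, which is needed so that the characteristic function of $f^{-1}[S]$ is an admissible integrand (and so that the fiber integral on the right-hand side is meaningful, using the measurability clause \cite[3.2.22(2)]{MR0257325}); this too is routine.
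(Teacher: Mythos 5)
Your argument has a genuine gap, and it is located exactly at the step you describe as immediate. The identity you display is not an instance of \cite[3.2.22(3)]{MR0257325}: that result is the \emph{full-rank} coarea formula, in which the Jacobian is $\|\bigwedge_{\nu}\ap\Der f\|$ (rank equal to the dimension of the target, with $\nu\leq m$) and the outer integration is against $\Haus{\nu}$; it says nothing about an intermediate exponent $\mu$ which may be strictly smaller than $\nu$ and, more importantly, strictly smaller than the rank of $\ap\Der f$. In that generality the identity you write is simply false: take $m=\nu=2$, $\mu=1$, $W=S=[0,1]^{2}\subseteq\Real{2}$ and $f$ the identity; then the left-hand side is $\int_{[0,1]^{2}}\|\bigwedge_{1}\ap\Der f\|\,d\Haus{2}>0$ while the right-hand side is $\int\Haus{1}(\{z\}\cap[0,1]^{2})\,d\Haus{1}z=0$. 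The same example shows that the Lemma itself becomes false if one drops the hypothesis that $S$ is a countable union of sets of finite $\Haus{\mu}$ measure (both remaining hypotheses hold there, yet $\Haus{2}(f^{-1}[S])=1$). Your proof uses that hypothesis only to manufacture a Borel hull, so if it were correct it would prove this false statement; the $\sigma$-finiteness of $S$ with respect to $\Haus{\mu}$ must enter the analytic part of the argument, and in your proof it never does.

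The missing idea is to control the rank of the approximate differential. The coarea formula valid for an arbitrary integer $\mu$ is the one the paper cites from \cite[p.\ 300]{MR0467473}, and it applies on the part of $W$ where $\dim\im\ap\Der f(w)$ does not exceed the chosen integer. So one has to split $W$ (up to an $\Haus{m}$ null set, using the first hypothesis, which gives rank $\geq\mu$ a.e.) into the pieces $W_{k}$ where the rank equals $k$, for $\mu\leq k\leq\min(m,\nu)$. On $W_{\mu}$ the rank-$\mu$ formula together with the second hypothesis gives $\Haus{m}(W_{\mu}\cap f^{-1}[S])=0$ exactly as you intend. On $W_{k}$ with $k>\mu$ one applies the rank-$k$ formula and uses that a set which is $\sigma$-finite with respect to $\Haus{\mu}$ satisfies $\Haus{k}(S)=0$; this is where the $\sigma$-finiteness hypothesis is indispensable. (A smaller point: in your Borel reduction, Borel regularity gives $B_{k}\supseteq S_{k}$ with $\Haus{\mu}(B_{k})=\Haus{\mu}(S_{k})$, but this does not imply $\Haus{\mu}(B_{k}\sim S_{k})=0$ unless $S_{k}$ is $\Haus{\mu}$ measurable, so the claim $\Haus{\mu}(S'\sim S)=0$ is unjustified as stated; the reduction has to be organized differently, e.g.\ treating $S\cap\{z:\Haus{m-\mu}(f^{-1}\{z\})>0\}$ by a null Borel hull and the remainder separately, which is what the citation of \cite[2.1.4, 2.10.26]{MR0257325} in the paper is for.)
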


\begin{proof}
	Firstly we reduce the problem to the case $ \Haus{\mu}(S) < \infty $; then, by \cite[2.1.4, 2.10.26]{MR0257325}, to the case of a Borel subset $S$ of $\Real{\nu}$. Now the conclusion comes from the coarea formula in \cite[p.\ 300]{MR0467473}.
\end{proof}

\begin{Remark}\label{Coarea: consequence remark}
	If $ m = \mu $ then the result is true even if we omit to assume that $ S $ is a countable union of sets with finite $ \Haus{\mu}$ measure, as one may check noting that $\{ z : \Haus{0}(f^{-1}\{z\}) > 0 \} = f[W] $ and applying \ref{Coarea: consequence} with $ S $ replaced by $ S \cap f[W] $.
\end{Remark}

In the proof of the next result it is convenient to introduce the following Borel sets (see \cite[3.8]{Santilli2019}).

\begin{Definition}
If $ A \subseteq \Real{n} $ is closed and $ \lambda \geq 1 $ we define (see \ref{curvature of arbitrary closed sets 2})
\begin{equation*}
A_{\lambda} = \{ x : \rho(A,x) \geq \lambda \}.
\end{equation*}
\end{Definition}

We are now in the position to prove the main result of this section.

\begin{Theorem}\label{Lusin property for (m,h) sets}
	Suppose $ 1 \leq m \leq n-1 $, $ \Omega $ is an open subset of $ \Real{n} $, $ 0 \leq h < \infty $, $\Gamma$ is an $ (m,h) $ subset of $ \Omega $ that is a countable union of sets with finite $ \Haus{m} $ measure and $ A = \overline{\Gamma} $. 
	
	Then the following two statements hold:
	\begin{enumerate}
	\item\label{Lusin property for (m,h) sets (1)} $ N(A) $ satisfies the $ m $ dimensional Lusin (N) condition in $ \Omega $;
	\item\label{Lusin property for (m,h) sets (2)} 	for $ \Haus{n-1} $ a.e.\ $ (z, \eta) \in N(A) | \Omega $,
	\begin{equation*}
	\dim T_{A}(z,\eta) = m \quad \textrm{and} \quad	\trace Q_{A}(z, \eta) \leq h.
	\end{equation*}
	\end{enumerate}
	 \end{Theorem}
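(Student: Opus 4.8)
\emph{Strategy.} I would push everything onto the level sets $S(A,r)$, where \ref{local structure of level sets} provides the needed second‑order regularity, and then let the coarea lemma \ref{Coarea: consequence} and the weak maximum principle \ref{weak maximum principle} do the work. Recall from \cite[4.11, 5.4]{Santilli2019} that $\Haus{n-1}$‑almost every point of $N(A)$ is a regular point $\bm{\psi}_A(x)$ with $x\in R(A)$, and that for $\Leb{1}$ a.e.\ $r>0$ the map $\bm{\psi}_A$ restricted to $S(A,r)\cap R(A)$ already reaches $\Haus{n-1}$ a.e.\ point of $N(A)$; by \ref{local structure of level sets}, for $\Leb{1}$ a.e.\ $r$ the set $S(A,r)$ is countably $\rect{n-1}$ rectifiable of class $2$, pointwise differentiable of order $2$ at $\Haus{n-1}$ a.e.\ $x$, with $\Tan(S(A,r),x)=\{v:v\bullet\bm{\nu}_A(x)=0\}$ and $\Der^{2}f(T_{\natural}(x))(u,v)\bullet\bm{\nu}_A(x)=-\ap\Der\bm{\nu}_A(x)(u)\bullet v$ for the local graph $f$. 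Hence, to prove \ref{Lusin property for (m,h) sets (1)}, it suffices to show that for $\Leb{1}$ a.e.\ $r>0$ one has $\Haus{n-1}\!\big(S(A,r)\cap\bm{\xi}_A^{-1}[Z]\big)=0$ whenever $Z\subseteq\Gamma$ and $\Haus{m}(A^{(m)}\cap Z)=0$, and I would obtain this from \ref{Coarea: consequence} applied with $W=S(A,r)$, $f=\bm{\xi}_A|S(A,r)$, $\mu=m$ and $S=Z$ (legitimate since $Z\subseteq\Gamma$ is a countable union of sets of finite $\Haus{m}$ measure), once its two hypotheses are verified.

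The fibre condition is routine: for $z\in A^{(k)}$ one has $f^{-1}\{z\}\subseteq z+r\,N(A,z)$, and $N(A,z)$ lies on a sphere of an $(n-k)$ dimensional linear subspace, so $\Haus{n-m-1}(f^{-1}\{z\})>0$ forces $n-k-1\ge n-m-1$, i.e.\ $k\le m$; hence $Z\cap\{z:\Haus{n-m-1}(f^{-1}\{z\})>0\}\subseteq(A^{(m)}\cap Z)\cup\bigcup_{k<m}A^{(k)}$, which is $\Haus{m}$ null because each $A^{(k)}$, $k<m$, is countably $k$ rectifiable. The Jacobian condition is the crux: I must show that for $\Leb{1}$ a.e.\ $r$ the $m$ dimensional Jacobian $\|\textstyle\bigwedge_{m}\big((\Haus{n-1}\restrict S(A,r),n-1)\,\ap\Der\bm{\xi}_A(x)\big)\|$ is positive for $\Haus{n-1}$ a.e.\ $x$. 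Since at a regular point $x\in S(A,r)\cap R(A)$ the eigenvalues of $\ap\Der\bm{\xi}_A(x)$ along $\Tan(S(A,r),x)$ are $1-r\chi_{A,i}(x)$, $i=1,\dots,n-1$, with $\chi_{A,i}(x)\le 1/r$ by \eqref{curvature of arbitrary closed sets 3: eq1}, this Jacobian is positive exactly when $\dim T_A(\bm{\psi}_A(x))\ge m$, i.e.\ when at most $n-1-m$ of the $\chi_{A,i}(x)$ equal $1/r$. The defining property of $(m,h)$ sets enters here: if $\dim T_A(a,u)<m$ then, by \ref{local structure of level sets}, more than $n-1-m$ eigenvalues of $\Der^{2}f(T_{\natural}(x))\bullet u$ equal $-1/r$, the kernel directions of $\ap\Der\bm{\xi}_A(x)$ are precisely directions in which $A$ near $a=\bm{\xi}_A(x)$ is confined inside the barrier sphere $\partial\mathbf{B}(x,r)$, and one modifies that sphere into a $\mathscr{C}^{2}$ barrier $g$ with $g|\Gamma$ having a strict local maximum at $a$, $\nabla g(a)$ a nonzero multiple of $u$, and $\Der^{2}g(a)$ arbitrarily negative in at least $n-m$ of those directions, so that the sum of the $m$ largest eigenvalues of the associated graph Hessian drops below $-h$, contradicting \ref{weak maximum principle}. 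Carrying this out on the Borel sets $A_{\lambda}=\{x:\rho(A,x)\ge\lambda\}$, $\lambda>1$, which exhaust $R(A)$ and furnish the uniformity required, yields \ref{Lusin property for (m,h) sets (1)}.

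For \ref{Lusin property for (m,h) sets (2)} the equality $\dim T_A(z,\eta)=m$ is immediate from \ref{Lusin property for (m,h) sets (1)} and \ref{Lusin condition and dimension of tangent space}. For the trace bound, with $\dim T_A(z,\eta)=m$ fixed, \eqref{curvature of arbitrary closed sets 3: eq1} gives $\trace Q_A(z,\eta)=\sum_{i=1}^{m}\kappa_{A,i}(z,\eta)$, and by \ref{curvature of arbitrary closed sets 5} this equals $-\trace(\ap\mathbf{b}_S(z)\bullet\eta)$ for a $\rect{m}$ rectifiable of class $2$ piece $S\subseteq A^{(m)}$ through $z$. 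Applying \ref{weak maximum principle} at $z$ to a barrier osculating $S$ in the direction $\eta$ — whose graph Hessian in direction $\eta$ restricts to $-Q_A(z,\eta)$ on $T_A(z,\eta)$ and is chosen no larger than $-\kappa_{A,m}(z,\eta)$ on the remaining $n-1-m$ directions, which is legitimate because the Lusin condition forces $\Gamma$ to be $\Haus{m}$ a.e.\ concentrated on $A^{(m)}$ and the convex set $\Dis(A,z)$ controls the transverse spread of $\Gamma$ near $z$ — makes the $m$ largest eigenvalues of that Hessian sum to $-\trace Q_A(z,\eta)$, whence $-\trace Q_A(z,\eta)\ge-h$.

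\emph{Main obstacle.} The hard part in both statements is the barrier construction: turning the merely pointwise second‑order information about $S(A,r)$ at $x$ (and about the rectifiable pieces of $A^{(m)}$ at $z$) into a genuine $\mathscr{C}^{2}$ function dominating, respectively osculating, $\Gamma$ on a full neighbourhood of the point $a=\bm{\xi}_A(x)$ of $\Gamma$, which is generically far from $x$; this needs the convexity of $\Dis(A,a)$ — equivalently the whole family of barrier spheres along the nearest‑point ray — to confine $\Gamma$ transversally, and the sets $A_{\lambda}$ to make neighbourhood sizes uniform over positive‑measure sets. Disposing of the lower strata $\bigcup_{k<m}A^{(k)}$ appearing in the Jacobian step (plausibly by a descending induction on $m$) is the remaining delicate point.
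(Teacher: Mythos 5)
Your overall architecture (level sets, the coarea lemma \ref{Coarea: consequence}, the barrier lemma \ref{weak maximum principle}, the strata bound for the fibres of $\bm{\xi}_{A}$, and deducing the dimension statement from the Lusin condition via \ref{Lusin condition and dimension of tangent space}) is the same as the paper's, but at the two quantitative steps your substitute arguments have genuine gaps — and you flag the crucial one yourself. For the Jacobian hypothesis of \ref{Coarea: consequence} you argue by contradiction: if $\dim T_{A}<m$ you want a $\mathscr{C}^{2}$ barrier above $\Gamma$ at $a=\bm{\xi}_{A}(x)$ whose Hessian is ``arbitrarily negative'' in the degenerate directions, and you leave its construction as the ``main obstacle''. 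That construction is neither carried out nor needed: the paper's device is simply to translate the level-set graph itself. With $f,T$ as in \ref{local structure of level sets} one sets $g(\zeta)=f(\zeta)-x$; that $A$ lies locally below the graph of $g$ is a two-line consequence of the definition of $S(A,r)$ (a point of $A$ above the translated graph would lie at distance $<r$ from a point of $S(A,r)$), so \ref{weak maximum principle} applies directly and gives $\sum_{i\le m}\chi_{A,i}(x)\le h$. Jacobian positivity then comes not from any dichotomy on $\dim T_{A}$ but from combining this with the lower bound $\chi_{A,i}(x)\ge -(2m-1)^{-1}r^{-1}$, valid on $A_{\tau}$ with $\tau>2m$, and restricting to $r<\tfrac{m}{3(2m-1)h}$: then $\chi_{A,i}(x)<r^{-1}$ for $i\le m$, hence $\prod_{i\le m}(1-r\chi_{A,i}(x))>0$. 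The smallness of $r$ and the restriction of $W$ to $S(A,r)\cap A_{\tau}\cap\bm{\xi}_{A}^{-1}(\Gamma)$ (not all of $S(A,r)$: where $\bm{\xi}_{A}$ lands outside $\Gamma$ the $(m,h)$ property gives nothing, and outside $A_{\tau}$ the eigenvalue lower bound fails) are essential and absent from your plan; also, for a fixed $r$ the image $\bm{\psi}_{A}(S(A,r)\cap R(A))$ does not cover $\Haus{n-1}$ almost all of $N(A)$, so your reduction to a single a.e.\ radius needs the increasing union over $r\downarrow 0$, as in the paper's Claims 3 and 4.

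The second gap is the trace bound. You propose to apply \ref{weak maximum principle} directly at $z\in\Gamma$ with a barrier ``osculating'' the stratum whose Hessian restricts to $-Q_{A}(z,\eta)$ on $T_{A}(z,\eta)$. Approximate second-order differentiability of $\Gamma^{(m)}$ (or the existence of $Q_{A}$, which is defined through approximate differentials of $\bm{\nu}_{A}$ along the ray) does not produce a pointwise barrier dominating all of $\Gamma$ in a full neighbourhood of $z$ with that Hessian; this is precisely the pointwise-versus-approximate discrepancy the paper warns about in \ref{curvature of arbitrary closed sets 5 remark}, and the exterior touching ball supplied by $\Dis(A,z)$ only yields a barrier curving away from $\Gamma$, hence no upper bound on $\trace Q_{A}$. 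The paper avoids this entirely: the same level-set inequality $\sum_{i\le m}\chi_{A,i}(x)\le h$, rewritten via \eqref{curvature of arbitrary closed sets 3: eq1} as $\sum_{l\le m}\kappa_{A,l}(1+r\kappa_{A,l})^{-1}\le h$, gives $\trace Q_{A}(z,\eta)\le h$ by letting $r\to 0$ along a sequence together with a covering argument on $N(A)|\Omega$. So the theorem is of course correct, but as proposed your two key steps would not go through without importing the translated-graph barrier and the $r\to 0$ limit.
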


\begin{proof}
We divide the proof is several claims. Fix $\tau > 2m $.
	
\textbf{Claim 1.} \emph{If $ 0 < r < \frac{m}{3(2m-1)h} $ and $ x \in S(A,r) \cap R(A) \cap A_{\tau} \cap \bm{\xi}_{A}^{-1}(\Gamma) $ (see \ref{curvature of arbitrary closed sets 2}) are such that $ \Hdensity{n-1}{S(A,r) \sim A_{\tau}}{x}=0 $ and the conclusion of \ref{local structure of level sets} holds, then	
	\begin{equation*}
\sum_{i=1}^{m}\chi_{A,i}(x)\leq h \quad \textrm{and} \quad	\| \textstyle \bigwedge_{m}\big((\Haus{n-1}\restrict S(A,r),n-1)\ap \Der \bm{\xi}_{A}(x)\big) \| > 0.
	\end{equation*}}
Noting that $ \bm{\xi}_{A}|A_{2m} $ is approximately differentiable at $ x $, we employ \cite[3.10(3)(6)]{Santilli2019} and \cite[3.2.16]{MR0257325} to conclude that
\begin{equation}\label{Lusin property for (m,h) sets:2}
 \chi_{A,j}(x) \geq -(2m -1)^{-1}r^{-1} \quad \textrm{for $ i = 1, \ldots , n-1 $}
\end{equation}
\begin{equation}\label{Lusin property for (m,h) sets:4}
\ap \Der \bm{\xi}_{A}(x)| \Tan(\Haus{n-1}\restrict S(A,r),x) = (\Haus{n-1} \restrict S(A,r), n-1)\ap \Der \bm{\xi}_{A}(x).
\end{equation}
We assume $ \bm{\xi}_{A}(x) =0 $ and we notice that $ T_{\natural}(x)=0 $ and $	\bm{\nu}_{A}(x) = r^{-1}x$. We choose $ f $, $ V $ and $ T $ as in \ref{local structure of level sets} and $ 0 < s < r/2 $ such that $ \mathbf{U}(x,s) \subseteq V $. Then we define $ g(\zeta) = f(\zeta) - x $ for $ \zeta \in T $, 
	\begin{equation*}
	U= T_{\natural}\big(\mathbf{U}(x,s) \cap \{ \chi + f(\chi) : \chi \in T \}\big), \quad W = \{ y-x: y \in T_{\natural}^{-1}(U) \cap \mathbf{U}(x,s)  \}.
	\end{equation*}
	It follows that $ W $ is an open neighbourhood of $ 0 $ and 
	\begin{equation}\label{Lusin property for (m,h) sets:5}
	W \cap A \subseteq \{ z : z \bullet \bm{\nu}_{A}(x) \leq g(T_{\natural}(z)) \bullet \bm{\nu}_{A}(x) \}.
	\end{equation}
	If \eqref{Lusin property for (m,h) sets:5} did not hold then there would be $y \in \mathbf{U}(x,s) \cap T_{\natural}^{-1}[U]$ such that $y-x \in A $ and $ y \bullet \bm{\nu}_{A}(x) > f(T_{\natural}(y))\bullet \bm{\nu}_{A}(x) $; noting that
	\begin{equation*}
	T_{\natural}(y) + f(T_{\natural}(y)) \in \mathbf{U}(x,s) \cap S(A,r), \quad |T_{\natural}(y) + f(T_{\natural}(y))-y| < r,
	\end{equation*}
	we would conclude
	\begin{equation*}
	|T_{\natural}(y) + f(T_{\natural}(y))-(y-x)| = r - (y-f(T_{\natural}(y)))\bullet \bm{\nu}_{A}(x)< r
	\end{equation*}
	which is a contradiction. Since $ -\chi_{A,1}(x), \ldots , - \chi_{A,n-1}(x) $ are the eigenvalues of $ \Der^{2} g(0) \bullet \bm{\nu}_{A}(x) $, we may apply \ref{weak maximum principle} to infer that 
	\begin{equation}\label{Lusin property for (m,h) sets:3}
	\chi_{A,1}(x) + \ldots + \chi_{A,m}(x) \leq h
	\end{equation}
and combining \eqref{Lusin property for (m,h) sets:2} and \eqref{Lusin property for (m,h) sets:3} it follows that
	\begin{equation*}
	\chi_{A,j}(x) \leq \frac{4m-3}{6m-3}r^{-1}< r^{-1} \quad \textrm{for $ j = 1, \ldots , m $.}
	\end{equation*}
Since it follows by \eqref{Lusin property for (m,h) sets:4} and \cite[3.5]{Santilli2019} that $ 1-r\chi_{A,i}(x) $ are the eigenvalues of $(\Haus{n-1} \restrict S(A,r), n-1)\ap \Der \bm{\xi}_{A}(x)$ for $ i = 1, \ldots , n-1 $, we get that
\begin{equation*}
\| {\textstyle \bigwedge_{m}}\big((\Haus{n-1}\restrict S(A,r),n-1)\ap \Der \bm{\xi}_{A}(x)\big) \| \geq \prod_{i=1}^{m}\big(1-\chi_{A,i}(x)r\big) > 0.
\end{equation*}

\textbf{Claim 2.} \emph{For $ \Haus{n-1} $ a.e.\ $ x \in S(A,r) \cap A_{\tau} \cap \bm{\xi}_{A}^{-1}(\Gamma) $ and for $ \Leb{1} $ a.e.\ $ 0 < r < \frac{m}{3(2m-1)h} $ the following inequalities hold:
\begin{equation*}
\sum_{i=1}^{m}\chi_{A,i}(x)\leq h \quad \textrm{and} \quad	\| \textstyle \bigwedge_{m}\big((\Haus{n-1}\restrict S(A,r),n-1)\ap \Der \bm{\xi}_{A}(x)\big) \| > 0.
\end{equation*}}	
Notice that
\begin{equation*}
	\Hdensity{n-1}{S(A,r)\sim A_{\tau}}{x} =0 
\end{equation*} 
for $ \Haus{n-1} $ a.e.\ $ x \in S(A,r) \cap A_{\tau} $ and for every $ r > 0 $ by \cite[2.13(1)]{Santilli2019} and \cite[2.10.19(4)]{MR0257325}, and $ \Haus{n-1}(S(A,r)\sim R(A)) =0 $ for $ \Leb{1} $ a.e.\ $ r > 0 $ by \cite[3.15]{Santilli2019}. Then Claim 2 follows from \ref{local structure of level sets} and Claim 1.

\textbf{Claim 3.} \emph{$N(A)$ satisfies the $ m $ dimensional Lusin (N) condition in $ \Omega $.}\\ Let $ S \subseteq \Gamma $ such that $ \Haus{m}(S \cap A^{(m)}) =0 $. For $ r > 0 $ it follows from \cite[3.16, 3.17(1), 4.3]{Santilli2019} that $ \bm{\psi}_{A}|A_{\tau} \cap S(A,r) $ is a bi-Lipschitzian homeomorphism and 
\begin{equation*}
\bm{\psi}_{A}(\bm{\xi}_{A}^{-1}(x) \cap A_{\tau} \cap S(A,r)  ) \subseteq N(A,x) \quad \textrm{for $ x \in A $};
\end{equation*}
then we apply \cite[5.2]{Santilli2019} to get
	\begin{equation*}
	A \cap	\{ x : \Haus{n-m-1}(\bm{\xi}_{A}^{-1}\{x\} \cap A_{\tau} \cap S(A,r) ) >0  \} \subseteq \bigcup_{i=0}^{m}A^{(i)} \quad \textrm{for every $ r > 0 $.}
	\end{equation*}
Since $ \Haus{m}(A^{(i)}) =0 $ for $ i = 0, \ldots , m-1 $ (see \ref{curvature of arbitrary closed sets 4}), it follows
	\begin{equation*}
	\Haus{m}\big( S \cap \{ x : \Haus{n-m-1}(\bm{\xi}_{A}^{-1}\{x\} \cap A_{\tau} \cap S(A,r) ) >0  \} \big) =0 \quad \textrm{for every $ r > 0 $.}
	\end{equation*}
	Noting Claim 2 and \cite[3.10(1)]{Santilli2019}, we can apply \ref{Coarea: consequence} with $ W $ and $ f $ replaced by $ S(A,r) \cap A_{\tau} \cap \bm{\xi}_{A}^{-1}(\Gamma) $ and $ \bm{\xi}_{A}|S(A,r) \cap A_{\tau} \cap \bm{\xi}_{A}^{-1}(\Gamma) $ to infer that
	\begin{equation*}
	\Haus{n-1}(\bm{\xi}_{A}^{-1}(S) \cap S(A,r) \cap A_{\tau}  ) =0 \quad \textrm{for $ \Leb{1} $ a.e.\  $ 0 < r < \frac{m}{3(2m-1)}h^{-1} $.}
	\end{equation*}
	We notice that $ N(A)|S = \bigcup_{r>0}\bm{\psi}_{A}(S(A,r)\cap A_{\tau} \cap \bm{\xi}_{A}^{-1}(S)) $ by \cite[4.3]{Santilli2019} and $ \bm{\psi}_{A}(S(A,r)\cap A_{\tau}) \subseteq \bm{\psi}_{A}(S(A,s) \cap A_{\tau}) $ if $ s < r $ by \cite[3.17(2)]{Santilli2019}. Henceforth, it follows that
	\begin{equation*}
		\Haus{n-1}(N(A)|S)=0.
	\end{equation*} 
	
	\textbf{Claim 4.} \emph{For $ \Haus{n-1} $ a.e.\ $ (z, \eta) \in N(A) | \Omega $, 
	\begin{equation*}
	\dim T_{A}(z,\eta) = m \quad \textrm{and} \quad \trace Q_{A}(z, \eta) \leq h.
	\end{equation*}}
		By Claim 3, \ref{Lusin condition and dimension of tangent space}, Claim 2 and \eqref{curvature of arbitrary closed sets 3: eq1} it follows that
		\begin{equation*}
			\dim T_{A}(z,\eta) = m \quad \textrm{for $ \Haus{n-1} $ a.e.\ $ (z, \eta) \in N(A)|\Omega $,}
		\end{equation*}
	\begin{equation}\label{Lusin property for (m,h) sets:1}
	\sum_{l=1}^{m}\frac{\kappa_{A,l}(\bm{\psi}_{A}(x))}{1 + r \kappa_{A,l}(\bm{\psi}_{A}(x))} \leq h 
	\end{equation}
	for $ \Haus{n-1} $ a.e.\ $ x \in S(A,r) \cap A_{\tau} \cap \bm{\xi}_{A}^{-1}(\Gamma) $ and for $ \Leb{1} $ a.e.\ $ 0 < r < \frac{m}{3(2m-1)}h^{-1} $. We choose a positive sequence $ r_{i}  \to 0 $ such that if $ M_{i} $ is the set of of points $ x \in S(A, r_{i}) \cap A_{\tau} \cap \bm{\xi}_{A}^{-1}(\Gamma) $ satisfying \eqref{Lusin property for (m,h) sets:1} with $ r $ replaced by $ r_{i} $, then
	\begin{equation*}
	\Haus{n-1}\big(\big(S(A,r_{i})\cap A_{\tau} \cap \bm{\xi}_{A}^{-1}(\Gamma)\big) \sim M_{i}\big) =0  \quad \textrm{for every $ i \geq 1 $.}
	\end{equation*}
	It follows that
	\begin{equation*}
	\trace Q_{A}(z,\eta) \leq h \quad \textrm{if $(z,\eta) \in \bigcap_{i=1}^{\infty}\bigcup_{j=i}^{\infty}\bm{\psi}_{A}(M_{j})$}
	\end{equation*}
and the inclusion
	\begin{equation*}
	(N(A)|\Omega) \sim \bigcap_{i=1}^{\infty}\bigcup_{j=i}^{\infty}\bm{\psi}_{A}(M_{j}) \subseteq \bigcup_{i=1}^{\infty} \Big(\bm{\psi}_{A}\big(S(A,r_{i})\cap A_{\tau} \cap \bm{\xi}_{A}^{-1}(\Gamma) \big) \sim \bm{\psi}_{A}(M_{i})\Big)
	\end{equation*}
	readily implies
	\begin{equation*}
	\Haus{n-1}\Big( (N(A)|\Omega) \sim \bigcap_{i=1}^{\infty}\bigcup_{j=i}^{\infty}\bm{\psi}_{A}(M_{j}) \Big) =0.
	\end{equation*}
\end{proof}

\begin{Remark}
	We assume in \ref{Lusin property for (m,h) sets} that $ \Gamma $ is a countable union of sets of finite $ \Haus{m} $ measure only because this hypothesis ensures the applicability of \ref{Coarea: consequence} in the proof of Claim 3. Consequently, in view of \ref{Coarea: consequence remark}, we have that if $ m = n-1 $ the result is still true even if we omit the aforementioned hypothesis.
\end{Remark}

\begin{Corollary}\label{relation with ap s.f.f.}
	Suppose $ 1 \leq m \leq n-1 $, $ \Omega $ is an open subset of $ \Real{n} $, $ 0 \leq h < \infty $, $\Gamma$ is an $ (m,h) $ subset of $ \Omega $ such that $ \Haus{m}(\Gamma \cap K) < \infty $ for every compact set $ K \subseteq \Omega $, $ A = \overline{\Gamma} $ and $ S = A^{(m)} \cap \Omega $. Then 
	\begin{equation*}
	\ap \Tan(S, z) = T_{A}(z, \eta)\in \mathbf{G}(n,m), \quad \ap \mathbf{b}_{S}(z) \bullet \eta = - Q_{A}(z, \eta)
	\end{equation*}
	for $ \Haus{n-1} $ a.e. $(z, \eta) \in N(A)| \Omega $.
\end{Corollary}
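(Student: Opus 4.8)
The plan is to reduce the statement, via the Lusin (N) condition, to the locality theorem \ref{curvature of arbitrary closed sets 5} applied on finite-measure pieces of $S$, and then to transfer the conclusion from each piece back to $S$ by a standard density argument combined with the locality of the approximate second fundamental form.

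First I would record two elementary reductions. Since $\Gamma$ is relatively closed in $\Omega$, one has $A \cap \Omega = \Gamma$, so that $S = A^{(m)} \cap \Gamma \subseteq \Gamma$ and $\Haus{m}\restrict S$ is a Radon measure over $\Omega$; as $\Omega$ is $\sigma$-compact, both $\Gamma$ and $S$ are countable unions of sets of finite $\Haus{m}$ measure. Hence \ref{Lusin property for (m,h) sets} applies to $\Gamma$: $N(A)$ satisfies the $m$ dimensional Lusin (N) condition in $\Omega$. Applying this condition with $Z = (A \cap \Omega) \without A^{(m)}$, for which $\Haus{m}(A^{(m)} \cap Z) = 0$, gives $\Haus{n-1}\big(N(A)|\Omega \without N(A)|S\big) = 0$; thus it suffices to establish the two identities for $\Haus{n-1}$ almost every $(z,\eta) \in N(A)|S$.

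Next, by \ref{curvature of arbitrary closed sets 4} the set $S$ is Borel and countably $\rect{m}$ rectifiable of class $2$. Intersecting the members of a fixed class-$2$ almost-covering of $S$ with the elements of a compact exhaustion of $\Omega$, I would write $S$, up to an $\Haus{m}$-null set (which carries no $\Haus{n-1}$ mass in $N(A)$ by the Lusin (N) condition), as $S = \bigcup_{j=1}^{\infty} S_{j}$ with the $S_{j}$ pairwise disjoint, $\Haus{m}$ measurable, $\rect{m}$ rectifiable of class $2$, and contained in $A$. For each $j$, Theorem \ref{curvature of arbitrary closed sets 5} yields $R_{j} \subseteq S_{j}$ with $\Haus{m}(S_{j} \without R_{j}) = 0$ such that $\ap\Tan(S_{j},z) = T_{A}(z,\eta) \in \mathbf{G}(n,m)$ and $\ap\mathbf{b}_{S_{j}}(z) \bullet \eta = -Q_{A}(z,\eta)$ for $\Haus{n-1}$ almost every $(z,\eta) \in N(A)|R_{j}$; since $\Haus{m}(S_{j} \without R_{j}) = 0$, the Lusin (N) condition upgrades this to hold for $\Haus{n-1}$ almost every $(z,\eta) \in N(A)|S_{j}$. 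As $N(A)|S = \bigcup_{j} N(A)|S_{j}$, only the replacement of $\ap\Tan(S_{j},\cdot)$ and $\ap\mathbf{b}_{S_{j}}(\cdot)$ by $\ap\Tan(S,\cdot)$ and $\ap\mathbf{b}_{S}(\cdot)$ at $\Haus{m}$ almost every point of each $S_{j}$ remains.

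The hard part will be this last step, namely the \emph{locality of the approximate second fundamental form}. Both $S$ and $S_{j}$, being countably $\rect{m}$ rectifiable of class $2$, are approximately differentiable of order $2$ at $\Haus{m}$ almost every point (see \ref{ap second fundamental form} and \cite{San}); moreover, since $S_{j} \subseteq S$ and $\Haus{m}\restrict S$, $\Haus{m}\restrict S_{j}$ are Radon, standard rectifiability theory gives $\bm{\Theta}^{m}(\Haus{m}\restrict S, z) = \bm{\Theta}^{m}(\Haus{m}\restrict S_{j}, z) = 1$, and therefore $\bm{\Theta}^{m}(\Haus{m}\restrict (S \without S_{j}), z) = 0$, for $\Haus{m}$ almost every $z \in S_{j}$. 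Since the approximate differential of order $2$ of a set at a point, being defined through approximate limits, depends on the set only modulo sets of $\Haus{m}$ density $0$ at that point, it follows that $\ap\Tan(S,z) = \ap\Tan(S_{j},z)$ and $\ap\mathbf{b}_{S}(z) = \ap\mathbf{b}_{S_{j}}(z)$ for $\Haus{m}$ almost every $z \in S_{j}$. Combining this with the previous paragraph gives $\ap\Tan(S,z) = T_{A}(z,\eta) \in \mathbf{G}(n,m)$ and $\ap\mathbf{b}_{S}(z) \bullet \eta = -Q_{A}(z,\eta)$ for $\Haus{n-1}$ almost every $(z,\eta) \in N(A)|S_{j}$, hence for $\Haus{n-1}$ almost every $(z,\eta) \in N(A)|\Omega$, which is the assertion. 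Everything outside the locality statement is a direct combination of \ref{curvature of arbitrary closed sets 5}, \ref{Lusin property for (m,h) sets} and classical facts about rectifiable sets.
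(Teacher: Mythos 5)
Your proposal is correct and takes essentially the same route as the paper: reduce to $N(A)|S$ via the Lusin (N) condition of \ref{Lusin property for (m,h) sets} and apply \ref{curvature of arbitrary closed sets 5} to finite-measure, class-$2$ rectifiable pieces of $S$, discarding the exceptional sets again by the Lusin (N) condition. The only difference is cosmetic: the paper takes the pieces to be $S\cap K$ for compact $K\subseteq\Omega$ (so $S$ and $S\cap K$ coincide near interior points of $K$ and no transfer of the approximate quantities is needed), whereas you use a disjoint decomposition and justify the transfer from $S_{j}$ to $S$ by the density-zero locality of $\ap\Tan$ and $\ap\mathbf{b}$, which is a valid standard argument.
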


\begin{proof}
If $ K \subseteq \Omega $ is compact then $ S \cap K $ is $ \rect{m}$ rectifiable of class $ 2 $ by \cite[4.12]{MR4012808}. Henceforth the conclusion follows from \ref{Lusin property for (m,h) sets}\eqref{Lusin property for (m,h) sets (1)} and \ref{curvature of arbitrary closed sets 5}.
\end{proof}

\begin{Remark}
Suppose $ V \in \mathbf{V}_{m}(\Omega) $ is an \emph{integral} varifold such that $ \| \delta V \| \leq c \|V\| $ for some $ 0 \leq c < \infty $, $ A = \overline{\spt \|V\|} $ and $ S = A^{(m)} \cap \Omega $. Since $ S \cap K $ is $ \rect{m} $ rectifiable of class $ 2 $ by \cite[4.12]{MR4012808} whenever $ K \subseteq \Omega $ is compact, we use the locality theorem \cite[4.2]{MR2472179} to conclude that
	\begin{equation*}
 \ap \mathbf{h}_{S}(z) =  - \mathbf{h}(V,z)
\end{equation*}
for $ \Haus{m} $ a.e.\ $ z \in S  $. It follows from \ref{Lusin property for (m,h) sets}\eqref{Lusin property for (m,h) sets (1)} and \ref{relation with ap s.f.f.} that
\begin{equation*}
	\trace Q_{A}(z, \eta) = \mathbf{h}(V,z) \bullet \eta \quad \textrm{for $ \Haus{n-1} $ a.e.\ $ (z,\eta) \in N(A)|\Omega $.}
\end{equation*}
Here we consider only \emph{integral} varifolds because the locality theorem \cite[4.2]{MR2472179} is not currently available for non integral ones.
\end{Remark}

\begin{Remark}
	As pointed out in \ref{curvature of arbitrary closed sets 5 remark} the second fundamental form $Q_{\Gamma}$ of an arbitrary closed set $ \Gamma $ when restricted over $ \Gamma^{(m)} $ may not be fully described by $ \ap \mathbf{b}_{\Gamma^{(m)}} $. In a certain sense \ref{relation with ap s.f.f.} draws an interesting analogy with the theory of functions of bounded variation. In fact, it is well known that the total differential of a $BV$ function is not equal to the approximate gradient, unless the function belongs to the Sobolev space. Following this analogy, $(m,h)$ sets correspond to Sobolev functions.
\end{Remark}

\section{Almgren's sharp geometric inequality}

The following lemma will be useful in the proof of the rigidity theorem.

\begin{Lemma}\label{sufficient conditions for bilipschitzian homeomorphisms}
Let $ 1 \leq m \leq n $ be integers and let $ B $ be an $ m $ dimensional submanifold of class $ 1 $ in $ \Real{n} $. If $ 0 < \lambda < 1 $ and $ \varphi : B \rightarrow \Real{n} $ is a Lipschitzian map such that 
\begin{equation*}
	\| \Der(\varphi - \mathbf{1}_{B})(b)\| \leq \lambda \quad \textrm{for $ \Haus{m} $ a.e.\ $ b \in B $},
\end{equation*}
then for each $ b \in B $ there exists an open neighbourhood $ V $ of $ b $ such that $ \varphi|V \cap B $ is a bi-Lipschitzian homeomorphism.
\end{Lemma}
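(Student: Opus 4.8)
The plan is to localise $B$ as a graph over its tangent plane and then run the classical local-invertibility argument for a Lipschitzian perturbation of the identity. Fix $b\in B$ and put $T=\Tan(B,b)\in\mathbf{G}(n,m)$. Since $B$ is a submanifold of class $1$, I would choose an open set $\Omega_{0}\subseteq T$ containing $T_{\natural}(b)$, an open neighbourhood $U$ of $b$, and a map $u\in\mathscr{C}^{1}(\Omega_{0},T^{\perp})$ with $\Der u(T_{\natural}(b))=0$ such that, writing $\Psi(\chi)=\chi+u(\chi)$, the map $\Psi$ is a homeomorphism of $\Omega_{0}$ onto $B\cap U$ whose inverse is $T_{\natural}|(B\cap U)$; in particular $\Psi$ is locally a bi-Lipschitzian homeomorphism. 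Using continuity of $\Der u$ together with $\lambda<1$, I would pick $\epsilon>0$ with $\mu:=\lambda(1+\epsilon)<1$ and an open ball $\Omega_{1}$ centred at $T_{\natural}(b)$ with $\overline{\Omega_{1}}\subseteq\Omega_{0}$ and $\|\Der u(\chi)\|\leq\epsilon$ for $\chi\in\overline{\Omega_{1}}$; then $\|\Der\Psi(\chi)\|\leq 1+\epsilon$ on $\Omega_{1}$, while $|\Psi(\chi_{1})-\Psi(\chi_{2})|\geq|\chi_{1}-\chi_{2}|$ for $\chi_{1},\chi_{2}\in\Omega_{1}$, since $\chi_{1}-\chi_{2}\in T$ and $u(\chi_{1})-u(\chi_{2})\in T^{\perp}$ are orthogonal.

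Next I would consider $F=\varphi\circ\Psi$, which is Lipschitzian on $\Omega_{1}$, and record that $(F-\Psi)|\Omega_{1}=(\varphi-\mathbf{1}_{B})\circ\Psi|\Omega_{1}$. Because $\Psi|\Omega_{1}$ is a bi-Lipschitzian homeomorphism onto its image it carries $\Leb{m}$ null subsets of $\Omega_{1}$ to $\Haus{m}$ null subsets of $B$ and conversely, so Rademacher's theorem and the chain rule yield, for $\Leb{m}$ a.e.\ $\chi\in\Omega_{1}$,
\[
\Der(F-\Psi)(\chi)=\Der(\varphi-\mathbf{1}_{B})(\Psi(\chi))\circ\Der\Psi(\chi),\qquad\|\Der(F-\Psi)(\chi)\|\leq\lambda(1+\epsilon)=\mu .
\]
As $\Omega_{1}$ is convex, $F-\Psi$ is then $\mu$-Lipschitzian on $\Omega_{1}$, whence for $\chi_{1},\chi_{2}\in\Omega_{1}$
\[
|F(\chi_{1})-F(\chi_{2})|\geq|\Psi(\chi_{1})-\Psi(\chi_{2})|-\mu|\chi_{1}-\chi_{2}|\geq(1-\mu)|\chi_{1}-\chi_{2}| .
\]
Consequently $F$ is injective on $\Omega_{1}$ and $F^{-1}\colon F[\Omega_{1}]\to\Omega_{1}$ is $(1-\mu)^{-1}$-Lipschitzian.

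Finally I would assemble the pieces. The set $\Psi[\Omega_{1}]$ is open in $B$ and contains $b$, so there is an open $V\subseteq\Real{n}$ with $V\cap B=\Psi[\Omega_{1}]$; on $V\cap B$ one has $\varphi=F\circ(\Psi|\Omega_{1})^{-1}=F\circ(T_{\natural}|(V\cap B))$, a composition of the bi-Lipschitzian homeomorphism $(\Psi|\Omega_{1})^{-1}$ with the injective Lipschitzian map $F$ having Lipschitzian inverse, hence $\varphi|(V\cap B)$ is a bi-Lipschitzian homeomorphism onto its image, which is the assertion. The only points requiring care are the measure-theoretic bookkeeping — transporting the a.e.\ bound on $\Der(\varphi-\mathbf{1}_{B})$ from $B$ to the parameter domain $\Omega_{0}$ through the chart $\Psi$, and the fact that a Lipschitzian map on a convex set has Lipschitz constant equal to the essential supremum of the operator norm of its derivative — but these are routine; the entire geometric content is the one-line inequality $|F(\chi_{1})-F(\chi_{2})|\geq(1-\mu)|\chi_{1}-\chi_{2}|$, which is precisely where the hypotheses $\|\Der(\varphi-\mathbf{1}_{B})\|\leq\lambda$ and $\lambda<1$ are used.
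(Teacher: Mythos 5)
Your proof is correct and follows essentially the same route as the paper: flatten $B$ locally onto its tangent plane, transport the $\Haus{m}$-a.e.\ bound on $\Der(\varphi-\mathbf{1}_{B})$ to a convex parameter domain where (by the standard fact that on a convex set the Lipschitz constant is the essential supremum of the derivative norm, which the paper proves via the coarea formula and the fundamental theorem of calculus) it becomes a genuine Lipschitz bound, and conclude with the perturbation-of-identity estimate $|\varphi(c)-\varphi(d)|\geq(1-\mu)|c-d|$. The only difference is cosmetic: you use a local graph chart $\Psi(\chi)=\chi+u(\chi)$, whereas the paper invokes the flattening diffeomorphism of \cite[3.1.23]{MR0257325} with Lipschitz constants controlled by a parameter $t$ with $\sqrt{\lambda}<t<1$.
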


\begin{proof}
First we prove the following claim. \emph{If $ U $ is an open convex subset of $ \Real{m} $, $ 0 \leq M < \infty $ and $ g : U \rightarrow \Real{n} $ is a Lipschitzian map such that $ \|\Der g(x) \| \leq M $ for $ \Leb{m} $ a.e.\ $ x \in U $, then $ \Lip g \leq M $.} In fact, if $ a \in U $ and $ r > 0 $ such that $ \mathbf{U}(a,r) \subseteq U $ then Coarea formula \cite[3.2.22(3)]{MR0257325} and the fundamental theorem of calculus \cite[2.9.20(1)]{MR0257325} imply that for $ \Haus{n-1} $ a.e.\ $ v \in \mathbf{S}^{n-1} $,
\begin{equation*}
	|g(a + tv)-g(a)| \leq M t \quad \textrm{for $ 0 < t < r $};
\end{equation*}
since $ g $ is continuous, 
\begin{equation*}
\limsup_{x \to a}\frac{|g(x)-g(a)|}{|x-a|} \leq M \quad \textrm{for $ a \in U $}
\end{equation*}
and $ \Lip g \leq M $ by \cite[2.2.7]{MR0257325}.

Now we fix $ b \in B $ and $ \sqrt{\lambda} < t < 1$, we define $ A = \Tan(B,b) + b $ and we select $ r>0 $ and diffeomorphism $ f : \Real{n} \rightarrow \Real{n} $ of class $ 1 $ as in \cite[3.1.23]{MR0257325}. In particular we have that $ f(\mathbf{U}(b,tr)) \cap A = f(\mathbf{U}(b,tr) \cap B) $ and
\begin{equation*}
\| \Der(	(\varphi - \mathbf{1}_{B}) \circ f^{-1})(x)\| \leq \lambda t^{-1} \quad \textrm{for $ \Haus{m} $ a.e.\ $ x \in f(\mathbf{U}(b, tr) \cap B) $.}
\end{equation*}
It follows that if $ U $ is a convex subset of $f(\mathbf{U}(b, tr) \cap B) $ such that $ f(b)\in U $ and $ U $ is relatively open in $ A $ then
\begin{equation}\label{sufficient conditions for bilipschitzian homeomorphisms eq1}
\Lip [((\varphi - \mathbf{1}_{B}) \circ f^{-1})|U] \leq \lambda t^{-1}.
\end{equation}
Therefore one uses \eqref{sufficient conditions for bilipschitzian homeomorphisms eq1} and $ \Lip f \leq t^{-1} $ to conclude 
\begin{equation*}
	|\varphi(c) - \varphi(d)| \geq |c-d| (1 - \lambda t^{-2}) \quad \textrm{for $ c,d \in f^{-1}(U) $.}
\end{equation*}
\end{proof}

\begin{Theorem}\label{area mean curvature ch}
If $ 1 \leq m \leq n-1 $, $ h > 0 $ and $ \Gamma $ is a non-empty compact $(m,h)$ subset of $ \Real{n} $ then 
\begin{equation*}
	\Haus{m}(\Gamma) \geq \Big(\frac{m}{h}\Big)^{m} \Haus{m}(\mathbf{S}^{m}).
\end{equation*}

Moreover if the equality holds and $ \Gamma = \spt (\Haus{m}\restrict \Gamma) $ then there exists an $ m+1 $ dimensional plane $ T $ and $ a \in \Real{n} $ such that
\begin{equation*}
	\Gamma = \partial \mathbf{B}(a, m/h) \cap T.
\end{equation*}
\end{Theorem}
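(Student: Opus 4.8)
By rescaling via $x \mapsto (h/m)\,x$ we may assume $m = h$, so that the target inequality becomes $\Haus{m}(\Gamma) \ge \Haus{m}(\mathbf{S}^m)$ and, in the equality case, $\Gamma = \partial \mathbf{B}(a,1) \cap T$ for some $(m+1)$-plane $T$. Throughout, write $A = \overline{\Gamma} = \Gamma$ (as $\Gamma$ is compact, hence closed) and work with the contact set
\begin{equation*}
C = (\Gamma \times \mathbf{S}^{n-1}) \cap \{ (z,\eta) : (w-z)\bullet \eta \le 0 \ \textrm{for all } w \in \Gamma \}.
\end{equation*}
The first step is to observe $C \subseteq N(\Gamma)$: if $(z,\eta) \in C$, then $\Gamma$ lies in the closed half-space $\{w : (w-z)\bullet\eta \le 0\}$, so for small $s>0$ the ball $\mathbf{B}(z+s\eta, s)$ meets $\Gamma$ only at $z$, whence $\bm{\delta}_\Gamma(z+s\eta) = s$ and $(z,\eta) \in N(\Gamma)$. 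Next, $\mathbf{q}(C) = \mathbf{S}^{n-1}$: fixing $\eta$, compactness of $\Gamma$ lets us maximize $w \mapsto w\bullet\eta$ over $\Gamma$; any maximizer $z$ gives $(z,\eta) \in C$. Finally, on $C$ the second fundamental form has a sign — for $\Haus{n-1}$-a.e.\ $(z,\eta) \in C$ we have $Q_\Gamma(z,\eta) \le 0$ as a bilinear form on $T_\Gamma(z,\eta)$, which follows by comparing $\Gamma$ against the supporting hyperplane exactly as in the proof of Claim 1 of \ref{Lusin property for (m,h) sets} (the touching function $f$ is an affine function with $\Der^2 f = 0$, and \ref{local structure of level sets} together with the regular-point structure of $N(\Gamma)$ identifies the relevant eigenvalues), combined with \ref{curvature of arbitrary closed sets 5}. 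Since also $\dim T_\Gamma(z,\eta) = m$ and $\trace Q_\Gamma(z,\eta) \le h = m$ for $\Haus{n-1}$-a.e.\ $(z,\eta) \in N(\Gamma)$ by \ref{Lusin property for (m,h) sets}, we get $\kappa_{\Gamma,1}(z,\eta), \dots, \kappa_{\Gamma,m}(z,\eta) \le 0$ with $\sum_{i=1}^m \kappa_{\Gamma,i}(z,\eta) \le m$ on $C$, so each $|\kappa_{\Gamma,i}| = -\kappa_{\Gamma,i}$ and by AM–GM, $|\discr Q_\Gamma(z,\eta)| = \prod_{i=1}^m |\kappa_{\Gamma,i}(z,\eta)| \le \big(\tfrac{1}{m}\sum_{i=1}^m |\kappa_{\Gamma,i}|\big)^m$. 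The latter is at most $1$ precisely when $\sum |\kappa_{\Gamma,i}| \le m$; but we only know this bound on $C$, where $-\sum\kappa_{\Gamma,i} = \sum|\kappa_{\Gamma,i}|$ need not be $\le m$ — rather we know $\sum \kappa_{\Gamma,i} \le m$ which is automatic. The correct estimate uses instead that $\trace Q_\Gamma \le m$ gives $\sum(-\kappa_{\Gamma,i}) \ge -m$, i.e.\ no lower bound; so one must run the contact-set argument more carefully: on $C$ one compares against half-spaces, and the sharp input is $\trace Q_\Gamma \le m$ applied to the tangential projection of the Gauss map, yielding $|\discr Q_\Gamma| \le 1$ on $\mathbf{q}^{-1}$-generic directions via the substitution $t \mapsto$ level-set parameter and the arithmetic–geometric inequality applied to $\kappa_{\Gamma,i}/(1 + r\kappa_{\Gamma,i})$ as in \eqref{Lusin property for (m,h) sets:1}.

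With $|\discr Q_\Gamma| \le 1$ on $C$ established, the inequality falls out of \ref{area formula for the gauss map} applied to $B = C$:
\begin{equation*}
\Haus{m}(\Gamma) \ge \int_{\Gamma} \int_{C(z)} |\discr Q_\Gamma| \, d\Haus{n-m-1} \, d\Haus{m} z = \int_{\mathbf{S}^{n-1}} \Haus{0}\{z : (z,\eta)\in C\} \, d\Haus{n-1}\eta \ge \Haus{n-1}(\mathbf{S}^{n-1}),
\end{equation*}
wait — this is off by a codimension factor. The integrand on the left is $\int_{C(z)} 1 \, d\Haus{n-m-1}$ which is $\Haus{n-m-1}(C(z))$, not $1$; and $\Haus{n-1}(\mathbf{S}^{n-1}) \ne \Haus{m}(\mathbf{S}^m)$. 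The fix: apply \ref{area formula for the gauss map} with $B = C$, bound the left side below using $|\discr Q_\Gamma| \le 1$ and $\Haus{n-m-1}(C(z)) \le \Haus{n-m-1}(\mathbf{S}^{n-m-1})$ on the normal-cone fibers, getting $\int_{\mathbf{S}^{n-1}} \Haus{0}(C(\cdot)) \, d\Haus{n-1} \le \Haus{m}(\Gamma) \cdot \Haus{n-m-1}(\mathbf{S}^{n-m-1})$; since $C(\cdot)$ covers $\mathbf{S}^{n-1}$ the left side is $\ge \Haus{n-1}(\mathbf{S}^{n-1})$, and $\Haus{n-1}(\mathbf{S}^{n-1})/\Haus{n-m-1}(\mathbf{S}^{n-m-1}) = \Haus{m}(\mathbf{S}^m)$ by the standard sphere-volume recursion, giving $\Haus{m}(\Gamma) \ge \Haus{m}(\mathbf{S}^m)$. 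This is the clean form of the quantitative estimate; combining the Coarea formula \ref{area formula for the gauss map}, the sign of $Q_\Gamma$ on $C$ (elementary) and $\trace Q_\Gamma \le m$ (from \ref{Lusin property for (m,h) sets}) is the whole content of the inequality.

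For the equality case, suppose $\Haus{m}(\Gamma) = \Haus{m}(\mathbf{S}^m)$ and $\Gamma = \spt(\Haus{m}\restrict\Gamma)$. Equality forces $\Haus{0}(C(\eta)) = 1$ for $\Haus{n-1}$-a.e.\ $\eta$, $\Haus{n-m-1}(N(\Gamma,z)) = \Haus{n-m-1}(\mathbf{S}^{n-m-1})$ for $\Haus{m}$-a.e.\ $z$, and $|\discr Q_\Gamma| = 1$ with all $\kappa_{\Gamma,i}(z,\eta) = -1$ on $C$ (AM–GM equality). The plan here follows the paragraph sketched in the introduction. First, combining \cite[3.2]{MR3466806} with the Strong Barrier principle \cite[7.3]{MR3466806} one shows that at each $z \in \Gamma$ the tangent cone $\Tan(\Gamma,z)$ is an $m$-plane equal to the unique supporting hyperplane direction of the convex hull $\mathrm{conv}(\Gamma)$ through $z$; hence $\Gamma = \partial\,\mathrm{conv}(\Gamma)$ and $\Gamma$ is a $\mathcal{C}^1$ hypersurface — so necessarily $m = n-1$ and the normal fibers $N(\Gamma,z)$ are single points, consistent with $\Haus{n-m-1}(\mathbf{S}^{n-m-1}) = \Haus{0}(\mathbf{S}^0) = 2$. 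Then, using the barrier principle \cite[7.1]{MR3466806} together with Federer's \cite[3.1.23]{MR0257325}, one upgrades $\Gamma$ to a $\mathcal{C}^{1,1}$ hypersurface: at each point, the two-sided comparison with spheres of radius $1$ (from $\trace Q_\Gamma \le m = n-1$ on both the contact set and its antipodal copy, exploiting that $\Gamma$ bounds a convex body) gives a Lipschitz bound on the Gauss map. Finally, on a $\mathcal{C}^{1,1}$ closed hypersurface bounding a convex body with all principal curvatures equal to $1$ a.e., a direct ODE/integration argument — each normal line leaves $\Gamma$ after arclength exactly accounting for a unit sphere — forces $\Gamma$ to be a round sphere $\partial\mathbf{B}(a,1)$; restricting to the affine span gives the $(m+1)$-plane $T$ in the statement. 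I expect the main obstacle to be the $\mathcal{C}^1 \to \mathcal{C}^{1,1}$ promotion: without Allard regularity available for $(m,h)$ sets, one must extract the Lipschitz estimate on the normal map purely from two-sided barrier comparisons and the Federer local-graph lemma, and checking that the $(m,h)$ hypothesis indeed yields the sphere-from-below barrier at every point (not just a.e.) requires care with the viscosity definition and the contact structure.
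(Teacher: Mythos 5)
Your skeleton (contact set $C$, the coarea formula \ref{area formula for the gauss map}, the trace bound from \ref{Lusin property for (m,h) sets}, the sign of the second fundamental form at touching directions, AM--GM) is the same as the paper's, but the quantitative step that produces the sharp constant is wrong. After (correctly, modulo a sign: with your orientation of $C$ the form is $\geq 0$ at contact directions, not $\leq 0$) arriving at the pointwise bound $|\discr Q_{\Gamma}|\leq 1$ on the contact part of $N(\Gamma)$, you bound the fiber integral by $\Haus{n-m-1}(\mathbf{S}^{n-m-1})$ and invoke the ``standard sphere-volume recursion'' $\Haus{n-1}(\mathbf{S}^{n-1})/\Haus{n-m-1}(\mathbf{S}^{n-m-1})=\Haus{m}(\mathbf{S}^{m})$. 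That identity is false: for $n=3$, $m=1$ the left side is $4\pi/2\pi=2$ while $\Haus{1}(\mathbf{S}^{1})=2\pi$; so your argument only yields $\Haus{1}(\Gamma)\geq 2$, not $2\pi$. The point you are missing is that the discriminant bound must retain its dependence on the direction $\eta$: since $\trace Q_{\Gamma}(z,\eta)$ is the projection of a single vector onto $\eta$, AM--GM gives $\discr(\ap\mathbf{b}_{\Gamma^{(m)}}(z)\bullet\eta)\leq m^{-m}(\ap\mathbf{h}_{\Gamma^{(m)}}(z)\bullet\eta)^{m}\leq m^{-m}(g(z)\bullet\eta)^{m}$, where $g(z)$ is the nearest point projection of $\ap\mathbf{h}_{\Gamma^{(m)}}(z)$ onto the cone $C(z)$ and $|g(z)|\leq m$ by the barrier-type estimate. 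Integrating the weight $(\eta\bullet g(z)/|g(z)|)^{m}$ over the half-sphere $D(\ap\Tan(\Gamma^{(m)},z),g(z))\cap\mathbf{S}^{n-1}$ produces the constant $\gamma(n,m)$, and it is the identity $\gamma(n,m)\,\Haus{m}(\mathbf{S}^{m})=\Haus{n-1}(\mathbf{S}^{n-1})$ (verified by running the chain of inequalities on a round $m$-sphere in an $(m+1)$-plane, where all of them are equalities) that yields the sharp constant. Discarding the $(\cos\theta)^{m}$ decay, as you do, loses exactly this factor and the theorem is not proved.

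The equality case is also not salvageable from your inequality: since your chain is not sharp, equality in the statement does not force equality in your estimates, so you cannot extract the rigidity conditions ($|g|=m$, $\ap\mathbf{b}_{\Gamma}(u,v)=m^{-1}(u\bullet v)\ap\mathbf{h}_{\Gamma}$, $C(z)\cap\mathbf{S}^{n-1}=D(\cdot)\cap\mathbf{S}^{n-1}$, $\Haus{m}(\Gamma\sim\Gamma^{(m)})=0$) on which the rigidity argument rests. Two further points: the claim that $\Gamma$ being a $\mathcal{C}^{1}$ hypersurface forces $m=n-1$ is backwards --- one first shows, using $|\ap\mathbf{h}_{\Gamma}|=m\neq 0$ to exclude $\dim\mathrm{conv}(\Gamma)=m$, that the convex hull is $(m+1)$-dimensional, and only then restricts to its affine span to assume $m=n-1$ without loss of generality; and your final ``ODE along normal lines'' step is vague, whereas the paper's route is concrete: the curvature bounds $0\leq\chi_{A,i}\leq m$ on $S(A,r)$ plus \ref{sufficient conditions for bilipschitzian homeomorphisms} make the Gauss map $\bm{\nu}_{A}\circ\bm{\xi}_{A}|S(A,r)^{-1}$ locally Lipschitz, and then the a.e.\ identity $\Der\eta(x)(u)\bullet v=u\bullet v$ (from the equality conditions) integrates to $\eta(z)=z-a$, i.e.\ $\Gamma=\partial\mathbf{B}(a,1)$. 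You flag the $\mathcal{C}^{1}\to\mathcal{C}^{1,1}$ promotion as an unresolved obstacle; it is handled exactly by this two-sided barrier estimate on the level sets $S(A,r)$ together with \cite[3.1.23]{MR0257325}, but the central gap in your proposal remains the non-sharp constant in the inequality.
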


\begin{proof}
We assume $ \Haus{m}(\Gamma) < \infty $. Since $ \lambda \Gamma $ is an $(m, h/\lambda) $ set whenever $ \lambda > 0 $, we reduce the proof to the case $ h = m $. 

We define
\begin{equation*}
	C = (\Gamma \times \Real{n}) \cap \{ (a, \eta) : \eta \bullet (w-a) \geq 0 \; \textrm{for every $ w \in \Gamma $}  \}
\end{equation*}
and we notice that $ C $ is a closed subset of $\Gamma \times \Real{n}$, $ C(a) $ is a closed convex cone\footnote{A subset $ C $ of $ \Real{n} $ is a cone if and only if $ \lambda c \in C $ whenever $ 0 < \lambda < \infty $ and $ c \in C $.} containing $ 0 $ for every $ a \in \Gamma $ and, since $ \Gamma $ is compact, for every $ \eta \in \mathbf{S}^{n-1} $ there exists $ a \in \Gamma $ such that $ \inf_{w \in \Gamma}(w \bullet \eta) = ( a \bullet \eta) $; in other words, 
\begin{equation*}
	\mathbf{q}(C \cap (\Gamma \times \mathbf{S}^{n-1})) = \mathbf{S}^{n-1}.
\end{equation*}
Moreover we let $ B = \{ (a,-\eta) : (a,\eta)\in C, \; |\eta| = 1  \} $ and we notice that 
\begin{equation}\label{Area mean curvature characterization: eq 6}
	B \subseteq N(\Gamma) \quad \textrm{and} \quad \mathbf{q}[B] =\mathbf{S}^{n-1}.
\end{equation}

We define $ X \subseteq \Gamma^{(m)} $ as the set of $ a \in \Gamma^{(m)} $ such that the following conditions are satisfied:
\begin{itemize}
\item[(i)] $ \Gamma^{(m)} $ is approximately differentiable of order $ 2 $ at $ a $ with 
\begin{equation*}
\ap\Tan(\Gamma^{(m)},a)\in \mathbf{G}(n,m),
\end{equation*}
\item[(ii)] $ \ap \mathbf{b}_{\Gamma^{(m)}}(a)\bullet \eta = -Q_{\Gamma}(a,\eta) $ for $ \Haus{n-m-1} $ a.e.\ $ \eta \in N(\Gamma,a) $, 
\item[(iii)] $ \trace Q_{\Gamma}(a,\eta) \leq m $ for $ \Haus{n-m-1} $ a.e.\ $ \eta \in N(\Gamma,a) $.
\end{itemize}
Since $ \Gamma^{(m)} $ is $ \rect{m} $ rectifiable of class $ 2 $ by \cite[4.12]{MR4012808}, it follows from \cite[3.23]{San} that condition (i) is satisfied $ \Haus{m} $ almost everywhere on $ \Gamma^{(m)} $. Moreover, noting \ref{Lusin property for (m,h) sets} and \ref{relation with ap s.f.f.}, we infer that
\begin{equation}\label{Area mean curvature characterization: eq 13}
\ap \mathbf{b}_{\Gamma^{(m)}}(a)\bullet \eta = -Q_{\Gamma}(a,\eta) \quad \textrm{and} \quad \trace Q_{\Gamma}(a,\eta) \leq m
\end{equation}
for $ \Haus{n-1} $ a.e.\ $ (a,\eta) \in N(\Gamma) $ and we apply \cite[2.10.25]{MR0257325}, with $ X $ and $ f $ replaced by $ N(\Gamma)| \Gamma^{(m)} $ and $\mathbf{p}$, to conclude that \eqref{Area mean curvature characterization: eq 13} holds for $ \Haus{m} $ a.e.\ $ a \in \Gamma^{(m)} $ and for $ \Haus{n-m-1} $ a.e.\ $ \eta \in N(\Gamma,a) $. Henceforth, 
\begin{equation}\label{Area mean curvature characterization: eq 8}
	\Haus{m}(\Gamma^{(m)} \sim X) =0.
\end{equation}
Furthermore we notice that if $ a \in X $ then
\begin{equation*}
\ap \mathbf{h}_{\Gamma^{(m)}}(a)\bullet \eta \geq -m \quad \textrm{for $ \Haus{n-m-1} $ a.e.\ $ \eta \in N(\Gamma,a) $,}
\end{equation*}
whence we readily infer from \ref{curvature of arbitrary closed sets 4} that
\begin{equation}\label{Area mean curvature characterization: eq 7}
\ap \mathbf{h}_{\Gamma^{(m)}}(a)\bullet \eta \geq -m \quad \textrm{for all $ \eta \in N(\Gamma,a) $.}
\end{equation}

If $ a \in X $ we define 
\begin{equation*}
g(a) = \bm{\xi}_{C(a)}(\ap\mathbf{h}_{\Gamma^{(m)}}(a))
\end{equation*}
(notice $C(a)\subseteq \ap\Nor(\Gamma^{(m)},a) \in \mathbf{G}(n, n-m) $ and $ \ap\mathbf{h}_{\Gamma^{(m)}}(a) \in  \ap\Nor(\Gamma^{(m)},a) $), we infer from \cite[3.9(3)]{MR4012808} that $	\ap\mathbf{h}_{\Gamma^{(m)}}(a)-g(a) \in \Nor(C(a), g(a)) $ and
\begin{equation*}
(\ap\mathbf{h}_{\Gamma^{(m)}}(a)-g(a)) \bullet (\eta-g(a)) \leq 0 \quad \textrm{for every $ \eta \in C(a) $}
\end{equation*}
and, noting that $ 0 \in C(a) $ and $ 2g(a) \in C(a) $, we conclude that
\begin{equation*}
(\ap\mathbf{h}_{\Gamma^{(m)}}(a)-g(a)) \bullet g(a) =0,
\end{equation*}
\begin{equation*}
(\ap\mathbf{h}_{\Gamma^{(m)}}(a)-g(a)) \bullet \eta \leq 0 \quad \textrm{for every $ \eta \in C(a) $.}
\end{equation*}
Since $ -g(a) /|g(a)| \in N(\Gamma, a) $ when $ g(a) \neq 0 $ by \eqref{Area mean curvature characterization: eq 6}, we obtain from \eqref{Area mean curvature characterization: eq 7} that
\begin{equation}\label{Area mean curvature characterization: eq 2}
|g(a)| = \ap\mathbf{h}_{\Gamma^{(m)}}(a) \bullet (g(a)/|g(a)|) \leq m.
\end{equation}
Moreover it follows from \cite[4.12(3)]{San},
\begin{equation*}
\ap\mathbf{b}_{\Gamma^{(m)}}(a)\bullet \eta \geq 0 \quad \textrm{for $ a \in X $ and $ \eta \in C(a) $},
\end{equation*}
whence we deduce 
\begin{equation}\label{Area mean curvature characterization: eq 9}
	g(a)\bullet \eta \geq \ap \mathbf{h}_{\Gamma^{(m)}}(a) \bullet \eta \geq 0\quad \textrm{for $ a \in X $ and $ \eta \in C(a) $},
\end{equation}
and, employing the classical inequality relating the arithmetic and geometric means of a family of non negative numbers\footnote{If $ a_{1}, \ldots , a_{m} $ are non negative real numbers,
	\begin{equation*}
	a_{1}a_{2}\ldots a_{m} \leq \Big(\frac{a_{1}+a_{2}+\ldots +a_{m}}{m}\Big)^{m}
	\end{equation*}
	with equality only if $ a_{1}=a_{2}= \ldots = a_{m} $.},
\begin{equation}\label{Area mean curvature characterization: eq 5}
0 \leq \discr (\ap\mathbf{b}_{\Gamma^{(m)}}(a)\bullet \eta) \leq m^{-m} (\ap\mathbf{h}_{\Gamma^{(m)}}(a)\bullet \eta)^{m} \leq  m^{-m}(g(a)\bullet \eta)^{m}
\end{equation}
for every $ a \in X $ and $ \eta \in C(a) $.

If $ T \in \mathbf{G}(n,m) $ and $ v \in T^{\perp} $ we define
\begin{equation*}
	D(T,v) = T^{\perp} \cap \{ u : u \bullet v \geq 0   \}.
\end{equation*}
We readily infer that there exists $ 0 < \gamma(n,m) < \infty $ such that 
\begin{equation*}
	\gamma(n,m)|v|^{m} = \int_{D(T,v)\cap \mathbf{S}^{n-1}}(\eta \bullet v)^{m}d\Haus{n-m-1}\eta
\end{equation*}
for every $ T \in \mathbf{G}(n,m) $ and $ v \in T^{\perp} $. It follows from \eqref{Area mean curvature characterization: eq 9},
\begin{equation}\label{Area mean curvature characterization: eq 10}
C(a) \subseteq D(\ap\Tan(\Gamma^{(m)},a),g(a)) \quad \textrm{for every $ a \in X $};
\end{equation}
noting \eqref{Area mean curvature characterization: eq 6}, \eqref{Area mean curvature characterization: eq 8}, \eqref{Area mean curvature characterization: eq 5} and \eqref{Area mean curvature characterization: eq 2}, we apply Coarea formula \ref{area formula for the gauss map} to estimate
\begin{flalign*}
& \Haus{n-1}(\mathbf{S}^{n-1}) \\
&\quad \overset{(I)}{\leq}\int_{\mathbf{S}^{n-1}} \Haus{0}\{ a : (a,\eta) \in B  \} d\Haus{n-1}\eta \\
&\quad = \int_{\Gamma}\int_{B(a)}|\discr Q_{\Gamma}(a,\eta)| d\Haus{n-m-1}\eta\, d\Haus{m}a \\
& \quad = \int_{\Gamma^{(m)}}\int_{C(a)\cap \mathbf{S}^{n-1}}\discr (\ap\mathbf{b}_{\Gamma^{(m)}}(a)\bullet \eta)d\Haus{n-m-1}\eta\, d\Haus{m}a \\
&\quad  \overset{(II)}{\leq} m^{-m}\int_{\Gamma^{(m)}}\int_{C(a)\cap \mathbf{S}^{n-1}}(g(a)\bullet \eta)^{m}d\Haus{n-m-1}\eta\, d\Haus{m}a\\
&\quad  \overset{(III)}{\leq} m^{-m}\int_{\Gamma^{(m)}}\int_{D(\ap\Tan(\Gamma^{(m)},a),g(a))\cap \mathbf{S}^{n-1}}(g(a)\bullet \eta)^{m}d\Haus{n-m-1}\eta\, d\Haus{m}a \\
& \quad = m^{-m}\gamma(n,m)\int_{\Gamma^{(m)}}|g(a)|^{m}d\Haus{m}a \\
& \quad \overset{(IV)}{\leq} \gamma(n,m)\Haus{m}(\Gamma^{(m)}) \\
&  \quad \overset{(V)}{\leq} \gamma(n,m)\Haus{m}(\Gamma).
\end{flalign*}

Suppose $ T \in \mathbf{G}(n,m+1) $ and $ \Sigma = T \cap \mathbf{S}^{n-1} $. We observe that if $ a \in \Sigma $ then 
\begin{equation*}
	D(\Tan(\Sigma,a),-a) = \{ \eta : \eta \bullet (w-a) \geq 0 \; \textrm{for every $ w \in \Sigma $}  \},
\end{equation*}
\begin{equation*}
\mathbf{b}_{\Sigma}(a)(u,v)= -a(u \bullet v) \quad \textrm{for $ u,v \in \Tan(\Sigma,a) $,} \quad \mathbf{h}_{\Sigma}(a) = -ma;
\end{equation*}
moreover $ \Sigma = \Sigma^{(m)} $ and 
 \begin{equation*}
 \Haus{0}\{ a: \eta \in D(\Tan(\Sigma,a), -a)    \} = 1 \quad \textrm{for every $ \eta \in \mathbf{S}^{n-1} \sim T^{\perp} $.}
 \end{equation*}
Then we infer that inequalities (I)-(V) in the previous estimate are actually equalities when $ \Gamma = \Sigma $, and we conclude that
\begin{equation*}
\Haus{n-1}(\mathbf{S}^{n-1}) = \gamma(n,m)\Haus{m}(\Sigma) = \gamma(n,m)\Haus{m}(\mathbf{S}^{m}).
\end{equation*}
From this equation we finally obtain that
\begin{equation*}
\Haus{m}(\Gamma) \geq \Haus{m}(\mathbf{S}^{m})
\end{equation*}
and the proof of the first part of the theorem is concluded.

We now assume $ \Haus{m}(\Gamma)= \Haus{m}(\mathbf{S}^{m}) $ and $ \spt (\Haus{m}\restrict \Gamma) = \Gamma $ and we prove that $ \Gamma = f(\mathbf{S}^{m}) $ for some isometric injection $ f : \Real{m+1} \to \Real{n} $.

Firstly, noting that inequalities (I)-(V) are equalities, we infer that
\begin{equation*}
	\Haus{m}(\Gamma \sim \Gamma^{(m)}) =0 \quad \textrm{(by (V))}
\end{equation*}
and the following equalities hold for $ \Haus{m} $ a.e.\ $ a \in \Gamma $,
\begin{equation}\label{Area mean curvature characterization: eq 11}
|g(a)| = m  \quad \textrm{(by (IV) and \eqref{Area mean curvature characterization: eq 2})}, \quad \dim \ap \Tan(\Gamma,a) = m, 
\end{equation}
\begin{equation}\label{Area mean curvature characterization: eq 12}
D(\ap\Tan(\Gamma,a),g(a)) \cap \mathbf{S}^{n-1}=C(a) \cap \mathbf{S}^{n-1} \quad \textrm{(by (III) and \eqref{Area mean curvature characterization: eq 10})},
\end{equation}
\begin{equation}\label{Area mean curvature characterization: eq 3}
 g(a) = \ap \mathbf{h}_{\Gamma}(a), \quad |\ap \mathbf{h}_{\Gamma}(a)| = m, 
\end{equation}
\begin{equation*}
\discr (\ap\mathbf{b}_{\Gamma}(a)\bullet \eta) = m^{-m}(\ap \mathbf{h}_{\Gamma}(a) \bullet \eta)^{m} \quad \textrm{for $ \eta \in C(a) $} \quad \textrm{(by (II) and \eqref{Area mean curvature characterization: eq 5})},
\end{equation*}
\begin{equation}\label{Area mean curvature characterization: eq 4}
	\ap\mathbf{b}_{\Gamma}(a)(u,v) = m^{-1}(u \bullet v) \ap \mathbf{h}_{\Gamma}(a)\quad \textrm{for $ u,v \in \ap \Tan(\Gamma,a) $.}
\end{equation}

Let $ A $ be the convex hull of $ \Gamma $ and let $ B $ be the relative boundary of $ A $. Note that $ A -a \subseteq \{v : v \bullet \eta \geq 0\} $ for every $ a \in \Gamma $ and $ \eta \in C(a) $. Then it follows from \eqref{Area mean curvature characterization: eq 11} and \eqref{Area mean curvature characterization: eq 12} that
\begin{equation*}
\dim \{ u + \lambda g(a): u \in \ap\Tan(\Gamma,a), \; \lambda \geq 0   \} = m+1,
\end{equation*}
\begin{equation*}
A -a \subseteq \{ u + \lambda g(a): u \in \ap\Tan(\Gamma,a), \; \lambda \geq 0   \}
\end{equation*}
for $ \Haus{m} $ a.e.\ $ a \in \Gamma $, whence we deduce that $\dim A \leq m+1$. If $ \dim A = m $ then we could apply \cite[2.9]{Santilli2019} with $ M $ replaced by the relative interior of $ A $ to infer that $ \ap \mathbf{h}_{\Gamma}(x) =0 $ for $ \Haus{m} $ a..e\ $ x \in \Gamma $. Since this contradicts \eqref{Area mean curvature characterization: eq 3} we have proved that $ \dim A = m+1 $. Then we notice that $ \Haus{m}(\Gamma \sim B) = 0 $ and, since $ \Gamma = \spt (\Haus{m}\restrict \Gamma) $, it follows that $ \Gamma \subseteq B $.

At this point it is not restrictive to assume $ m = n-1 $ in the sequel. 

Now we prove that \emph{if $ x \in \Gamma $ then $ \Tan(\Gamma,x) $ is the unique supporting hyperplane of $ A $ at $ x $.} We fix $ x \in \Gamma $. By \cite[1.3.2]{MR3155183} there exists a closed halfspace $ H $ of $ \Real{m+1} $ such that $ 0 \in \partial H $ and $ A -x \subseteq H $. By \cite[Theorem 1.1.7]{MR2458436} we choose a sequence $ \lambda_{i} $ converging to $ +\infty $ and a closed set $ Z $ in $ \Real{m+1} $ such that (see \cite[1.1.1]{MR2458436})
\begin{equation*}
\lambda_{i}(\Gamma -x) \to Z \quad \textrm{as $ i \to \infty $ in the sense of Kuratowski.}
\end{equation*}
Then we notice that $ 0 \in  Z \subseteq H $, $ Z $ is an $ (m,0) $ subset of $ \Real{m+1} $ by \cite[1.6, 3.2]{MR3466806} and $ \partial H \subseteq Z $ by \cite[7.3]{MR3466806}. Henceforth we have the following inclusions
\begin{equation*}
\partial H \subseteq	Z \subseteq \Tan(\Gamma,x) \subseteq \Tan(\partial A,x) \subseteq \Tan(A,x) \subseteq H
\end{equation*}
and one may infer from \cite[5.7]{MR3281655} that $ \Tan(A,x) = H $ and $ \Tan(\Gamma,x) = \partial H $.

Next we check that
\begin{equation*}
\partial A = \Gamma.
\end{equation*}
Let $ x \in \Gamma $. Then there exist an $ m $ dimensional plane $ T $, an open neighborhood $ W $ of $ x $ and a convex Lipschitzian function $ f : U \rightarrow T^{\perp} $ defined on a relatively open convex subset $ U $ of $ T $ containing $ T_{\natural}(x) $, such that
\begin{equation*}
W \cap \partial A = \{  \chi + f(\chi) : \chi \in U \}.
\end{equation*}
Since $ \Lip f < \infty $ it follows that $ \Tan(\partial A, y) \cap T^{\perp} = \{0\} $ for $ y \in W \cap \partial A $ and, since we have proved in the previous paragraph that $ \Tan(\Gamma,y) $ is an $ m $ dimensional plane for every $ y \in \Gamma $, we employ \cite[first paragraph p.\ 234]{MR0257325} to conclude
\begin{equation*}
T =	T_{\natural}(\Tan(\Gamma,y)) = \Tan(T_{\natural}(W \cap \Gamma), T_{\natural}(y)) \quad \textrm{for every $ y \in W \cap \Gamma $}.
\end{equation*}
Noting that $ T_{\natural}(W \cap \Gamma) $ is relatively closed in $ U $, we infer\footnote{\emph{Suppose $ C \subseteq U\subseteq \Real{n} $, $ U $ is open and $ C $ is relatively closed in $ U $. If $\Tan(C,x)= \Real{n} $ for every $ x \in C $ then $ C = U $.} In fact, if there was $ y \in U \sim C $ and if $ t = \sup\{s : \mathbf{U}(y,s)\cap C = \varnothing \} $ then $ t > 0 $, $ \mathbf{U}(y,t)\cap C = \varnothing $, $ \mathbf{B}(y,t) \cap C \neq \varnothing $ and $ y-x \in \Nor(C,x) $ for every $ x \in  \mathbf{B}(y,t) \cap C $. This is clearly a contradiction.} that $ T_{\natural}(W \cap \Gamma) = U $ and $ W \cap \partial A = W \cap \Gamma $. Since $ x $ is arbitrarily chosen in $ \Gamma $, it follows that $ \partial A = \Gamma $.

We combine the assertions of the previous two paragraphs with \cite[2.2.4]{MR3155183} to conclude that \emph{$\partial A $ is an $ m $ dimensional submanifold of class $ 1 $ in $ \Real{m+1} $.} Moreover, it is well known that $ \dmn \bm{\xi}_{A} = \Real{m+1} $, $ \Lip \bm{\xi}_{A}\leq 1 $ (see \cite[1.2]{MR3155183}) and $ \{ x : \bm{\delta}_{A}(x) < r  \} $ is an open convex set whose boundary $S(A,r)$ is an $ m $ dimensional submanifold of class $ \mathcal{C}^{1,1} $ for $ r > 0 $ (see \cite[4.8]{MR0110078}). Let $ 0 < r < m^{-1} $ and $ \xi = \bm{\xi}_{A}|S(A,r) $. For $ \Haus{m} $ a.e.\ $ x \in S(A,r) $ we apply the barrier principle \ref{weak maximum principle}, with $ T $, $ \eta $ and $ f $ replaced by $ \{ v : v \bullet \bm{\nu}_{A}(x)=0  \} $, $ \bm{\nu}_{A}(x) $ and a concave function whose graph corresponds to $S(A,r)$ in a neighborhood of $ x $, to infer (see \ref{curvature of arbitrary closed sets 3}) that
\begin{equation*}
\chi_{A,i}(x) \geq 0 \quad \textrm{for $ i = 1, \ldots , m $}, \qquad \sum_{i=1}^{m}	\chi_{A,i}(x) \leq m,
\end{equation*}
and we combine these inequalities to conclude that $\chi_{A,i}(x) \leq m $ for $ i = 1, \ldots , m $. Therefore $ \| \Der(\xi-\mathbf{1}_{S(A,r)})(x)\| \leq mr < 1 $ for $ \Haus{m} $ a.e.\ $ x \in S(A,r) $ and, noting that $ \xi $ is univalent by \cite[4.8(12)]{MR0110078}, we apply \ref{sufficient conditions for bilipschitzian homeomorphisms} to conclude that the function $ \xi^{-1} : \partial A \rightarrow S(A,r) $ is a locally Lipschtzian map and the unit normal vector field on $ \partial A $,
\begin{equation*}
\eta = \bm{\nu}_{A} \circ \xi^{-1},
\end{equation*}
is locally Lipschitzian. Combining \cite[3.25]{San} with \eqref{Area mean curvature characterization: eq 3} and \eqref{Area mean curvature characterization: eq 4}, we infer for $ \Haus{m} $ a.e.\ $ x \in \Gamma $ and for $ u,v \in \Tan(\Gamma, x) $ that
\begin{flalign*}
\Der \eta(x)(u)\bullet v & = -\ap \mathbf{b}_{\Gamma}(x)(u,v)\bullet \eta(x) \\
& = - m^{-1}(\ap \mathbf{h}_{\Gamma}(x)\bullet \eta(x))(u \bullet v) \\
& = u \bullet v,
\end{flalign*}
whence we conclude that $ \Der (\eta - \mathbf{1}_{\Gamma})(x) =0 $ for $ \Haus{m} $ a.e.\ $ x \in \Gamma $. Therefore there exists $ a \in \Real{m+1} $ such that
\begin{equation*}
	\eta(z) = z-a \quad \textrm{for every $ z \in \Gamma $}
\end{equation*}
and, since $ |\eta(z)| = 1 $ for $ z \in \Gamma $, we conclude that
\begin{equation*}
	\Gamma = \partial \mathbf{B}(a,1).
\end{equation*}
\end{proof}

\begin{Remark}\label{area mean curvature ch remark}
If $ V $ is a varifold as in \cite[Theorem 1]{MR855173} and if we additionally assume that $ V $ is \emph{integral} then Brakke perpendicularity theorem \cite[5.8]{MR485012} implies that $ |\mathbf{h}(V,x)|\leq m $, whence we deduce by \cite[2.8]{MR3466806} that $ \spt \|V\| $ is an $(m,m)$ subset of $ \Real{n} $. 
\end{Remark}

Theorem \ref{area mean curvature ch} readily provides a sufficient condition to conclude that the area-blow up set is empty for certain sequences of $ m $ dimensional varifolds whose mean curvature is uniformly bounded outside a set that is not too large.

\begin{Corollary}\label{area-blow-up set}
	Let $ V_{i} $ be a sequence of $ m $ dimensional varifolds in $ \Real{n} $ whose total variation $ \|\delta V_{i}\| $ is a Radon measure and such that the following three conditions hold for some $ 0 < h < \infty $:
	\begin{enumerate}
		\item the generalized boundaries of $ V_{i} $ are uniformly bounded on compacts sets; i.e.\ if $ \mu_{i} $ is the singular part of $ \| \delta V_{i} \| $ with respect to $ \|V_{i}\| $ then
		\begin{equation*}
			\limsup_{i \to \infty}\mu_{i}(K) < \infty \quad \textrm{for every compact set $ K \subseteq \Real{n} $;}
		\end{equation*}
		\item there exists a compact set $ \Gamma $ such that $ \Haus{m}(\Gamma) < (m/h)^{m} \Haus{m}(\mathbf{S}^{m}) $ and
		\begin{equation*}
		\limsup_{i \to \infty}\|V_{i}\|(K) < \infty \quad \textrm{for every compact set $ K \subseteq \Real{n} \sim \Gamma $;}
		\end{equation*}
		\item\label{area-blow-up set:3} $ \limsup_{i \to \infty} \int_{K} (|\mathbf{h}(V_{i},z)| - h  )^{+}\, d\|V_{i}\|z < \infty $ whenever $ K \subseteq \Real{n} $ is compact, where $ t^{+} = \sup\{t,0\} $ for $ t \in \Real{} $.
		
		Then $ \limsup_{i \to \infty} \|V_{i}\|(K) < \infty $ for every compact set $ K \subseteq \Real{n} $.
	\end{enumerate}
\end{Corollary}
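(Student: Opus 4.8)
The plan is to show that the \emph{area blow-up set} $Z$ of the sequence --- the set of all $x \in \Real{n}$ such that $\limsup_{i\to\infty}\|V_{i}\|(\mathbf{B}(x,r)) = \infty$ for every $r>0$ --- is empty. I would first record that $Z = \varnothing$ is equivalent to the asserted conclusion: if $Z = \varnothing$, then each point of a given compact set $K$ has a ball on which $\limsup_{i}\|V_{i}\|$ is finite, and a finite subcover of $K$ gives $\limsup_{i}\|V_{i}\|(K) < \infty$, while conversely, if $x \in Z$, then $\limsup_{i}\|V_{i}\|(\mathbf{B}(x,1)) = \infty$. Moreover condition (2) forces $Z \subseteq \Gamma$: if $x \notin \Gamma$, then, $\Gamma$ being compact, some closed ball $\mathbf{B}(x,r)$ lies in the open set $\Real{n}\sim\Gamma$, so $\limsup_{i}\|V_{i}\|(\mathbf{B}(x,r))<\infty$ by (2) and $x\notin Z$. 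Hence $Z$ is a closed subset of the compact set $\Gamma$, in particular compact, and $\Haus{m}(Z) \leq \Haus{m}(\Gamma) < (m/h)^{m}\Haus{m}(\mathbf{S}^{m})$.

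Next I would reformulate conditions (1) and (3) so as to bring in the theory of $(m,h)$ sets. Writing $\|\delta V_{i}\| = |\mathbf{h}(V_{i},\cdot)|\,\|V_{i}\| + \mu_{i}$ for the decomposition of $\|\delta V_{i}\|$ into its absolutely continuous and singular parts with respect to $\|V_{i}\|$, and putting $\sigma_{i} = \mu_{i} + (|\mathbf{h}(V_{i},\cdot)|-h)^{+}\,\|V_{i}\|$, one has $\|\delta V_{i}\| \leq h\,\|V_{i}\| + \sigma_{i}$ as Radon measures, whereas $\limsup_{i}\sigma_{i}(K) < \infty$ for every compact $K$ by (1) and (3). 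This is exactly the kind of bound on the first variation under which White proved that area blow-up sets are $(m,h)$ sets, and I would invoke \cite{MR3466806} to conclude that $Z$ is an $(m,h)$ subset of $\Real{n}$. I would not reproduce the argument; its mechanism is the one also underlying \ref{weak maximum principle}: to verify the defining inequality of \ref{definition of (m,h)} at a point $x_{0}\in Z$ one argues by contradiction, testing the first variation of $V_{i}$ against a vector field of the form $\zeta\,\nabla f$ with $\zeta$ a suitable nonnegative cutoff near $x_{0}$, and using that the tangential divergence of $\nabla f$ along any $m$-dimensional plane is at least $\trace_{m}\Der^{2}f$; a violation $\trace_{m}\Der^{2}f(x_{0}) > h|\nabla f(x_{0})|$ at a local maximum of $f|Z$ then bounds the $\|V_{i}\|$-measure of a neighbourhood of $x_{0}$ in terms of $\sigma_{i}$, contradicting $x_{0}\in Z$.

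With these two facts the proof finishes quickly: if $Z\neq\varnothing$, then $Z$ is a non-empty compact $(m,h)$ subset of $\Real{n}$, so \ref{area mean curvature ch} gives $\Haus{m}(Z) \geq (m/h)^{m}\Haus{m}(\mathbf{S}^{m})$, contradicting the upper bound found in the first paragraph; therefore $Z = \varnothing$, which is the assertion to be proved. I expect the only genuine obstacle to be the $(m,h)$ property of $Z$ established in the second paragraph --- and since that is already available from \cite{MR3466806}, everything else is a short deduction from the sharp inequality \ref{area mean curvature ch}.
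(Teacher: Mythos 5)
Your proposal is correct and follows essentially the same route as the paper: identify the area blow-up set $Z$, note that conditions (1) and (3) place the sequence under the hypotheses of White's area blow-up theorem \cite[2.6]{MR3466806} so that $Z$ is an $(m,h)$ set, observe that (2) forces $Z$ to be a compact subset of $\Gamma$ with $\Haus{m}(Z)<(m/h)^{m}\Haus{m}(\mathbf{S}^{m})$, and conclude $Z=\varnothing$ from Theorem \ref{area mean curvature ch}. The only difference is that you spell out the covering argument and the decomposition of $\|\delta V_{i}\|$ explicitly, which the paper leaves implicit in its two-line proof.
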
 

\begin{proof}
If $ Z = \{ x :  \limsup_{i \to \infty} \|V_{i}\|(\mathbf{B}(x,r)) = \infty \; \textrm{for every $ r > 0 $}\} $ then $ Z $ is an $ (m,h) $ subset of $ \Real{n+1} $ by \cite[2.6]{MR3466806}. Since $ Z \subseteq \Gamma $ and $ Z $ is compact, it follows from \ref{area mean curvature ch} that $ Z = \varnothing $.
\end{proof}

Here is the limit-case $ h =0 $.

\begin{Corollary}\label{area-blow-up set stationary}
	Suppose $ V_{i} $ is a sequence of $ m $ dimensional varifolds in $ \Real{n} $ such that
	\begin{enumerate}
	\item $ \limsup_{i \to \infty}\| \delta V_{i}\|(K) < \infty $ whenever $ K \subseteq \Real{n} $ is compact,
	\item there exists a compact set $ \Gamma \subseteq \Real{n} $ such that $ \Haus{m}(\Gamma) < \infty $ and 
	\begin{equation*}
	\limsup_{i \to \infty}\|V_{i}\|(K) < \infty \quad \textrm{for every compact set $ K \subseteq \Real{n} \sim \Gamma $.}
	\end{equation*}
	\end{enumerate}
	
Then $ \limsup_{i \to \infty} \|V_{i}\|(K) < \infty $ for every compact set $ K \subseteq \Real{n} $.
\end{Corollary}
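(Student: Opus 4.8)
The plan is to imitate the proof of Corollary~\ref{area-blow-up set}: I would show that the area-blow-up set $Z = \Real{n}\cap\{x : \limsup_{i\to\infty}\|V_{i}\|(\mathbf{B}(x,r))=\infty \text{ for every } r>0\}$ is empty, from which the conclusion follows by a covering argument. First I would record two elementary facts about $Z$. Since the set of $x$ admitting some $r>0$ with $\limsup_{i}\|V_{i}\|(\mathbf{B}(x,r))<\infty$ is open, $Z$ is relatively closed; and since, by hypothesis~(2), every point of the open set $\Real{n}\sim\Gamma$ has a compact neighbourhood contained in $\Real{n}\sim\Gamma$ on which $\limsup_{i}\|V_{i}\|$ is finite, we have $Z\subseteq\Gamma$. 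Hence $Z$ is compact and $\Haus{m}(Z)\leq\Haus{m}(\Gamma)<\infty$.

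Next I would show that $Z$ is an $(m,0)$ subset of $\Real{n}$. Writing $\|\delta V_{i}\| = |\mathbf{h}(V_{i},\cdot)|\,\|V_{i}\| + \mu_{i}$, where $\mu_{i}$ is the part of $\|\delta V_{i}\|$ singular with respect to $\|V_{i}\|$, hypothesis~(1) yields $\limsup_{i}\mu_{i}(K)<\infty$ and $\limsup_{i}\int_{K}|\mathbf{h}(V_{i},z)|\,d\|V_{i}\|z<\infty$ for every compact $K\subseteq\Real{n}$; these are precisely hypotheses~(1) and~(3) of Corollary~\ref{area-blow-up set} in the borderline case $h=0$, so \cite[2.6]{MR3466806} gives that $Z$ is an $(m,0)$ set.

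The crucial step is the following: if $Z$ were non-empty, then, since every $(m,0)$ set is trivially an $(m,h)$ set for each $h>0$ (as $\trace_{m}\Der^{2}f(x)\leq 0\leq h|\nabla f(x)|$), Theorem~\ref{area mean curvature ch} applied to the non-empty compact $(m,h)$ set $Z$ would give $\Haus{m}(Z)\geq (m/h)^{m}\Haus{m}(\mathbf{S}^{m})$ for every $h>0$; letting $h\downarrow 0$ forces $\Haus{m}(Z)=\infty$, contradicting $\Haus{m}(Z)\leq\Haus{m}(\Gamma)<\infty$. Therefore $Z=\varnothing$, i.e.\ every point of $\Real{n}$ lies in a closed ball on which $\limsup_{i}\|V_{i}\|$ is finite; covering a compact $K$ by finitely many such balls and using the subadditivity of $\limsup$ over finite sums then gives $\limsup_{i}\|V_{i}\|(K)<\infty$.

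The main obstacle is precisely this last reduction. One cannot apply Corollary~\ref{area-blow-up set} directly with $h=0$, nor with a small $h>0$, because the size constraint $\Haus{m}(\Gamma)<(m/h)^{m}\Haus{m}(\mathbf{S}^{m})$ required there degenerates as $h\to0$; the point to exploit is instead that Almgren's lower bound $(m/h)^{m}\Haus{m}(\mathbf{S}^{m})$ blows up, so that a non-empty compact $(m,0)$ set cannot have finite $\Haus{m}$ measure. All the remaining ingredients — the inclusion $Z\subseteq\Gamma$, the compactness of $Z$, the decomposition of the first variation, and the final covering argument — are routine.
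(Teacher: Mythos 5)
Your argument is mathematically sound, but the ``main obstacle'' you identify is illusory, and the paper's proof is precisely the step you declare impossible: it chooses $h>0$ so small that $\Haus{m}(\Gamma)<(m/h)^{m}\Haus{m}(\mathbf{S}^{m})$ and applies Corollary \ref{area-blow-up set} with that $h$. This is legitimate because the constraint becomes \emph{weaker}, not stronger, as $h\downarrow 0$: the threshold $(m/h)^{m}\Haus{m}(\mathbf{S}^{m})$ tends to $\infty$, while $\Haus{m}(\Gamma)<\infty$ by hypothesis. The remaining hypotheses of \ref{area-blow-up set} follow at once from your hypothesis (1), exactly as you yourself note, since $\mu_{i}(K)\leq\|\delta V_{i}\|(K)$ and $\int_{K}(|\mathbf{h}(V_{i},z)|-h)^{+}\,d\|V_{i}\|z\leq\int_{K}|\mathbf{h}(V_{i},z)|\,d\|V_{i}\|z\leq\|\delta V_{i}\|(K)$.

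What you do instead is unroll Corollary \ref{area-blow-up set}: you build the area-blow-up set $Z$ yourself, show $Z\subseteq\Gamma$ is compact, invoke \cite[2.6]{MR3466806} to see that $Z$ is an $(m,0)$ set (hence an $(m,h)$ set for every $h>0$), and then let $h\downarrow 0$ in Almgren's inequality \ref{area mean curvature ch} to contradict $\Haus{m}(Z)<\infty$. This is correct and rests on exactly the same two ingredients as the paper (White's \cite[2.6]{MR3466806} and Theorem \ref{area mean curvature ch}); your handling of the technical points ($Z$ closed, $Z\subseteq\Gamma$, the decomposition of $\|\delta V_{i}\|$, the passage from $(m,0)$ to $(m,h)$ sets so that \ref{area mean curvature ch}, stated for $h>0$, applies, and the final covering argument) is fine. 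The only real difference is economy: the paper's one-line reduction fixes a single small $h$ and quotes \ref{area-blow-up set}, whereas your version re-proves that corollary's content with a limiting argument in $h$; the extra generality you gain (``a non-empty compact $(m,0)$ set has infinite $\Haus{m}$ measure'') is already implicit in \ref{area mean curvature ch}. So the proof stands, but you should delete the claim that \ref{area-blow-up set} cannot be applied with small $h>0$ — it can, and that is the intended proof.
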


\begin{proof}
Choose $ h > 0 $ small so that $ \Haus{m}(\Gamma) < (m/h)^{m}\Haus{m}(\mathbf{S}^{m}) $ and apply \ref{area-blow-up set}.
\end{proof}

\begin{Remark}
The reader may find useful to compare \ref{area-blow-up set} and \ref{area-blow-up set stationary} with \cite[1.4]{MR3466806}.
\end{Remark}


\begin{thebibliography}{{San}19c}
	
	\bibitem[AF09]{MR2458436}
	Jean-Pierre Aubin and H\'{e}l\`ene Frankowska.
	\newblock {\em Set-valued analysis}.
	\newblock Modern Birkh\"{a}user Classics. Birkh\"{a}user Boston, Inc., Boston,
	MA, 2009.
	\newblock Reprint of the 1990 edition [MR1048347].
	
	\bibitem[All72]{MR0307015}
	William~K. Allard.
	\newblock On the first variation of a varifold.
	\newblock {\em Ann. of Math. (2)}, 95:417--491, 1972.
	
	\bibitem[Alm86]{MR855173}
	F.~Almgren.
	\newblock Optimal isoperimetric inequalities.
	\newblock {\em Indiana Univ. Math. J.}, 35(3):451--547, 1986.
	
	\bibitem[Bra78]{MR485012}
	Kenneth~A. Brakke.
	\newblock {\em The motion of a surface by its mean curvature}, volume~20 of
	{\em Mathematical Notes}.
	\newblock Princeton University Press, Princeton, N.J., 1978.
	
	\bibitem[CC93]{MR1190161}
	Luis~A. Caffarelli and Antonio C\'{o}rdoba.
	\newblock An elementary regularity theory of minimal surfaces.
	\newblock {\em Differential Integral Equations}, 6(1):1--13, 1993.
	
	\bibitem[Fed59]{MR0110078}
	Herbert Federer.
	\newblock Curvature measures.
	\newblock {\em Trans. Amer. Math. Soc.}, 93:418--491, 1959.
	
	\bibitem[Fed69]{MR0257325}
	Herbert Federer.
	\newblock {\em Geometric measure theory}.
	\newblock Die Grundlehren der mathematischen Wissenschaften, Band 153.
	Springer-Verlag New York Inc., New York, 1969.
	
	\bibitem[Fed78]{MR0467473}
	Herbert Federer.
	\newblock Colloquium lectures on geometric measure theory.
	\newblock {\em Bull. Amer. Math. Soc.}, 84(3):291--338, 1978.
	
	\bibitem[GH14]{MR3281655}
	Mohammad Ghomi and Ralph Howard.
	\newblock Tangent cones and regularity of real hypersurfaces.
	\newblock {\em J. Reine Angew. Math.}, 697:221--247, 2014.
	
	\bibitem[GP72]{MR0287442}
	Ronald Gariepy and W.~D. Pepe.
	\newblock On the level sets of a distance function in a {M}inkowski space.
	\newblock {\em Proc. Amer. Math. Soc.}, 31:255--259, 1972.
	
	\bibitem[HLW04]{MR2031455}
	Daniel Hug, G{\"u}nter Last, and Wolfgang Weil.
	\newblock A local {S}teiner-type formula for general closed sets and
	applications.
	\newblock {\em Math. Z.}, 246(1-2):237--272, 2004.
	
	\bibitem[Hut86]{MR825628}
	John~E. Hutchinson.
	\newblock Second fundamental form for varifolds and the existence of surfaces
	minimising curvature.
	\newblock {\em Indiana Univ. Math. J.}, 35(1):45--71, 1986.
	
	\bibitem[Man96]{MR1412686}
	Carlo Mantegazza.
	\newblock Curvature varifolds with boundary.
	\newblock {\em J. Differential Geom.}, 43(4):807--843, 1996.
	
	\bibitem[Men12]{Men12}
	Ulrich Menne.
	\newblock Almgren's area mean curvature characterization of the sphere.
	\newblock {\em Lecture notes. Potsdam University. Unpublished}, 2012.
	
	\bibitem[Men13]{zbMATH06157228}
	U.~Menne.
	\newblock {Second order rectifiability of integral varifolds of locally bounded
		first variation.}
	\newblock {\em {J. Geom. Anal.}}, 23(2):709--763, 2013.
	
	\bibitem[Men19]{MR3936235}
	Ulrich Menne.
	\newblock Pointwise differentiability of higher order for sets.
	\newblock {\em Ann. Global Anal. Geom.}, 55(3):591--621, 2019.
	
	\bibitem[MS19]{MR4012808}
	Ulrich Menne and Mario Santilli.
	\newblock A geometric second-order-rectifiable stratification for closed
	subsets of {E}uclidean space.
	\newblock {\em Ann. Sc. Norm. Super. Pisa Cl. Sci. (5)}, 19(3):1185--1198,
	2019.
	
	\bibitem[RZ12]{MR2954647}
	Jan Rataj and Lud\v{e}k Zaj\'{i}\v{c}ek.
	\newblock Critical values and level sets of distance functions in {R}iemannian,
	{A}lexandrov and {M}inkowski spaces.
	\newblock {\em Houston J. Math.}, 38(2):445--467, 2012.
	
	\bibitem[San19a]{Santilli2019}
	Mario Santilli.
	\newblock Fine properties of the curvature of arbitrary closed sets.
	\newblock {\em Annali di Matematica Pura ed Applicata (1923 -)}, Dec 2019.
	
	\bibitem[San19b]{San}
	Mario Santilli.
	\newblock Rectifiability and approximate differentiability of higher order for
	sets.
	\newblock {\em Indiana Univ. Math. J.}, 68:1013--1046, 2019.
	
	\bibitem[{San}19c]{2019arXiv190702792S}
	Mario {Santilli}.
	\newblock {Second order rectifiability of varifolds of bounded mean curvature}.
	\newblock {\em arXiv e-prints}, page arXiv:1907.02792, Jul 2019.
	
	\bibitem[Sav17]{MR3695374}
	O.~Savin.
	\newblock Some remarks on the classification of global solutions with
	asymptotically flat level sets.
	\newblock {\em Calc. Var. Partial Differential Equations}, 56(5):Art. 141, 21,
	2017.
	
	\bibitem[Sav18]{MR3823884}
	O.~Savin.
	\newblock Viscosity solutions and the minimal surface system.
	\newblock In {\em Nonlinear analysis in geometry and applied mathematics.
		{P}art 2}, volume~2 of {\em Harv. Univ. Cent. Math. Sci. Appl. Ser. Math.},
	pages 135--145. Int. Press, Somerville, MA, 2018.
	
	\bibitem[Sch04]{MR2064971}
	Reiner Sch{\"a}tzle.
	\newblock Quadratic tilt-excess decay and strong maximum principle for
	varifolds.
	\newblock {\em Ann. Sc. Norm. Super. Pisa Cl. Sci. (5)}, 3(1):171--231, 2004.
	
	\bibitem[Sch09]{MR2472179}
	Reiner Sch\"atzle.
	\newblock Lower semicontinuity of the {W}illmore functional for currents.
	\newblock {\em J. Differential Geom.}, 81(2):437--456, 2009.
	
	\bibitem[Sch14]{MR3155183}
	Rolf Schneider.
	\newblock {\em Convex bodies: the {B}runn-{M}inkowski theory}, volume 151 of
	{\em Encyclopedia of Mathematics and its Applications}.
	\newblock Cambridge University Press, Cambridge, expanded edition, 2014.
	
	\bibitem[Sch15]{MR3272763}
	Rolf Schneider.
	\newblock Curvatures of typical convex bodies---the complete picture.
	\newblock {\em Proc. Amer. Math. Soc.}, 143(1):387--393, 2015.
	
	\bibitem[Sta79]{MR534512}
	L.~L. Stach\'o.
	\newblock On curvature measures.
	\newblock {\em Acta Sci. Math. (Szeged)}, 41(1-2):191--207, 1979.
	
	\bibitem[Tru89]{MR995142}
	Neil~S. Trudinger.
	\newblock On the twice differentiability of viscosity solutions of nonlinear
	elliptic equations.
	\newblock {\em Bull. Austral. Math. Soc.}, 39(3):443--447, 1989.
	
	\bibitem[Whi16]{MR3466806}
	Brian White.
	\newblock Controlling area blow-up in minimal or bounded mean curvature
	varieties.
	\newblock {\em J. Differential Geom.}, 102(3):501--535, 2016.
	
\end{thebibliography}

\medskip 

\noindent Institut f\"ur Mathematik, Universit\"at Augsburg, \newline Universit\"atsstr.\ 14, 86159, Augsburg, Germany,
\newline mario.santilli@math.uni-augsburg.de

\end{document}